\theoremstyle{plain}
\newtheorem{theorem}{Theorem}[section]
\newtheorem{lemma}[theorem]{Lemma}
\newtheorem{proposition}[theorem]{Proposition}
\newtheorem{corollary}[theorem]{Corollary}
\theoremstyle{definition}
\newtheorem{definition}[theorem]{Definition}
\newtheorem{remark}[theorem]{Remark}
\DeclareMathOperator{\dv}{div}
\DeclareMathOperator{\Tr}{Tr}
\DeclareMathOperator{\Ric}{Ric}
\DeclareMathOperator{\sff}{\mathrm{I\!I}}
\newcommand{\gt}{\widetilde{g}}
\newcommand{\g}[1]{g_{#1}}
\newcommand{\textoverline}[1]{$\overline{\mbox{#1}}$}
\colorlet{darkgreen}{green!50!black}
\definecolor{darkgreen}{rgb}{0.0,0.5,0}
\begin{document}

\title{Finite element approximation of scalar curvature in arbitrary dimension}

\author{Evan S. Gawlik\thanks{Department of Mathematics, University of Hawai`i at M\textoverline{a}noa, Honolulu, HI, 96822, USA, \href{egawlik@hawaii.edu}{egawlik@hawaii.edu} } \and Michael Neunteufel\thanks{Institute for Analysis and Scientific Computing, TU Wien, Wiedner Hauptstr.~8-10, 1040 Wien, Austria, \href{michael.neunteufel@tuwien.ac.at}{michael.neunteufel@tuwien.ac.at}}}

\date{}

\maketitle

\begin{abstract}
We analyze finite element discretizations of scalar curvature in dimension $N \ge 2$.  Our analysis focuses on piecewise polynomial interpolants of a smooth Riemannian metric $g$ on a simplicial triangulation of a polyhedral domain $\Omega \subset \mathbb{R}^N$ having maximum element diameter $h$.  We show that if such an interpolant $g_h$ has polynomial degree $r \ge 0$ and possesses single-valued tangential-tangential components on codimension-1 simplices, then it admits a natural notion of (densitized) scalar curvature that converges in the $H^{-2}(\Omega)$-norm to the (densitized) scalar curvature of $g$ at a rate of $O(h^{r+1})$ as $h \to 0$, provided that either $N = 2$ or $r \ge 1$.  As a special case, our result implies the convergence in $H^{-2}(\Omega)$ of the widely used ``angle defect'' approximation of Gaussian curvature on two-dimensional triangulations, without stringent assumptions on the interpolated metric $g_h$.  We present numerical experiments that indicate that our analytical estimates are sharp.
\end{abstract}

\section{Introduction} \label{sec:intro}

Many partial differential equations that arise in mathematical physics and geometric analysis involve the Riemann curvature tensor and its contractions.  The scalar curvature $R$, which is obtained from two contractions of the Riemann curvature tensor, is particularly important;  it serves as the integrand in the Einstein-Hilbert functional from general relativity, and it appears in the governing equation for two-dimensional Ricci flow.  To approximate solutions to PDEs involving the scalar curvature, it is necessary to discretize the nonlinear differential operator that sends a Riemannian metric tensor to its scalar curvature.  The goal of this paper is to construct and analyze such discretizations in arbitrary dimension $N \ge 2$.

We are specifically interested in the setting where a smooth Riemannian metric tensor $g$ on a polyhedral domain $\Omega \subset \mathbb{R}^N$ is approximated by a piecewise polynomial \emph{Regge metric} $g_h$ on a simplicial triangulation $\mathcal{T}$ of $\Omega$ having maximum element diameter $h$.  Here, a metric is called a Regge metric on $\mathcal{T}$ if it is piecewise smooth and its tangential-tangential components are single-valued on every codimension-1 simplex in $\mathcal{T}$.  When such a metric is piecewise polynomial, it belongs to a finite element space called the \emph{Regge finite element space}~\cite{christiansen2004characterization,christiansen2011linearization,li2018regge}.   Regge metrics are not classically differentiable, so our first task will be to assign meaning to the scalar curvature of $g_h$.  Our definition, which is a natural generalization of one that is now well-established in dimension $N=2$, treats the scalar curvature of $g_h$ as a distribution and regards it as an approximation of the densitized scalar curvature of $g$, i.e.~the scalar curvature $R$ times the volume form $\omega$.  For piecewise constant Regge metrics, our definition reduces to the classical definition of the distributional densitized curvature on piecewise flat spaces~\cite{regge1961general,cheeger1984curvature}.  It is a linear combination of Dirac delta distributions supported on $(N-2)$-simplices $S$, weighted by the \emph{angle defect} at $S$: $2\pi$ minus the sum of the dihedral angles incident at $S$.  For piecewise polynomial Regge metrics of higher degree, it includes additional contributions involving the scalar curvature in the interior of each $N$-simplex and the jump in the mean curvature across each $(N-1)$-simplex.

We study the convergence of the distributional densitized scalar curvature of $g_h$ to the densitized scalar curvature of $g$ under refinement of the triangulation.  We show in Theorem~\ref{thm:conv} that in the $H^{-2}(\Omega)$-norm, this convergence takes place at a rate of $O(h^{r+1})$ when $g_h$ is an optimal-order interpolant of $g$ that is piecewise polynomial of degree $r \ge 0$, provided that either $N=2$ or $r \ge 1$.  Our numerical experiments in Section~\ref{sec:numerical} suggest that these estimates are sharp in general.

To put this convergence result into context, let us summarize some existing convergence results in the literature on finite element approximation of the scalar curvature.  We first need to assemble some notation.  

\paragraph{Notation.}  In what follows, $W^{s,p}(\Omega)$ denotes the Sobolev-Slobodeckij space of differentiability index $s \in [0,\infty)$ and integrability index $p \in [1,\infty]$, and $\|\cdot\|_{W^{s,p}(\Omega)}$ and $|\cdot|_{W^{s,p}(\Omega)}$ denote the associated norm and semi-norm, which we always take with respect to the Euclidean metric.  We denote $L^p(\Omega)=W^{0,p}(\Omega)$ and $H^s(\Omega) = W^{s,2}(\Omega)$.  For $k \in \mathbb{N}$, we denote $H^{-k}(\Omega) = (H^k_0(\Omega))'$, where $H^k_0(\Omega)$ denotes the space of functions in $H^k(\Omega)$ whose derivatives of order $0$ through $k-1$ have vanishing trace on $\partial\Omega$, and the prime denotes the dual space.  Occasionally we use weighted $L^p$ and $H^{-k}$ spaces associated with a Riemannian metric $g$, which we denote by $L^p(\Omega,g)$ and $H^{-k}(\Omega,g)$; see Section~\ref{sec:convergence} and~\cite[Equation 4.1]{gawlik2020high} for details.

If $g$ is a smooth Riemannian metric and $g_h$ is a Regge metric, then $R(g)$ denotes the scalar curvature of $g$, $(R\omega)(g)$ denotes the densitized scalar curvature of $g$, $(R\omega)_{\rm dist}(g_h)$ denotes the distributional densitized scalar curvature of $g_h$ (defined below in Definition~\ref{def:distcurv}), and $R_h^{(q)}(g_h)$ denotes the $L^2(\Omega,g_h)$-projection of $(R\omega)_{\rm dist}(g_h)$ onto the Lagrange finite element space of degree $q$.  

We also use the terms \emph{optimal-order interpolant}, \emph{canonical interpolant}, and \emph{geodesic interpolant}  below.  The first of these is a catch-all term for any piecewise polynomial interpolant $g_h$ of $g$ that belongs to the Regge finite element space and enjoys error estimates of optimal order in $W^{s,p}(T)$-norms on $N$-simplices $T$; see Definition~\ref{def:optimalorder}.  The \emph{canonical interpolant} is a specific interpolant (which is optimal-order) detailed in~\cite[Chapter 2]{li2018regge}.  The \emph{geodesic interpolant} of $g$ is the unique piecewise constant Regge metric $g_h$ with the property that the length of every edge in $\mathcal{T}$, as measured by $g_h$, agrees with the geodesic distance between the corresponding vertices in $\mathcal{T}$, as measured by $g$.    

\paragraph{Summary of existing results.}
We can now summarize some existing results about the approximation of $g$'s curvature by $g_h$'s distributional curvature.  Throughout what follows, the letter $r$ denotes the polynomial degree of $g_h$.
\begin{enumerate}
\item Cheeger, M\"uller, and Schrader~\cite[Equation (5.7) and Theorem 5.1]{cheeger1984curvature} proved that if $r=0$ and $g_h$ is the geodesic interpolant of $g$, then $(R\omega)_{\rm dist}(g_h)$ converges to $(R\omega)(g)$ in the (setwise) sense of measures at a rate of $O(h)$ in dimension $N=2$ and $O(h^{1/2})$ in dimension $N \ge 3$.  
\item Gawlik~\cite[Theorem 4.1]{gawlik2020high} proved that if $r \ge 1$, $N=2$, and $g_h$ is any optimal-order interpolant of $g$, then $R_h^{(q)}(g_h)$ converges to $R(g)$ at a rate of $O(h^r)$ in the $H^{-1}(\Omega,g)$-norm and at a rate of $O(h^{r-k-1})$ in the broken $H^k(\Omega)$-norm, $k=0,1,2,\dots,r-2$, provided that $q \ge \max\{1,r-2\}$.
\item Berchenko-Kogan and Gawlik~\cite[Corollary 6.2]{berchenko2022finite} proved that if $r \ge 1$, $N=2$, and $g_h$ is any optimal-order interpolant of $g$, then $(R\omega)_{\rm dist}(g_h)$ converges to $(R\omega)(g)$ at a rate of $O(h^r)$ in the norm $\|u\|_{V',h} = \sup_{v \in V, v \neq 0} \langle u, v \rangle_{V',V} / \|v\|_{V,h}$, where 
\begin{equation} \label{Vdef}
V = \{v \in H^1_0(\Omega) \mid \left.v\right|_T \in H^2(T) \, \forall T \in \mathcal{T}^N  \}
\end{equation} 
and $\|v\|_{V,h} = |v|_{H^1(\Omega)} + \left( \sum_{T \in \mathcal{T}^N} h_T^2 |v|_{H^2(T)}^2 \right)^{1/2}$.  Here, $h_T$ denotes the diameter of $T$, and $\mathcal{T}^N$ denotes the set of $N$-simplices in $\mathcal{T}$.
\item Gopalakrishnan, Neunteufel, Sch\"oberl, and Wardetzky~\cite[Theorem 6.5 and Corollary 6.6]{gopalakrishnan2022analysis} proved that if $r \ge 0$, $N=2$, and $g_h$ is the canonical interpolant of $g$, then $R_h^{(r+1)}(g_h)$ converges to $R(g)$ at a rate of $O(h^{r+1})$ in the $H^{-1}(\Omega,g)$-norm and at a rate of $O(h^{r-k})$ in the broken $H^k(\Omega)$-norm, $k=0,1,2,\dots,r-1$.
\end{enumerate}

\paragraph{New results.}  As one can see from above, our analysis in this paper covers two important cases that have not yet been addressed in the literature:
\begin{enumerate}
\item We prove a convergence result in the case where $N \ge 3$ and $r \ge 1$.  This opens the door to the use of piecewise polynomial Regge metrics to approximate scalar curvature in high dimensions.
\item We prove a convergence result in the case where $N=2$, $r = 0$, and $g_h$ is an arbitrary optimal-order interpolant of $g$.  This has been a longstanding gap in the literature on Gaussian curvature approximation.  Previous efforts to address the case where $N=2$ and $r=0$ have relied on subtle properties of the geodesic interpolant~\cite{cheeger1984curvature} and the canonical interpolant~\cite{gopalakrishnan2022analysis}.  Our results establish the validity of Gaussian curvature approximations involving the angle defect without stringent assumptions on the interpolated metric tensor $g_h$.
\end{enumerate}
Note that our analysis predicts no convergence at all in the $H^{-2}(\Omega)$-norm when $N \ge 3$ and $r=0$.  Our numerical experiments suggest that this result is sharp for general optimal-order interpolants.  However, for the canonical interpolant, numerical experiments suggest that $(R\omega)_{\rm dist}(g_h)$ converges to $(R\omega)(g)$ in the $H^{-2}(\Omega)$-norm at a rate of $O(h)$ when $N \ge 3$ and $r=0$.  We intend to study this superconvergence phenomenon exhibited by the canonical interpolant in future work.

\paragraph{Structure of the paper.}
Our strategy for proving convergence of $(R\omega)_{\rm dist}(g_h)$ to $(R\omega)(g)$ consists of two steps.  First, in Sections~\ref{sec:evolution}-\ref{sec:curvature}, we study the evolution of $(R\omega)_{\rm dist}(g_h)$ under deformations of the metric, leading to an integral formula for the error $(R\omega)_{\rm dist}(g_h) - (R\omega)(g)$ which reads
\begin{equation} \label{integralformula}
\langle (R\omega)_{\rm dist}(g_h) - (R\omega)(g), v \rangle_{V',V} = \int_0^1 b_h(\gt(t);\sigma,v) - a_h(\gt(t);\sigma,v) \, dt, \quad \forall v \in V.
\end{equation}
Here, $\gt(t) = (1-t)g + tg_h$, $\sigma = \frac{\partial}{\partial t} \gt(t) = g_h-g$, $V$ is the space defined in~(\ref{Vdef}), and $b_h(\gt(t);\cdot,\cdot)$ and $a_h(\gt(t);\cdot,\cdot)$ are certain metric-dependent bilinear forms.  In Section~\ref{sec:convergence}, we use techniques from finite element theory to estimate the right-hand side of~(\ref{integralformula}), leading to Theorem~\ref{thm:conv}.

The approach above is similar to the one used in dimension $N=2$ in~\cite{gawlik2020high,berchenko2022finite,gopalakrishnan2022analysis}, but there are a few important differences.  First, we work with an integral formula for the error $(R\omega)_{\rm dist}(g_h) - (R\omega)(g)$ rather than an integral formula for the curvature itself.  Previous analyses in~\cite{gawlik2020high,berchenko2022finite,gopalakrishnan2022analysis} hinged on formulas of the latter type.  Loosely speaking, in this paper we compute the evolution of the \emph{error} along a one-parameter family of Regge metrics starting at $g$ and ending at $g_h$, whereas the papers~\cite{gawlik2020high,berchenko2022finite,gopalakrishnan2022analysis} compute the evolution of the \emph{curvature} along a pair of one-parameter families of metrics: one family that starts at the Euclidean metric $\delta$ and ends at $g_h$, and one that starts at $\delta$ and ends at $g$.  The approach based on evolving the error appears to be better suited for proving optimal error estimates.

Another key aspect of our analysis is our use of the $H^{-2}(\Omega)$-norm to measure the error.  This norm is weaker than the ones used in~\cite{gawlik2020high,berchenko2022finite,gopalakrishnan2022analysis}, and it appears to be more natural for measuring the error in the curvature.  For example, for piecewise constant Regge metrics in dimension $N=2$, we show that convergence of $(R\omega)_{\rm dist}(g_h)$ to $(R\omega)(g)$ holds in the $H^{-2}(\Omega)$-norm for any optimal-order interpolant of $g$, but numerical experiments suggest that it fails to hold in stronger norms when $g_h$ is not the canonical interpolant of $g$.  A key tool that we use to prove convergence in $H^{-2}(\Omega)$ is the near-equivalence of a certain pair of metric-dependent, mesh-dependent norms on $V$; see Proposition~\ref{prop:equiv}.  This equivalence is similar to one that Walker~\cite[Theorems 4.1 and 4.3]{walker2022poincare} proved for an analogous family of mesh-dependent norms on triangulated surfaces.

\paragraph{Additional comments.}
The formula~(\ref{integralformula}) is not only useful for the error analysis, but it is also interesting in its own right.  It has a differential counterpart (see Theorem~\ref{thm:distcurvdot}) that reads
\begin{equation} \label{distcurvdotintro}
\frac{d}{dt} \langle (R\omega)_{\rm dist}(\gt(t)), v \rangle_{V',V} = b_h(\gt(t);\sigma,v) - a_h(\gt(t);\sigma,v), \quad \forall v \in V,
\end{equation}
which mimics the formula
\begin{equation} \label{curvdotintro}
\frac{d}{dt} \int_\Omega R v \omega = \int_\Omega (\dv\dv\mathbb{S}\sigma) v \omega - \int_\Omega \langle G, \sigma \rangle v \omega, \quad \forall v \in V
\end{equation}
that holds for a family of smooth Riemannian metrics $g(t)$ with densitized scalar curvature $R\omega$ and Einstein tensor $G = \Ric - \frac{1}{2}Rg$.  Here, $\mathbb{S}\sigma = \sigma-g\Tr\sigma$, and $\dv$ is the covariant divergence operator; see below for more notational details.  

The correspondence between~(\ref{distcurvdotintro}) and~(\ref{curvdotintro}) becomes even more transparent when one inspects the formulas for $b_h$ and $a_h$ (see Theorem~\ref{thm:distcurvdot}).   The bilinear form $b_h(\gt;\cdot,\cdot)$ is (up to the appearance of $\mathbb{S}$) a non-Euclidean, $N$-dimensional generalization of a bilinear form that appears in the Hellan-Herrmann-Johnson finite element method~\cite{babuska1980analysis,arnold1985mixed,brezzi1977mixed,braess2018two,braess2019equilibration,arnold2020hellan,pechstein2017tdnns,chen2018multigrid}.  It can be regarded as the integral of $\dv\dv\mathbb{S} \sigma$ against $v$, where $\dv\dv$ is interpreted in a distributional sense.  This link with the Hellan-Herrmann-Johnson method has previously been noted and used in dimension $N=2$~\cite{gawlik2020high,berchenko2022finite,gopalakrishnan2022analysis}.

The bilinear form $a_h(\gt;\cdot,\cdot)$, which is only nonzero in dimension $N \ge 3$, appears to play the role of $\int_\Omega \langle G, \sigma \rangle v \omega$, which is also only nonzero in dimension $N \ge 3$.  It gives rise to a natural way of defining the Einstein tensor in a distributional sense for Regge metrics.  We discuss this more in Section~\ref{sec:einstein}.  Among other things, we point out that the formula for $a_h$ contains a term involving the jump in the trace-reversed second fundamental form across codimension-1 simplices; the same quantity arises in studies of singular sources in general relativity, where it encodes the well-known \emph{Israel junction conditions} across a hypersurface on which stress-energy is concentrated~\cite{israel1966singular}.

There are a few other connections between our calculations and ones that appear in the physics literature.  The variation of the Gibbons-Hawking-York boundary term in general relativity~\cite{gibbons1993action,york1972role} is one example.  It has many parallels to our calculations in Section~\ref{sec:evolution_mean}, and one can undoubtedly find formulas like~(\ref{Homegadot}) in the literature after reconciling notations.  We still give a full derivation of such formulas, not only to familiarize the reader with our notation, but also to provide careful derivations that refrain from discarding total derivatives (which integrate to zero on manifolds without boundary, but not in general) and minimize the use of local coordinate calculations where possible.

\section{Evolution of geometric quantities} \label{sec:evolution}

In this section, we consider an $N$-dimensional manifold $M$ equipped with a smooth Riemannian metric $g$, and we study the evolution of various geometric quantities under deformations of $g$.

We adopt the following notation in this section.  The Levi-Civita connection associated with $g$ is denoted $\nabla$.  If $\sigma$ is a $(p,q)$-tensor field, then its covariant derivative is the $(p,q+1)$-tensor field $\nabla\sigma$, and its covariant derivative in the direction of a vector field $X$ is the $(p,q)$-tensor field $\nabla_X \sigma$.  Its trace $\Tr\sigma$ is the contraction of $\sigma$ along the first two indices, using $g$ to raise or lower indices as needed.  We denote $\dv\sigma = \Tr\nabla\sigma$ and $\Delta \sigma = \dv \nabla \sigma$.  The $g$-inner product of two $(p,q)$-tensor fields $\sigma$ and $\rho$ is denoted $\langle \sigma, \rho \rangle$. 

The volume form associated with $g$ is denoted $\omega$.  The Ricci tensor and the scalar curvature of $g$ are denoted $\Ric$ and $R$, respectively.  When we wish to emphasize their dependence on $g$, we write $\omega(g)$, $\Ric(g)$, $R(g)$, etc.

If $D$ is an embedded submanifold of $M$, then we denote by $\omega_D$ the induced volume form on $D$.  If $\sigma$ is a tensor field on $M$, then $\left.\sigma\right|_D$ denotes the pullback of $\sigma$ under the inclusion $D \hookrightarrow M$.  Later we will introduce some additional notation related to embedded submanifolds of codimension 1, like the mean curvature $H$ and second fundamental form $\sff$; see Section~\ref{sec:evolution_mean}.

We denote the exterior derivative of a differential form $\alpha$ by $d\alpha$.  If $\alpha$ is a one-form, then $\alpha^\sharp$ denotes the vector field obtained by raising indices with $g$.  If $f$ is a scalar field, then we sometimes interpret the one-form $\nabla f=df$ as the vector field $(df)^\sharp$ without explicitly writing it.

Later, in Section~\ref{sec:convergence}, we will append a subscript $g$ to many quantities like $\nabla$ and $\langle \cdot, \cdot \rangle$ to emphasize their dependence on $g$.  \emph{In that section only}, an absent subscript will generally signal that the quantity in question is computed with respect to the Euclidean metric, which we denote by $\delta$.  We say more about this notational shift in Section~\ref{sec:convergence}.

\subsection{Evolution of the densitized scalar curvature}

First we study the evolution of the densitized scalar curvature $R\omega$ under deformations of the metric.

\begin{proposition} \label{prop:curvdot}
Let $g(t)$ be a family of smooth Riemannian metrics with time derivative $\frac{\partial}{\partial t}g =: \sigma$. We have
\[
\frac{\partial}{\partial t} (R\omega) = (\dv\dv\mathbb{S}\sigma)\omega - \langle G,\sigma \rangle \omega,
\]
where $G = \Ric - \frac{1}{2}Rg$ denotes the Einstein tensor associated with $g$ and 
\[
\mathbb{S} \sigma = \sigma - g \Tr \sigma.
\]
\end{proposition}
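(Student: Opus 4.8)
The plan is to apply the product rule $\frac{\partial}{\partial t}(R\omega) = \dot R\,\omega + R\,\dot\omega$ (writing $\sigma = \dot g$ and a dot for $\frac{\partial}{\partial t}$ throughout) and thereby reduce the claim to two separate first-variation formulas: one for the volume form and one for the scalar curvature. The variation of the volume form is the standard identity $\dot\omega = \tfrac{1}{2}(\Tr\sigma)\,\omega$, obtained by differentiating $\omega = \sqrt{\det g}\,dx^1 \wedge \cdots \wedge dx^N$ via Jacobi's formula for $\frac{\partial}{\partial t}\det g$. The real content lies in the first variation of the scalar curvature, which I expect to read
\[
\dot R = \dv\dv\sigma - \Delta(\Tr\sigma) - \langle \Ric, \sigma \rangle,
\]
with $\Delta = \dv\nabla$ the trace of the Hessian as declared in this section.

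To derive this I would begin from the variation of the Levi-Civita connection. Since the difference of two connections is a tensor, $\dot\nabla$ is a well-defined $(1,2)$-tensor, and the Koszul formula (equivalently, differentiating the Christoffel symbols) yields
\[
\dot\Gamma^k_{ij} = \tfrac{1}{2} g^{kl}\big( \nabla_i \sigma_{jl} + \nabla_j \sigma_{il} - \nabla_l \sigma_{ij} \big),
\]
expressed here in local coordinates for concreteness. The variation of the Ricci tensor then follows from the Palatini identity $\dot R_{ij} = \nabla_k \dot\Gamma^k_{ij} - \nabla_i \dot\Gamma^k_{kj}$, which writes $\dot\Ric$ entirely in terms of covariant derivatives of $\sigma$. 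Contracting with $g$ and invoking metric compatibility $\nabla g = 0$ produces the two divergence terms $\dv\dv\sigma - \Delta(\Tr\sigma)$, while the variation of the inverse metric $\frac{\partial}{\partial t} g^{ij} = -g^{ik}g^{jl}\sigma_{kl}$ contributes $\dot g^{ij} R_{ij} = -\langle\Ric,\sigma\rangle$ when one assembles $\dot R = \dot g^{ij} R_{ij} + g^{ij}\dot R_{ij}$. I expect the main obstacle to be purely one of bookkeeping: the two divergence terms must be carried separately and not integrated by parts against each other, since (unlike in the usual derivation of the Einstein field equations) we are differentiating a genuine tensor identity rather than a functional and may not discard total derivatives.

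It then remains to reconcile these ingredients with the claimed right-hand side. Combining the two variations gives
\[
\tfrac{\partial}{\partial t}(R\omega) = \big( \dv\dv\sigma - \Delta(\Tr\sigma) - \langle\Ric,\sigma\rangle + \tfrac{1}{2} R\,\Tr\sigma \big)\,\omega,
\]
and I would simplify the target by pushing $\mathbb{S}$ through the divergences. Because $\nabla g = 0$ one has $\dv(f g) = df$ for any scalar $f$, hence $\dv\dv(g\,\Tr\sigma) = \Delta(\Tr\sigma)$ and therefore $\dv\dv\mathbb{S}\sigma = \dv\dv\sigma - \Delta(\Tr\sigma)$. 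Similarly $\langle G,\sigma\rangle = \langle\Ric,\sigma\rangle - \tfrac{1}{2}R\langle g,\sigma\rangle = \langle\Ric,\sigma\rangle - \tfrac{1}{2}R\,\Tr\sigma$, using $\langle g,\sigma\rangle = \Tr\sigma$. Substituting these two identities shows that the displayed expression equals $(\dv\dv\mathbb{S}\sigma)\,\omega - \langle G,\sigma\rangle\,\omega$ term by term, which is the assertion.
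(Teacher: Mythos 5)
Your proposal is correct and follows essentially the same route as the paper: product rule, the volume-form variation $\dot\omega = \tfrac{1}{2}(\Tr\sigma)\omega$, the first-variation formula $\dot R = \dv\dv\sigma - \Delta\Tr\sigma - \langle \Ric,\sigma\rangle$, and the reconciliation via $\Delta\Tr\sigma = \dv\dv(g\Tr\sigma)$ and $\Tr\sigma = \langle g,\sigma\rangle$. The only difference is that the paper cites the formula for $\dot R$ (Fischer--Marsden) whereas you derive it from the Christoffel variation and the Palatini identity; your derivation is the standard one and checks out.
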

\begin{proof}
We compute
\[
\frac{\partial}{\partial t} (R\omega) = \dot{R}\omega + R\dot{\omega}
\]
and invoke the well-known formulas~\cite[Lemma 2]{fischer1975deformations}
\[
\dot{R} = \dv \dv \sigma - \Delta \Tr \sigma - \langle \operatorname{Ric}, \sigma \rangle
\]
and~\cite[Equation 2.4]{chow2006hamilton}
\[
\dot{\omega} = \frac{1}{2} (\Tr \sigma) \omega.
\]
Since $\Delta \Tr \sigma = \dv \dv (g \Tr \sigma)$ and $\Tr \sigma = \langle g, \sigma \rangle$, the result follows.
\end{proof}

\subsection{Evolution of the mean curvature} \label{sec:evolution_mean}

Next we study the evolution of the mean curvature $H$ of a hypersurface $F$.  We assume that the tangent bundle of $F$ is trivial, so that there exists a smooth, $g$-orthonormal frame field $\tau_1,\tau_2,\dots,\tau_{N-1}$ on $F$.  (If this is not the case, then one can simply fix a point $p \in F$ and focus on a neighborhood of $p$ on which the tangent bundle is trivial.)  We let $n$ be the unit normal to $F$ so that $n,\tau_1,\tau_2,\dots,\tau_{N-1}$ forms a right-handed $g$-orthonormal frame (in the ambient manifold) at each point on $F$.  If the metric $g$ varies smoothly in time, then we assume that the vectors $n,\tau_1,\tau_2,\dots,\tau_{N-1}$ also vary smoothly in time and remain $g$-orthonormal at all times.

We use the notation
\[
\sff(X,Y) = g(\nabla_X n, Y) = -g(n, \nabla_X Y)
\]
for the second fundamental form on $F$.  Our sign convention is such that $\Tr\sff = H$, and $H$ is positive for a sphere with an outward normal vector.  We also let $\nabla_F$ and $\dv_F$ denote the surface gradient and surface divergence operators on $F$, which have the following meanings.  For a scalar field $v$,
\[
\nabla_F v = \nabla v - n\nabla_n v = \sum_{i=1}^{N-1} \tau_i \nabla_{\tau_i} v,
\]
and for a one-form $\alpha$,
\[
\dv_F \alpha = \Tr\left( \left.\nabla\alpha\right|_F \right) = \sum_{i=1}^{N-1} (\nabla_{\tau_i}\alpha)(\tau_i).
\]
Note that in the formula $\nabla_F v = \nabla v - n\nabla_n v$, we have regarded $\nabla v$ as a vector field rather than a one-form.  
Recall that the surface divergence operator satisfies the identity
\begin{equation} \label{surfacestokes}
\int_F (\dv_F \alpha)\omega_F = \int_{\partial F} \alpha(\nu_F) \omega_{\partial F} + \int_F H \alpha(n) \omega_F,
\end{equation}
where $\nu_F$ is the outward unit normal to $\partial F$ and $H$ is the mean curvature of $F$.  
\begin{proposition} \label{prop:Homegadot}
Let $g(t)$ be a family of smooth Riemannian metrics with time derivative $\frac{\partial}{\partial t}g =: \sigma$.  Let $F$ be a time-independent hypersurface with mean curvature  $H$ and induced volume form $\omega_F$.  Then
\begin{equation} \label{Homegadot}
\frac{\partial}{\partial t} (H\omega_F)  = -\frac{1}{2} \left( \left\langle \overline{\sff},  \left.\sigma\right|_F \right\rangle + (\dv\mathbb{S}\sigma)(n) + \dv_F\left(\sigma(n,\cdot) \right) - H \sigma(n,n)  \right) \omega_F,
\end{equation}
where 
\[
\overline{\sff}(X,Y) = \sff(X,Y) - Hg(X,Y)
\]
is the trace-reversed second fundamental form.
\end{proposition}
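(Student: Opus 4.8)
The plan is to differentiate the product $H\omega_F$ directly, writing $\frac{\partial}{\partial t}(H\omega_F) = \dot H\,\omega_F + H\,\dot\omega_F$, and to treat the two factors separately. The factor $\dot\omega_F$ is the easy one: since $F$ is held fixed while the metric evolves, the induced metric on $F$ has time derivative $\left.\sigma\right|_F$, so the same computation that gives $\dot\omega = \tfrac12(\Tr\sigma)\omega$ in the bulk (used in Proposition~\ref{prop:curvdot}) yields $\dot\omega_F = \tfrac12\big(\sum_{i}\sigma(\tau_i,\tau_i)\big)\omega_F$, the trace being taken over the tangential frame. This term will ultimately combine with part of $\dot H$ to trade $\langle\sff,\left.\sigma\right|_F\rangle$ for the trace-reversed $\langle\overline\sff,\left.\sigma\right|_F\rangle$.

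The substance is in computing $\dot H$. I would start from $H = \sum_i g(\nabla_{\tau_i}n,\tau_i)$ and differentiate, using the product rule to split the result into five contributions: the variation of $g$ itself, the variation of the Levi-Civita connection $\dot\nabla$, and the variations $\dot n$ and $\dot\tau_i$ of the normal and the frame (the latter appearing twice). For the connection I would use the standard formula $g(\dot\nabla_X Y,Z) = \tfrac12\big((\nabla_X\sigma)(Y,Z) + (\nabla_Y\sigma)(X,Z) - (\nabla_Z\sigma)(X,Y)\big)$; evaluated on $(\tau_i,n,\tau_i)$ the symmetry of $\sigma$ collapses this to $\tfrac12(\nabla_n\sigma)(\tau_i,\tau_i)$. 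Throughout I would use the Weingarten identity $\nabla_{\tau_i}n = \sum_j\sff(\tau_i,\tau_j)\tau_j$.

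Next I would pin down $\dot n$ and $\dot\tau_i$ from the constraints. Differentiating the orthonormality relations $g(n,n)=1$, $g(n,\tau_i)=0$, $g(\tau_i,\tau_j)=\delta_{ij}$ gives the normal and tangential components of the variations; the crucial extra input is that $F$ is time-independent, so the tangent spaces are fixed and $\dot\tau_i$ stays tangent. This forces $g(n,\dot\tau_i)=0$, whence $g(\dot n,\tau_i) = -\sigma(n,\tau_i)$ and $g(\dot n,n) = -\tfrac12\sigma(n,n)$, while the tangential evolution $d_{ij}=g(\dot\tau_i,\tau_j)$ is only constrained by $d_{ij}+d_{ji}=-\sigma(\tau_i,\tau_j)$. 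The two frame-variation contributions to $\dot H$ each equal $\sum_{ij}\sff(\tau_i,\tau_j)d_{ij}$; because $\sff$ is symmetric only the symmetric part of $d_{ij}$ survives, the arbitrary antisymmetric (frame-rotation) part drops out, and together they give $-\langle\sff,\left.\sigma\right|_F\rangle$. The term carrying $\dot n$ is where derivatives of the coefficients $-\sigma(n,\tau_j)$ appear; I would recognize these, via the product rule and $g(n,\tau_i)=0$, as the surface divergence of the tangential field $\sum_j\sigma(n,\tau_j)\tau_j$, i.e. as $\dv_F(\sigma(n,\cdot))$ up to the term $H\sigma(n,n)$ coming from the normal part of the sharp of $\sigma(n,\cdot)$.

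Collecting the five pieces, I expect $\dot H = \tfrac12\sum_i(\nabla_n\sigma)(\tau_i,\tau_i) + \tfrac12 H\sigma(n,n) - \dv_F(\sigma(n,\cdot))$, with the two copies of $\langle\sff,\left.\sigma\right|_F\rangle$ cancelling. The final step is a purely algebraic translation into the stated form: expand the ambient operator using $\mathbb{S}\sigma = \sigma - g\Tr\sigma$ to obtain the frame identity $(\dv\mathbb{S}\sigma)(n) = \sum_i(\nabla_{\tau_i}\sigma)(\tau_i,n) - \sum_i(\nabla_n\sigma)(\tau_i,\tau_i)$, likewise expand $\dv_F(\sigma(n,\cdot)) = \sum_i(\nabla_{\tau_i}\sigma)(\tau_i,n) + \langle\sff,\left.\sigma\right|_F\rangle$, use these to eliminate $\sum_i(\nabla_n\sigma)(\tau_i,\tau_i)$, and finally add $H\dot\omega_F$ together with $\langle\overline\sff,\left.\sigma\right|_F\rangle = \langle\sff,\left.\sigma\right|_F\rangle - H\sum_i\sigma(\tau_i,\tau_i)$. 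I expect the main obstacle to be precisely this bookkeeping: correctly separating normal from tangential parts in $\dot n$ and $\dot\tau_i$, keeping the connection-variation term straight, and identifying the surface-divergence structure, rather than any single hard estimate.
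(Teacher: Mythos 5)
Your proposal is correct and follows essentially the same route as the paper: the same product-rule split $\dot H\,\omega_F + H\,\dot\omega_F$, the same determination of $\dot n$ and $\dot\tau_i$ from orthonormality and the fixedness of $F$, the same Christoffel-variation formula, and the same final frame identities for $(\dv\mathbb{S}\sigma)(n)$ and $\dv_F(\sigma(n,\cdot))$; your intermediate result $\dot H = \tfrac12\sum_i(\nabla_n\sigma)(\tau_i,\tau_i) + \tfrac12 H\sigma(n,n) - \dv_F(\sigma(n,\cdot))$ is equivalent to the paper's~(\ref{Hdot2}). The only (cosmetic) difference is that you differentiate $\sum_i g(\nabla_{\tau_i}n,\tau_i)$ rather than $-\sum_i g(n,\nabla_{\tau_i}\tau_i)$, which makes the surface divergence emerge from the derivative of $\dot n$ instead of from the algebraic rewriting in Lemma~\ref{lemma:rewritten}.
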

\begin{remark}
In dimension $N=2$, the formula~(\ref{Homegadot}) simplifies considerably.  Letting $\tau$ and $n$ denote the unit tangent and unit normal to $F$, we have $\nabla_\tau n = H\tau$, $-\nabla_\tau \tau = Hn$, and $\overline{\sff}(\tau,\tau) = g(\nabla_\tau n, \tau) - Hg(\tau,\tau) = H-H=0$, so $\overline{\sff}$ vanishes.  In addition,
\begin{align*}
\dv_F\left(\sigma(n,\cdot) \right) - H \sigma(n,n)
&= \nabla_\tau \left( \sigma(n,\cdot) \right) (\tau) - H \sigma(n,n) \\
&= \nabla_\tau \left( \sigma(n,\tau) \right) - \sigma(n,\nabla_\tau \tau) - H \sigma(n,n) \\
&= \nabla_\tau \left( \sigma(n,\tau) \right).
\end{align*}
Thus, in two dimensions,
\[
\frac{\partial}{\partial t} (H\omega_F)  = -\frac{1}{2}\left( (\dv\mathbb{S}\sigma)(n) + \nabla_\tau \left( \sigma(n,\tau) \right) \right)\omega_F.
\]
\end{remark}

To prove Proposition~\ref{prop:Homegadot}, we write
\begin{equation} \label{Hdot}
\dot{H} = -\sum_{i=1}^{N-1} \frac{\partial}{\partial t} g(n, \nabla_{\tau_i} \tau_i)
\end{equation}
and use the following lemmas.

\begin{lemma} \label{lemma:covdot}
For any time-dependent vector fields $X$ and $Y$,
\[
\frac{\partial}{\partial t} \nabla_Y X = \nabla_{\dot{Y}} X + \nabla_Y \dot{X} + \frac{1}{2} \left( (\nabla_X \sigma) Y + (\nabla_Y \sigma) X - (\nabla \sigma)(X, Y) \right)^\sharp,
\]
where $(\nabla \sigma)(X, Y)$ denotes the one-form $Z \mapsto (\nabla_Z \sigma)(X, Y)$, and $(\nabla_X \sigma) Y$ denotes the one-form $Z \mapsto (\nabla_X \sigma)(Y,Z)$.
\end{lemma}
\begin{proof}
In coordinates,
\[
(\nabla_Y X)^\ell = Y^j \frac{\partial X^\ell}{\partial x^j} + \Gamma^\ell_{ij} Y^j X^i,
\]
where $\Gamma^\ell_{ij}$ denote the Christoffel symbols of the second kind associated with $g$.  Thus,
\begin{align*}
\frac{\partial}{\partial t} (\nabla_Y X)^\ell 
&= \dot{Y}^j \frac{\partial X^\ell}{\partial x^j} + \Gamma^\ell_{ij} \dot{Y}^j X^i + Y^j \frac{\partial \dot{X}^\ell}{\partial x^j} + \Gamma^\ell_{ij} Y^j \dot{X}^i + \dot{\Gamma}^\ell_{ij} Y^j X^i \\
&=  (\nabla_{\dot{Y}} X)^\ell + (\nabla_Y \dot{X})^\ell + \dot{\Gamma}^\ell_{ij} Y^j X^i.
\end{align*}
Next, we recall the following formula for the rate of change of the Christoffel symbols under a metric deformation~\cite[Equation 2.23]{chow2006hamilton}:
\[
\dot{\Gamma}^\ell_{ij} 
= \frac{1}{2} g^{\ell m} \left( (\nabla_i \sigma)_{jm} + (\nabla_j \sigma)_{im} - (\nabla_m \sigma)_{ij} \right).
\]
It follows that
\begin{align*}
\dot{\Gamma}^\ell_{ij} Y^j X^i 
&= \frac{1}{2} g^{\ell m} \left( (\nabla_X \sigma)_{j m} Y^j + (\nabla_Y \sigma)_{i m} X^i - (\nabla_m \sigma)_{ij} Y^j X^i \right) \\
&=  \frac{1}{2} \left[ \left( (\nabla_X \sigma) Y + (\nabla_Y \sigma) X  -  (\nabla \sigma)(X, Y) \right) \right]^\ell.
\end{align*}
Hence,
\[
\frac{\partial}{\partial t} (\nabla_Y X)^\ell = (\nabla_{\dot{Y}} X)^\ell + (\nabla_Y \dot{X})^\ell + \frac{1}{2} \left( (\nabla_X \sigma) Y + (\nabla_Y \sigma) X -  (\nabla \sigma)(X, Y) \right)^\ell.
\]
\end{proof}

\begin{lemma} \label{lemma:ndot}
For any time-dependent vector field $X$,
\[
\frac{\partial}{\partial t} g(n,X) = \frac{1}{2} \sigma(n,n) g(n,X) + g(n,\dot{X}).
\]
\end{lemma}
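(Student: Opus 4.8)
The plan is to expand $\frac{\partial}{\partial t} g(n,X)$ by the product rule, accounting for the time dependence of all three ingredients---the metric $g$, the normal $n$, and the field $X$:
\[
\frac{\partial}{\partial t} g(n,X) = \sigma(n,X) + g(\dot n, X) + g(n,\dot X),
\]
where $\dot g = \sigma$. Since the target formula already contains $g(n,\dot X)$, the whole claim reduces to the pointwise-in-time identity of one-forms
\[
\sigma(n,\cdot) + g(\dot n,\cdot) = \tfrac{1}{2}\sigma(n,n)\,g(n,\cdot),
\]
which I would then evaluate at $X(t)$. Framing it this way is deliberate: because the $\dot X$ contribution has been peeled off, no difficulty is caused by $X$ being time-dependent, as the remaining identity is purely an identity of one-forms at each fixed time.

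To establish this one-form identity I would determine $\dot n$ explicitly by decomposing it in the instantaneous $g$-orthonormal frame $n,\tau_1,\dots,\tau_{N-1}$. Differentiating the constraint $g(n,n)=1$, valid for all $t$, gives $\sigma(n,n) + 2g(\dot n, n) = 0$, hence $g(\dot n, n) = -\tfrac12\sigma(n,n)$, which fixes the normal component. For the tangential components I differentiate $g(n,\tau_i)=0$ to obtain
\[
\sigma(n,\tau_i) + g(\dot n, \tau_i) + g(n,\dot\tau_i) = 0.
\]
This is where the hypothesis that $F$ is time-independent is used: each $\tau_i(t)$ stays in the fixed tangent space of $F$, so $\dot\tau_i$ is again tangent to $F$ and therefore $g(n,\dot\tau_i)=0$. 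We conclude $g(\dot n,\tau_i) = -\sigma(n,\tau_i)$, and assembling the components yields
\[
\dot n = -\tfrac12\sigma(n,n)\,n - \sum_{i=1}^{N-1}\sigma(n,\tau_i)\,\tau_i.
\]

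Finally I would pair this with an arbitrary $X$ and use the orthonormal expansion $X = g(n,X)\,n + \sum_i g(\tau_i,X)\,\tau_i$ to recognize $\sum_i \sigma(n,\tau_i)g(\tau_i,X) = \sigma(n,X) - \sigma(n,n)g(n,X)$; substituting gives $g(\dot n, X) = \tfrac12\sigma(n,n)g(n,X) - \sigma(n,X)$, so that $\sigma(n,X)+g(\dot n,X) = \tfrac12\sigma(n,n)g(n,X)$, which is exactly what the reduction above requires. The only genuinely subtle point is the vanishing $g(n,\dot\tau_i)=0$, and I expect this to be the crux: it is essential that it flows from $F$ being fixed, since otherwise the tangential part of $\dot n$ would pick up extra terms and the clean formula would break. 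Everything else is routine bookkeeping with the product rule and the orthonormal frame.
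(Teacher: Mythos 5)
Your proposal is correct and follows essentially the same route as the paper: both apply the product rule, then exploit the differentiated orthonormality constraints $\frac{\partial}{\partial t}g(n,n)=0$ and $\frac{\partial}{\partial t}g(n,\tau_i)=0$ together with the key observation that $g(n,\dot{\tau}_i)=0$ because $\dot{\tau}_i$ remains tangent to the fixed hypersurface $F$. The only cosmetic difference is that you expand $\dot{n}$ in the orthonormal frame while the paper expands $X$; the underlying computation is identical.
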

\begin{proof}
Writing $X = ng(n,X) + \sum_{i=1}^{N-1} \tau_i g(\tau_i,X)$, we compute
\begin{align*}
\frac{\partial}{\partial t} g(n,X) 
&= \sigma(n,X) + g(\dot{n},X) + g(n,\dot{X}) \\
&= \sigma(n,n)g(n,X) + \sum_{i=1}^{N-1} \sigma(n,\tau_i)g(\tau_i,X) + g(\dot{n},n)g(n,X) + \sum_{i=1}^{N-1} g(\dot{n},\tau_i)g(\tau_i,X) + g(n,\dot{X}) \\
&= \left( \sigma(n,n) + g(\dot{n},n) \right) g(n,X) + \sum_{i=1}^{N-1} \left( \sigma(n,\tau_i) + g(\dot{n},\tau_i) \right) g(\tau_i,X) + g(n,\dot{X}).
\end{align*}
For each $i=1,2,\dots,N-1$, we have
\begin{align*}
0 = \frac{\partial}{\partial t} g(n,\tau_i) 
&= \sigma(n,\tau_i) + g(\dot{n},\tau_i) + g(n,\dot{\tau_i}) \\
&= \sigma(n,\tau_i) + g(\dot{n},\tau_i)
\end{align*}
since $\dot{\tau}_i$ is $g$-orthogonal to $n$.  Likewise,
\begin{align*}
0 &= \frac{\partial}{\partial t} g(n,n) 
= \sigma(n,n) + 2g(n,\dot{n}),
\end{align*}
so the result follows.
\end{proof}

We are now ready to compute the time derivative of the mean curvature $H$.  By Lemma~\ref{lemma:ndot}, we have
\begin{align}
\dot{H} 
&= -\sum_{i=1}^{N-1} \frac{\partial}{\partial t} g(n, \nabla_{\tau_i} \tau_i) \nonumber \\
&= -\sum_{i=1}^{N-1} \left[ \frac{1}{2} \sigma(n,n)g(n,\nabla_{\tau_i} \tau_i) + g\left(n, \frac{\partial}{\partial t} \nabla_{\tau_i} \tau_i \right) \right] \nonumber \\
&= \frac{1}{2} H \sigma(n,n) - \sum_{i=1}^{N-1} g\left(n, \frac{\partial}{\partial t} \nabla_{\tau_i} \tau_i \right). \label{Hdot0}
\end{align}
Using Lemma~\ref{lemma:covdot} and the symmetry of the second fundamental form, we can write the second term as
\begin{align*}
g\left(n, \frac{\partial}{\partial t} \nabla_{\tau_i} \tau_i \right)
&= g(n, \nabla_{\dot{\tau_i}} \tau_i ) + g(n, \nabla_{\tau_i} \dot{\tau_i}) + ( \nabla_{\tau_i} \sigma )(n,\tau_i) - \frac{1}{2}(\nabla_n \sigma)(\tau_i,\tau_i) \\
&= 2g(n, \nabla_{\dot{\tau_i}} \tau_i ) + ( \nabla_{\tau_i} \sigma )(n,\tau_i) - \frac{1}{2}(\nabla_n \sigma)(\tau_i,\tau_i).
\end{align*}
The first term above, when summed over $i$, can be simplified as follows.  We write $\dot{\tau}_i = \sum_{j=1}^{N-1} \tau_j g(\tau_j,\dot{\tau}_i)$ and use the linearity of $\nabla_X Y$ in $X$ to compute
\begin{align*}
2\sum_{i=1}^{N-1} g(n, \nabla_{\dot{\tau_i}} \tau_i )
&= 2\sum_{i=1}^{N-1} \sum_{j=1}^{N-1} g(n, \nabla_{\tau_j} \tau_i ) g(\tau_j,\dot{\tau}_i) \\
&= \sum_{i=1}^{N-1} \sum_{j=1}^{N-1} g(n, \nabla_{\tau_j} \tau_i ) \left( g(\tau_j,\dot{\tau}_i) + g(\dot{\tau}_j,\tau_i) \right) \\
&= -\sum_{i=1}^{N-1} \sum_{j=1}^{N-1} g(n, \nabla_{\tau_j} \tau_i ) \sigma(\tau_j,\tau_i) \\
&= \langle \sff, \left.\sigma\right|_F \rangle.
\end{align*}
Above, we used the symmetry of the second fundamental form to pass from the first line to the second, and we used the identity
\[
0 = \frac{\partial}{\partial t} g(\tau_j,\tau_i) = \sigma(\tau_j,\tau_i) + g(\tau_j,\dot{\tau}_i) + g(\dot{\tau}_j,\tau_i)
\]
to pass from the second line to the third.  Inserting these results into~(\ref{Hdot0}), we get
\begin{align}
\dot{H} 
&= \frac{1}{2}H \sigma(n,n) - \langle \sff, \left.\sigma\right|_F \rangle + \sum_{i=1}^{N-1} \left[ \frac{1}{2} (\nabla_n \sigma)(\tau_i,\tau_i) - ( \nabla_{\tau_i} \sigma )(n,\tau_i) \right]. \label{Hdot} 
\end{align}

\begin{lemma} \label{lemma:rewritten}
We have
\begin{equation} \label{rewritten}
\sum_{i=1}^{N-1} \left[ \frac{1}{2} (\nabla_n \sigma)(\tau_i,\tau_i) - ( \nabla_{\tau_i} \sigma )(n,\tau_i) \right]
= \frac{1}{2} \left( \langle \sff, \left.\sigma\right|_F \rangle - (\dv\mathbb{S}\sigma)(n) - \dv_F\left(\sigma(n,\cdot) \right) \right).
\end{equation}
\end{lemma}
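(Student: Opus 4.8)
The plan is to evaluate both sides in the $g$-orthonormal frame $n,\tau_1,\dots,\tau_{N-1}$ and show they agree term by term. The only quantities on the right-hand side that need unpacking are $(\dv\mathbb{S}\sigma)(n)$ and $\dv_F(\sigma(n,\cdot))$; once these are expanded in the frame, the claimed identity should reduce to the symmetry of $\sigma$. The guiding principle is to organize the computation so that no uncontrolled derivatives of the frame vectors $\tau_i$ survive.

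First I would expand $(\dv\mathbb{S}\sigma)(n)$. Writing $\mathbb{S}\sigma = \sigma - (\Tr\sigma)g$ and using $\nabla g = 0$, the term $\dv((\Tr\sigma)g)$ collapses to the one-form $d(\Tr\sigma)$, whose value on $n$ is $\nabla_n\Tr\sigma$. The key point is that, because the metric contraction commutes with $\nabla$, one has $\nabla_n\Tr\sigma = \Tr(\nabla_n\sigma) = (\nabla_n\sigma)(n,n) + \sum_i(\nabla_n\sigma)(\tau_i,\tau_i)$, which lets me avoid differentiating the frame. Expanding $(\dv\sigma)(n) = \sum_a(\nabla_{e_a}\sigma)(e_a,n)$ over $e_a\in\{n,\tau_1,\dots,\tau_{N-1}\}$ and subtracting, the $(n,n)$-contributions cancel and I expect to obtain
\[
(\dv\mathbb{S}\sigma)(n) = \sum_{i=1}^{N-1}\bigl[(\nabla_{\tau_i}\sigma)(\tau_i,n) - (\nabla_n\sigma)(\tau_i,\tau_i)\bigr].
\]

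Next I would expand $\dv_F(\sigma(n,\cdot)) = \sum_i(\nabla_{\tau_i}(\sigma(n,\cdot)))(\tau_i)$ via the Leibniz rule. Applying $(\nabla_X\alpha)(Y) = X(\alpha(Y)) - \alpha(\nabla_X Y)$ with $\alpha = \sigma(n,\cdot)$ and distributing $\nabla_{\tau_i}$ across the scalar $\sigma(n,\tau_i)$, the terms $\sigma(n,\nabla_{\tau_i}\tau_i)$ cancel, leaving $(\nabla_{\tau_i}\sigma)(n,\tau_i) + \sigma(\nabla_{\tau_i}n,\tau_i)$. Since $n$ is a unit field, $g(\nabla_{\tau_i}n,n)=0$, so $\nabla_{\tau_i}n = \sum_j\sff(\tau_i,\tau_j)\tau_j$ is tangential, and summing the resulting terms gives $\langle\sff,\left.\sigma\right|_F\rangle$ by the symmetry of $\sff$ and $\sigma$. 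Thus I expect
\[
\dv_F(\sigma(n,\cdot)) = \sum_{i=1}^{N-1}(\nabla_{\tau_i}\sigma)(n,\tau_i) + \langle\sff,\left.\sigma\right|_F\rangle.
\]

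Finally, substituting both expansions into the right-hand side of~(\ref{rewritten}), the two occurrences of $\langle\sff,\left.\sigma\right|_F\rangle$ cancel, and using the symmetry $(\nabla_{\tau_i}\sigma)(\tau_i,n)=(\nabla_{\tau_i}\sigma)(n,\tau_i)$ collapses the remaining sums to $\sum_i\bigl[\tfrac12(\nabla_n\sigma)(\tau_i,\tau_i) - (\nabla_{\tau_i}\sigma)(n,\tau_i)\bigr]$, which is exactly the left-hand side. The main obstacle is purely organizational: one must track the terms involving $\nabla_{\tau_i}\tau_i$ and $\nabla_{\tau_i}n$ (the second fundamental form in disguise) and confirm they cancel or assemble into $\langle\sff,\left.\sigma\right|_F\rangle$ correctly. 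The two enabling moves are using compatibility of $\nabla$ with contraction to rewrite $\nabla_n\Tr\sigma$ without differentiating the frame, and using the Leibniz rule for $\dv_F$ so that the only surviving tangential-derivative-of-$n$ term is precisely the one producing $\langle\sff,\left.\sigma\right|_F\rangle$.
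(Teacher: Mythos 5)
Your proposal is correct and follows essentially the same route as the paper: both expand $(\dv\mathbb{S}\sigma)(n)$ using the fact that trace commutes with covariant differentiation (so that $\nabla_n\Tr\sigma$ is rewritten without differentiating the frame) and expand $\dv_F(\sigma(n,\cdot))$ via the Leibniz rule, with only a cosmetic difference in bookkeeping (you push the Leibniz expansion one step further so the two copies of $\langle\sff,\left.\sigma\right|_F\rangle$ cancel explicitly, while the paper collects all terms into a single final sum). All the intermediate identities you state match the paper's (\ref{divSsigman}), (\ref{divFsigman}), and (\ref{sffsigma}).
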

\begin{proof}
The identity $0 = \nabla_{\tau_i} \left( g(n,n) \right) = 2g(n,\nabla_{\tau_i} n)$ shows that $\nabla_{\tau_i} n$ is in the span of $\{\tau_j\}_{j=1}^{N-1}$, so the first term on the right-hand side of~(\ref{rewritten}) satisfies
\begin{align} 
\langle \sff, \left.\sigma\right|_F \rangle
&= \sum_{i=1}^{N-1} \sum_{j=1}^{N-1} \sigma(\tau_j,\tau_i) g(\tau_j, \nabla_{\tau_i} n) \nonumber \\
&= \sum_{i=1}^{N-1} \sigma(\nabla_{\tau_i} n, \tau_i). \label{sffsigma}
\end{align}
The second term on the right-hand side of~(\ref{rewritten}) can be computed as follows.  Recalling that $\mathbb{S}\sigma = \sigma - g\Tr\sigma$, we have
\begin{align*}
(\dv \mathbb{S} \sigma)(n) 
&= \nabla_n (\mathbb{S}\sigma)(n, n) + \sum_{i=1}^{N-1} \nabla_{\tau_i} (\mathbb{S}\sigma)(n, \tau_i) \\
&=  (\nabla_n \sigma)(n,n) - \nabla_n (g \Tr \sigma)(n, n) + \sum_{i=1}^{N-1} \left[ (\nabla_{\tau_i} \sigma)(n,\tau_i) - \nabla_{\tau_i} (g \Tr \sigma)(n, \tau_i) \right] \\
&= (\nabla_n \sigma)(n,n) - g(n, n) \nabla_n \Tr \sigma + \sum_{i=1}^{N-1} \left[ (\nabla_{\tau_i} \sigma)(n,\tau_i) - g(n, \tau_i) \nabla_{\tau_i} \Tr \sigma \right] \\
&= (\nabla_n \sigma)(n,n) - \nabla_n \Tr \sigma + \sum_{i=1}^{N-1} (\nabla_{\tau_i} \sigma)(n,\tau_i).
\end{align*}
Since the trace commutes with covariant differentiation, 
\begin{align*}
\nabla_n \Tr \sigma 
= \Tr \nabla_n \sigma  
&= (\nabla_n \sigma)(n,n) + \sum_{i=1}^{N-1} (\nabla_n \sigma)(\tau_i,\tau_i).
\end{align*}
Thus,
\begin{equation} \label{divSsigman}
(\dv \mathbb{S} \sigma)(n) = \sum_{i=1}^{N-1} \left[ (\nabla_{\tau_i} \sigma)(n,\tau_i) - (\nabla_n \sigma)(\tau_i,\tau_i) \right].
\end{equation}
The third term on the right-hand side of~(\ref{rewritten}) is given by
\begin{align} 
\dv_F\left(\sigma(n,\cdot) \right) 
&= \sum_{i=1}^{N-1} \nabla_{\tau_i} \left( \sigma(n,\cdot) \right) (\tau_i) \nonumber \\
&=  \sum_{i=1}^{N-1} \left[ \nabla_{\tau_i} \left( \sigma(n,\tau_i) \right) - \sigma(n,\nabla_{\tau_i} \tau_i) \right]. \label{divFsigman}
\end{align}
Combining~(\ref{sffsigma}),~(\ref{divSsigman}), and~(\ref{divFsigman}), we see that
\begin{align*}
\frac{1}{2} &\left( \langle \sff, \left.\sigma\right|_F \rangle - (\dv\mathbb{S}\sigma)(n) - \dv_F\left(\sigma(n,\cdot) \right) \right) \\
&= \frac{1}{2} \sum_{i=1}^{N-1} \left[ \sigma(\nabla_{\tau_i} n, \tau_i) - (\nabla_{\tau_i} \sigma)(n,\tau_i) + (\nabla_n \sigma)(\tau_i,\tau_i) - \nabla_{\tau_i}\left(\sigma(n,\tau_i)\right) + \sigma(n,\nabla_{\tau_i} \tau_i) \right] \\
&= \frac{1}{2} \sum_{i=1}^{N-1} \left[ (\nabla_n \sigma)(\tau_i,\tau_i) - 2(\nabla_{\tau_i}\sigma)(n,\tau_i) \right].
\end{align*}
\end{proof}

Combining Lemma~\ref{lemma:rewritten} with~(\ref{Hdot}), we get
\begin{equation} \label{Hdot2}
\dot{H} =  \frac{1}{2} \left( -\langle \sff, \left.\sigma\right|_F \rangle - (\dv\mathbb{S}\sigma)(n) - \dv_F\left(\sigma(n,\cdot) \right) + H \sigma(n,n)  \right).
\end{equation}
Proposition~\ref{prop:Homegadot} now follows from the identities
\[
\frac{\partial}{\partial t}(H\omega_F) = \dot{H}\omega_F + H\dot{\omega}_F =\dot{H}\omega_F + \frac{1}{2} H \Tr\left( \left.\sigma\right|_F \right) \omega_F
\]
and
\[
\langle \sff, \left.\sigma\right|_F \rangle - H \Tr\left( \left.\sigma\right|_F \right) = \langle \overline{\sff} ,  \left.\sigma\right|_F \rangle.
\]

\subsection{Evolution of angles}

Next we study the evolution of angles under deformations of the metric.

\begin{lemma} \label{lemma:angledot}
Let $g(t)$ be a family of smooth Riemannian metrics with time derivative $\frac{\partial}{\partial t}g =: \sigma$.  Let $(\bar{n}(t),\bar{\tau}(t))$ be a pair of $g(t)$-orthonormal vectors, and let $(n(t),\tau(t))$ be another pair of $g(t)$-orthonormal vectors lying in the span of $(\bar{n}(t),\bar{\tau}(t))$.  Let $\theta(t)$ be the angle for which
\begin{align*}
\tau &= \bar{\tau}\cos\theta + \bar{n}\sin\theta, \\
n &= -\bar{\tau}\sin\theta + \bar{n}\cos\theta.
\end{align*}
Assume that these vectors vary smoothly in time, and assume that $n(t)$ (respectively, $\bar{n}(t)$) is at all times $g(t)$-orthogonal to a time-independent hypersurface $F$ (respectively, $\bar{F}$).  
Then, at all times for which $\theta \in (0,\pi)$, we have
\begin{equation} \label{angledot}
\frac{\partial}{\partial t}\theta = \frac{1}{2}\sigma(n,\tau) - \frac{1}{2} \sigma(\bar{n},\bar{\tau}).
\end{equation}
\end{lemma}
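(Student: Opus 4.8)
The plan is to extract $\theta$ from the given rotation relations as a $g$-inner product, differentiate, and isolate $\dot\theta$ by projecting onto a suitable frame vector. Since $(\bar{n},\bar{\tau})$ is $g$-orthonormal, the first relation gives $\cos\theta = g(\tau,\bar{\tau})$ and $\sin\theta = g(\tau,\bar{n})$. Rather than differentiate these scalar relations directly (which forces me to confront $\dot{\bar{\tau}}$ and $\dot{\bar{n}}$ in inconvenient combinations), I would differentiate the \emph{vector} identity $\tau = \bar{\tau}\cos\theta + \bar{n}\sin\theta$ itself. Using the second relation to recognize $-\bar{\tau}\sin\theta + \bar{n}\cos\theta = n$, this yields $\dot\tau = \dot{\bar{\tau}}\cos\theta + \dot{\bar{n}}\sin\theta + \dot\theta\, n$. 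Taking the $g$-inner product with $n$ and using $g(n,n)=1$ then isolates
\[
\dot\theta = g(\dot\tau, n) - \cos\theta\, g(\dot{\bar{\tau}}, n) - \sin\theta\, g(\dot{\bar{n}}, n).
\]

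Every term on the right is an inner product of a frame-vector velocity with a frame vector, which I would pin down from two sources. First, differentiating the $g$-orthonormality relations $g(e_i,e_j)=\delta_{ij}$ for each pair gives identities such as $g(\dot{\bar{\tau}},\bar{\tau}) = -\tfrac12\sigma(\bar{\tau},\bar{\tau})$ and $\sigma(\tau,n) + g(\dot\tau,n) + g(\tau,\dot n) = 0$, and likewise for the barred frame. Second --- and this is the geometric heart of the lemma --- I would exploit that $F$ is time-independent while $n$ stays $g(t)$-orthogonal to it: for any \emph{fixed} tangent vector $X \in T_pF$ the relation $g(n,X)=0$ holds for all $t$, so $g(\dot n, X) = -\sigma(n,X)$. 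Because $\tau(t)\perp n(t)$ in the $g$-metric forces $\tau(t)\in T_pF = n^{\perp}$, I may evaluate this instantaneously at $X=\tau$ to obtain $g(\dot n,\tau) = -\sigma(n,\tau)$, and similarly $g(\dot{\bar{n}},\bar{\tau}) = -\sigma(\bar{n},\bar{\tau})$ from $\bar{F}$. Combining the orthonormality identity with this and the symmetry of $\sigma$ then shows the first term vanishes, $g(\dot\tau,n)=0$, and likewise $g(\dot{\bar{\tau}},\bar{n})=0$. The main obstacle is precisely this step: one must keep straight that the hypersurface constraint produces the clean identity $g(\dot n,X)=-\sigma(n,X)$ only for time-independent $X$, and then justify evaluating it at the moving vector $\tau(t)$ at each instant (valid because $\tau(t)$ lies in the fixed subspace $T_pF$ at that instant).

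With $g(\dot\tau,n)=0$ and $g(\dot{\bar{\tau}},\bar{n})=0$ in hand, the remaining two terms become explicit. Expanding $n = -\bar{\tau}\sin\theta + \bar{n}\cos\theta$ inside $g(\dot{\bar{\tau}},n)$ and $g(\dot{\bar{n}},n)$ and substituting the barred orthonormality and hypersurface identities reduces $\dot\theta$ to
\[
\dot\theta = \tfrac12\sin\theta\cos\theta\bigl(\sigma(\bar{n},\bar{n}) - \sigma(\bar{\tau},\bar{\tau})\bigr) - \sin^2\theta\,\sigma(\bar{n},\bar{\tau}).
\]
To finish, I would independently expand the target expression $\tfrac12\sigma(n,\tau) - \tfrac12\sigma(\bar{n},\bar{\tau})$ in the barred frame using the rotation relations and the symmetry $\sigma(\bar{n},\bar{\tau})=\sigma(\bar{\tau},\bar{n})$; the trigonometric identity $\tfrac12(\cos^2\theta-\sin^2\theta)-\tfrac12 = -\sin^2\theta$ collapses it to exactly the same combination, proving~(\ref{angledot}). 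The hypothesis $\theta\in(0,\pi)$ enters only to guarantee that $\theta(t)$ is a well-defined, smooth function of $t$, so that $\dot\theta$ is meaningful.
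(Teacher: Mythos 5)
Your proof is correct. You verify the same two geometric facts that the paper encapsulates in its Lemma~\ref{lemma:ndot} — namely that differentiating the orthonormality relations gives $g(\dot{\bar{\tau}},\bar{\tau})=-\tfrac12\sigma(\bar{\tau},\bar{\tau})$, $g(\dot{\bar{n}},\bar{n})=-\tfrac12\sigma(\bar{n},\bar{n})$, etc., and that the time-independence of the hypersurface forces $g(\dot{n},X)=-\sigma(n,X)$ for $X$ tangent to $F$ (your justification for evaluating this at the moving vector $\tau(t)$ is sound, since $\tau(t)$ lies in the fixed subspace $T_pF$ at each instant). Where you diverge is in the quantity you differentiate: the paper differentiates the scalar relation $\cos\theta=g(\bar{n},n)$, applies its Lemma~\ref{lemma:ndot} to each normal in turn, recognizes $\bar{n}\cos\theta-n=\bar{\tau}\sin\theta$ and $n\cos\theta-\bar{n}=-\tau\sin\theta$, and divides by $\sin\theta$; you instead differentiate the vector identity $\tau=\bar{\tau}\cos\theta+\bar{n}\sin\theta$ and pair with $n$, which isolates $\dot\theta$ with coefficient $1$. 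Your route costs a full expansion of both $\dot\theta$ and the target $\tfrac12\sigma(n,\tau)-\tfrac12\sigma(\bar{n},\bar{\tau})$ in the barred frame plus a trigonometric identity to match them (I checked that both sides reduce to $\tfrac12\sin\theta\cos\theta\bigl(\sigma(\bar{n},\bar{n})-\sigma(\bar{\tau},\bar{\tau})\bigr)-\sin^2\theta\,\sigma(\bar{n},\bar{\tau})$, so this step is right), whereas the paper's route lands on the answer in mixed form almost immediately. In exchange, you never divide by $\sin\theta$, so your argument remains valid at $\theta=0$ or $\pi$ provided $\theta(t)$ is smoothly defined there — a minor but genuine gain in generality over the paper's hypothesis $\theta\in(0,\pi)$.
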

\begin{proof}
Differentiating the relation $\cos\theta = g(\bar{n},n)$ yields
\[
-\dot{\theta}\sin\theta = \frac{\partial}{\partial t} \left( g(\bar{n},n) \right).
\]
In particular, at any time $s$, we can write
\[
-\dot{\theta}(s)\sin\theta(s) = \left.\frac{\partial}{\partial t}\right|_{t=s} \left( g(t)(\bar{n}(t),n(s)) \right) +  \left.\frac{\partial}{\partial t}\right|_{t=s} \left( g(t)(\bar{n}(s),n(t)) \right) - \sigma(s)(\bar{n}(s),n(s)).
\]
Using Lemma~\ref{lemma:ndot} and suppressing the evaluations at $t=s$, we get
\begin{align*}
-\dot{\theta}\sin\theta 
&= \frac{1}{2}\sigma(\bar{n},\bar{n}) g(\bar{n},n) + \frac{1}{2}\sigma(n,n)g(n,\bar{n}) - \sigma(\bar{n},n) \\
&= \frac{1}{2}\sigma(\bar{n},\bar{n}\cos\theta-n) + \frac{1}{2}\sigma(n\cos\theta-\bar{n},n)\\
&= \frac{1}{2}\sigma(\bar{n},\bar{\tau}\sin\theta) + \frac{1}{2}\sigma(-\tau\sin\theta,n).
\end{align*}
If $\theta \in (0,\pi)$ at time $t=s$, then we can divide by $\sin\theta$ to get~(\ref{angledot}).
\end{proof}

\section{Distributional densitized scalar curvature} \label{sec:curvature}

Let $\mathcal{T}$ be a simplicial triangulation of a polyhedral domain $\Omega \subset \mathbb{R}^N$.  We use $\mathcal{T}^k$ to denote the set of all $k$-simplices in $\mathcal{T}$.  We also use $\mathring{\mathcal{T}}^k$ to denote the subset of $\mathcal{T}^k$ consisting of $k$-simplices that are not contained in the boundary of $\Omega$.  We call such simplices \emph{interior simplices}.  We call $(N-1)$-simplices \emph{faces}.

Let $g$ be a Regge metric on $\mathcal{T}$.  Recall that this means that $\left.g\right|_T$ is a smooth Riemannian metric on each $T \in \mathcal{T}^N$, and the induced metric $\left.g\right|_F$ is single-valued on each $F \in \mathring{\mathcal{T}}^{N-1}$ (and consequently the induced metric is single-valued on all lower-dimensional simplices in $\mathcal{T}$).

On each $T \in \mathcal{T}^N$, we denote by $R_T$ the scalar curvature of $\left.g\right|_T$.  On an interior face $F \in \mathring{\mathcal{T}}^{N-1}$ that lies on the boundary of two $N$-simplices $T^+$ and $T^-$, the second fundamental form on $F$, as measured by $\left.g\right|_{T^+}$, generally differs from that measured by $\left.g\right|_{T^-}$.  We denote by $\llbracket \sff \rrbracket_F$ the jump in the second fundamental form across $F$.  More precisely,
\[
\llbracket \sff \rrbracket_F(X,Y) = \left.g\right|_{T^+}(\nabla_X n^+, Y) + \left.g\right|_{T^-}(\nabla_X n^-, Y)
\]
for any vectors $X,Y$ tangent to $F$, where $n^\pm$ points outward from $T^\pm$, has unit length with respect to $\left.g\right|_{T^\pm}$, and is $\left.g\right|_{T^\pm}$-orthogonal to $F$.  We adopt similar notation for the jumps in other quantities across $F$.  For instance, $\llbracket H \rrbracket_F$ denotes the jump in the mean curvature across $F$.  We sometimes drop the subscript $F$ when there is no danger of confusion.  If $F$ is contained in $\partial\Omega$, then we define the jump in a scalar field $v$ across $F$ to be simply $\llbracket v \rrbracket_F = \left.v\right|_F$.

On each $S \in \mathring{\mathcal{T}}^{N-2}$, the \emph{angle defect} along $S$ is
\[
\Theta_S = 2\pi - \sum_{\substack{T \in \mathcal{T}^N \\ T \supset S}} \theta_{ST},
\]
where $\theta_{ST}$ denotes the dihedral angle formed by the two faces of $T$ that contain $S$, as measured by $\left.g\right|_T$. Generally this angle may vary along $S$.  If $F^+$ and $F^-$ are the two faces of $T$ that contain $S$, and if $n^\pm$ denotes the unit normal to $F^\pm$ with respect to $\left.g\right|_T$ pointing outward from $T$, then 
\[
\cos\theta_{ST} = -\left.g\right|_T(n^+,n^-).
\]

Let 
\[
V = \{v \in H^1_0(\Omega) \mid \forall T \in \mathcal{T}^N, \left.v\right|_T \in H^2(T) \}.
\]
Note that if $v \in V$, then $v$ admits a single-valued trace on every simplex in $\mathcal{T}$ of dimension $\ge N-3$.

\begin{definition} \label{def:distcurv}
Let $g$ be a Regge metric.  The \emph{distributional densitized scalar curvature} of $g$ is the linear functional $(R\omega)_{\rm dist}(g) \in V'$ defined by
\begin{equation} \label{distcurv}
\langle (R\omega)_{\rm dist}(g), v \rangle_{V',V} = \sum_{T \in \mathcal{T}^N} \int_T R_T v \omega_T + 2\sum_{F \in \mathring{\mathcal{T}}^{N-1}} \int_F \llbracket H \rrbracket_F v \omega_F + 2\sum_{S \in \mathring{\mathcal{T}}^{N-2}} \int_S \Theta_S v \omega_S, \quad \forall v \in V.
\end{equation}
\end{definition}

This definition generalizes Definition 3.1 of~\cite{berchenko2022finite}, where the distributional curvature two-form (i.e. the Gaussian curvature times the volume form) is defined for Regge metrics in dimension $N=2$.  Note that the factors of 2 appearing in all but the first term in~(\ref{distcurv}) are consistent with the fact that in dimension $N=2$, the scalar curvature $R$ is twice the Gaussian curvature.  

One can heuristically motivate Definition~\ref{def:distcurv} in much the same way that one motivates its two-dimensional counterpart. When $g$ is piecewise constant, Definition~\ref{def:distcurv} recovers the classical notion~\cite{regge1961general} that the distributional densitized scalar curvature is a linear combination of Dirac delta distributions supported on $(N-2)$-simplices, with weights given by angle defects.  When $g$ is not piecewise constant, additional terms appear which account for the nonzero (classically defined) curvature of $g$ in the interior of each $N$-simplex $T$ and the jump in the mean curvature across each interior face $F$.  The jump in the mean curvature across $F$ can be understood by recalling that the scalar curvature $R$ at a point $p \in F$ can be expressed as (two times) a sum of sectional curvatures of $N(N-1)/2$ tangent planes that are mutually $g$-orthogonal at $p$, $(N-1)(N-2)/2$ of which are tangent to $F$ at $p$ and $N-1$ of which are $g$-orthogonal to $F$ at $p$.  The sectional curvatures corresponding to planes tangent to $F$ are nonsingular, owing to the tangential-tangential continuity of $g$.  The remaining $N-1$ sectional curvatures are singular, and by considering an $N$-dimensional region that encloses a portion of $F$ and has small thickness in the direction that is $g$-orthogonal of $F$, one can use the Gauss-Bonnet theorem (along two-dimensional slices) to approximate the (volume-)integrated sum of these sectional curvatures by the (surface-)integrated jump in the mean curvature across $F$. (In this calculation, one must bear in mind that sectional curvatures and Gaussian curvatures are related via the Gauss-Codazzi equations.) See the discussion after Definition 3.1 in~\cite{berchenko2022finite}, as well as~\cite{strichartz2020defining}, for more insight in dimension $N=2$.  See also~\cite{christiansen2013exact} for a justification of Definition 3.1 in the case where $g$ is piecewise constant and $N \ge 2$.

In the sequel, we will consistently use the letters $T$, $F$, and $S$ to refer to simplices of dimension $N$, $N-1$, and $N-2$, respectively.  We will therefore write $\sum_T$, $\sum_F$, and $\sum_S$ in place of $\sum_{T \in \mathcal{T}^N}$, $\sum_{F \in \mathcal{T}^{N-1}}$, and $\sum_{S \in \mathcal{T}^{N-2}}$, respectively.  When we wish to sum over \emph{interior} simplices of a given dimension, we put a ring on top of the summation symbol.  Thus, for example, $\mathring{\sum}_F$ is shorthand for $\sum_{F \in \mathring{\mathcal{T}}^{N-1}}$.

\subsection{Evolution of the distributional scalar curvature}

We are interested in how~(\ref{distcurv}) changes under deformations of the metric.  To this end, consider a one-parameter family of Regge metrics $g(t)$ with time derivative
\[
\sigma = \frac{\partial}{\partial t}g.
\] 
Our goal will be to compute
\[
\frac{d}{dt} \langle (R\omega)_{\rm dist}(g(t)), v \rangle_{V',V}
\]
with $v \in V$ arbitrary.

According to Propositions~\ref{prop:curvdot} and~\ref{prop:Homegadot}, the derivatives of the first two terms on the right-hand side of~(\ref{distcurv}) satisfy
\[
\frac{d}{dt}\int_T R_T v \omega_T = \int_T \left( \dv\dv\mathbb{S}\sigma - \langle G,\sigma \rangle \right) v \omega_T
\]
and
\begin{align}
2 \frac{d}{dt} \int_F \llbracket H \rrbracket_F v \omega_F 
&= -\int_F \left\llbracket \left\langle \overline{\sff},  \left.\sigma\right|_F \right\rangle + (\dv\mathbb{S}\sigma)(n) + \dv_F\left(\sigma(n,\cdot) \right) - H \sigma(n,n)  \right\rrbracket v \omega_F. \label{Hdot2terms}
\end{align}
For the third term on the right-hand side of~(\ref{distcurv}), we use the following lemma.

\begin{lemma} \label{lemma:angledefectdot}
Along any interior $(N-2)$-simplex $S$, we have
\[
\frac{\partial}{\partial t} (\Theta_S \omega_S) = \frac{1}{2}
\left(\sum_{F \supset S} \llbracket  \sigma(n,\tau) \rrbracket_F  + \Theta_S \Tr(\sigma|_S) \right) \omega_S,
\]
where the sum is over all $(N-1)$-simplices $F$ that contain $S$, $n$ is the unit normal to $F$ with respect to $g$, and $\tau$ is the unit vector with respect to $g$ that points into $F$ from $S$ and is $g$-orthogonal to both $S$ and $n$.  Here, our convention is that if $F$ is shared by two $N$-simplices $T^+$ and $T^-$, then
\[
\llbracket  \sigma(n,\tau) \rrbracket_F = \sigma^+(n^+,\tau) + \sigma^-(n^-,\tau),
\]
where $\sigma^\pm = \left.\sigma\right|_{T^\pm}$ and $n^\pm$ points outward from $T^\pm$.
\end{lemma}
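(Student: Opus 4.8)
The plan is to differentiate the product $\Theta_S\omega_S$ with the product rule and handle the two factors separately: $\frac{\partial}{\partial t}(\Theta_S\omega_S) = \dot\Theta_S\,\omega_S + \Theta_S\,\dot\omega_S$. The second term is immediate. Since $\omega_S$ is the volume form of the metric induced on $S$, whose deformation rate is $\left.\sigma\right|_S$, the same computation that gives $\dot\omega = \frac12(\Tr\sigma)\omega$ in Proposition~\ref{prop:curvdot}, applied intrinsically on $S$, yields $\dot\omega_S = \frac12\Tr(\left.\sigma\right|_S)\,\omega_S$. This already produces the term $\tfrac12\Theta_S\Tr(\left.\sigma\right|_S)\,\omega_S$, so it remains only to identify $\dot\Theta_S$ with $\tfrac12\sum_{F\supset S}\llbracket\sigma(n,\tau)\rrbracket_F$.

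For this I would use $\dot\Theta_S = -\sum_{T\supset S}\dot\theta_{ST}$ (as $2\pi$ is constant) and compute the rate of change of a single dihedral angle via Lemma~\ref{lemma:angledot}. Fixing a point $p\in S$ and working in the $2$-plane $\Pi$ that is $g$-orthogonal to $S$ at $p$, I note that for each face $F\supset S$ the associated vectors $n$ and $\tau$ both lie in $\Pi$ (indeed $\tau\in F$ is orthogonal to $S$, and $n\perp F\supset S$). If $F^+,F^-$ are the two faces of $T$ meeting at $S$, with frames $(n^+,\tau^+)$ and $(n^-,\tau^-)$, then these are two $g$-orthonormal pairs in $\Pi$, and $\cos\theta_{ST} = -g(n^+,n^-)$ exhibits $\theta_{ST}$ as the rotation angle between them. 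This is exactly the setup of Lemma~\ref{lemma:angledot}, with the fixed faces $F^\pm$ serving as the time-independent hypersurfaces and the $g(t)$-unit normals varying in time.

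Applying Lemma~\ref{lemma:angledot} to this pair, after matching its rotation convention to the orientation of $\Pi$ and relating its angle to $\theta_{ST}$, expresses $\dot\theta_{ST}$ as a combination of the two single-face contributions $\tfrac12\sigma(n^\pm,\tau^\pm)$. Substituting into $\dot\Theta_S = -\sum_{T\supset S}\dot\theta_{ST}$ and regrouping the double sum by faces then finishes the argument: every interior face $F\supset S$ is shared by exactly two of the $N$-simplices containing $S$, and the two contributions it receives — one from each simplex, with $n^\pm$ the corresponding outward normals and a common $\tau$ — collapse into precisely $\tfrac12\llbracket\sigma(n,\tau)\rrbracket_F$. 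The collapse is licensed by the fact that $\left.g\right|_F$ is single-valued, so the $\tau$ coming from the two sides of $F$ agree.

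The hard part will be the orientation and sign bookkeeping. Lemma~\ref{lemma:angledot} is phrased for a rotation by an angle $\theta\in(0,\pi)$ with a definite sign convention, whereas $\theta_{ST}$ is read off from the \emph{outward} normals through $\cos\theta_{ST} = -g(n^+,n^-)$; one must orient $\Pi$ and decide which frame plays the role of $(\bar n,\bar\tau)$ so that the lemma's angle corresponds to $\theta_{ST}$ (possibly up to a shift by $\pi$), and then track how the signs of the resulting $\sigma(n^\pm,\tau^\pm)$ terms interact with the overall minus sign from $\Theta_S = 2\pi - \sum_{T}\theta_{ST}$. One must also check that the degenerate case $\theta_{ST}\in\{0,\pi\}$ does not occur for a nondegenerate simplex, so that the division by $\sin\theta$ in Lemma~\ref{lemma:angledot} is valid. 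Once these conventions are fixed consistently, the stated identity follows.
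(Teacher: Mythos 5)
Your proposal is correct and follows essentially the same route as the paper: product rule, the intrinsic identity $\dot\omega_S = \tfrac12\Tr(\sigma|_S)\,\omega_S$, then $\dot\Theta_S = -\sum_{T\supset S}\dot\theta_{ST}$ with Lemma~\ref{lemma:angledot} applied to each dihedral angle and the resulting sum regrouped face-by-face into $\tfrac12\sum_{F\supset S}\llbracket\sigma(n,\tau)\rrbracket_F$, using the single-valuedness of $\tau$ across $F$. The sign and orientation bookkeeping you flag as the remaining work is likewise left implicit in the paper ("this sum can be rearranged to give\ldots").
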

\begin{remark}
Note that $n$ generally differs on either side of $F$, whereas $\tau$ does not, because $g$ has single-valued tangential-tangential components along $F$.
\end{remark}
\begin{proof}
We compute
\[
\dot{\Theta}_S = -\sum_{T \supset S} \dot{\theta}_{ST}
\]
and use Lemma~\ref{lemma:angledot} to differentiate each angle $\theta_{ST}$.  The resulting expression for $\dot{\Theta}_S$ involves differences between $\sigma(n,\tau)$ evaluated on consecutive pairs of faces $F$ emanating from $S$.  This sum can be rearranged to give
\begin{equation} \label{angledefectdot}
\dot{\Theta}_S = \frac{1}{2} \sum_{F \supset S} \llbracket  \sigma(n,\tau) \rrbracket_F.
\end{equation}
We thus get
\begin{align*}
\frac{\partial}{\partial t} (\Theta_S \omega_S) 
&= \dot{\Theta}_S \omega_S  + \Theta_S \dot{\omega}_S \\
&= \frac{1}{2} \sum_{F \supset S} \llbracket  \sigma(n,\tau) \rrbracket_F \omega_S + \frac{1}{2} \Theta_S \Tr\left( \left.\sigma\right|_S \right) \omega_S.
\end{align*}
\end{proof}

It follows from the above lemma that
\begin{align*}
2 \frac{d}{dt} \int_S \Theta_S v \omega_S 
&= \int_S \sum_{F \supset S} \llbracket  \sigma(n,\tau) \rrbracket_F v \omega_S + \int_S \Theta_S \Tr(\sigma|_S) v \omega_S \\
&= \int_S \sum_{F \supset S} \llbracket  \sigma(n,\tau) \rrbracket_F v \omega_S + \int_S \left\langle \Theta_S g|_S, \sigma|_S \right\rangle v \omega_S.
\end{align*}

Collecting our results, we obtain
\begin{align}
\frac{d}{dt}&\langle (R\omega)_{\rm dist}(g(t)), v \rangle_{V',V} 
= \sum_T \int_T (\dv\dv\mathbb{S}\sigma)v \omega_T \nonumber
\\& - \mathring{\sum_F} \int_F \left\llbracket (\dv\mathbb{S}\sigma)(n) + \dv_F\left(\sigma(n,\cdot) \right) - H \sigma(n,n) \right\rrbracket_F v \omega_F + \mathring{\sum_S} \int_S \sum_{F \supset S} \llbracket  \sigma(n,\tau) \rrbracket_F v\omega_S \label{distcurvdot0} \\
&- \sum_T \int_T \langle G, \sigma \rangle v\omega_T - \mathring{\sum_F} \int_F \left\langle \llbracket \overline{\sff} \rrbracket_F,  \left.\sigma\right|_F \right\rangle v\omega_F + \mathring{\sum_S} \int_S \left\langle \Theta_S g|_S, \sigma|_S \right\rangle v \omega_S. \nonumber
\end{align}

We will now use integration by parts to rewrite the first three terms in a way that involves no derivatives of $\sigma$.

\begin{lemma} \label{lemma:IBPdivdivS}
For any $v \in V$, we have
\begin{align*}
\sum_T \int_T & (\dv\dv\mathbb{S}\sigma)v \omega_T - \mathring{\sum_F} \int_F \left\llbracket (\dv\mathbb{S}\sigma)(n) + \dv_F\left(\sigma(n,\cdot) \right) - H \sigma(n,n) \right\rrbracket_F v \omega_F 
\\& + \mathring{\sum_S} \int_S \sum_{F \supset S} \llbracket  \sigma(n,\tau) \rrbracket_F v\omega_S =  \sum_T \int_T \langle \mathbb{S}\sigma, \nabla\nabla v \rangle \omega - \sum_F \int_F \mathbb{S}\sigma(n,n) \llbracket \nabla_n v \rrbracket \omega_F.
\end{align*}
\end{lemma}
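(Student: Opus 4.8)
The plan is to integrate by parts twice in the volume term and once more \emph{tangentially} on each face, then match the resulting bulk and interface contributions against the remaining terms on both sides of the claimed identity. Write $\Sigma = \mathbb{S}\sigma$ for brevity. On a fixed $N$-simplex $T$, two applications of the divergence theorem give
\[
\int_T (\dv\dv\Sigma)\,v\,\omega_T = \int_T \langle \Sigma, \nabla\nabla v\rangle\,\omega_T + \int_{\partial T}\left(v\,(\dv\Sigma)(n) - \Sigma(n,\nabla v)\right)\omega_{\partial T},
\]
where $n$ is the outward unit normal on $\partial T$; the first integration applies the divergence theorem to $v\,(\dv\Sigma)^\sharp$ and the second to $\Sigma(\nabla v,\cdot)^\sharp$, using $\dv\!\left(\Sigma(\nabla v,\cdot)^\sharp\right) = \langle\dv\Sigma,\nabla v\rangle + \langle\Sigma,\nabla\nabla v\rangle$. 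Summing over $T$ immediately reproduces the Hessian term $\sum_T\int_T\langle\Sigma,\nabla\nabla v\rangle\omega$ on the right-hand side, so the whole problem reduces to showing that the $\partial T$-integrals, together with the face and edge terms on the left, collapse to the single interface term $-\sum_F\int_F\Sigma(n,n)\llbracket\nabla_n v\rrbracket\omega_F$.

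A structural observation makes the bookkeeping tractable: the coefficient $\Sigma(n,n)$ is single-valued across each face. Working in a $g$-orthonormal frame $n,\tau_1,\dots,\tau_{N-1}$ adapted to $F$, one has $\Tr\sigma = \sigma(n,n)+\sum_i\sigma(\tau_i,\tau_i)$, hence
\[
\Sigma(n,n) = \sigma(n,n) - \Tr\sigma = -\sum_{i=1}^{N-1}\sigma(\tau_i,\tau_i) = -\Tr\!\left(\left.\sigma\right|_F\right),
\]
which depends only on the single-valued tangential-tangential data $\left.\sigma\right|_F$ and $\left.g\right|_F$. This is exactly what renders the target interface term well defined, even though $\sigma(n,n)$ and $\nabla_n v$ individually jump across $F$.

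I would then collect the $\partial T$-integrals face by face, using $v\in H^1_0(\Omega)$ to discard boundary-face contributions (where $v$ and $\nabla_F v$ vanish) or to absorb them into the convention $\llbracket\,\cdot\,\rrbracket_F=\left.\cdot\,\right|_F$. The term $\sum_T\int_{\partial T}v\,(\dv\Sigma)(n)\,\omega_{\partial T}$ assembles into $\mathring\sum_F\int_F v\,\llbracket(\dv\Sigma)(n)\rrbracket_F\,\omega_F$ and cancels the $(\dv\Sigma)(n)$ contribution inside the face term on the left. Splitting $\nabla v = (\nabla_n v)\,n + \nabla_F v$, the normal part $-\sum_T\int_{\partial T}\Sigma(n,n)(\nabla_n v)\,\omega_{\partial T}$ assembles, thanks to the single-valuedness just noted, into precisely $-\sum_F\int_F\Sigma(n,n)\llbracket\nabla_n v\rrbracket\omega_F$, the desired right-hand side. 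What remains from the volume step is the tangential boundary piece $-\sum_T\int_{\partial T}\sigma(n,\nabla_F v)\,\omega_{\partial T}$, since $\Sigma(n,\nabla_F v)=\sigma(n,\nabla_F v)$ because $\nabla_F v$ is $g$-orthogonal to $n$.

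Finally I would integrate by parts tangentially: applying the surface divergence identity~(\ref{surfacestokes}) to the one-form $v\,\sigma(n,\cdot)$ on each face $F$, and using $\dv_F(v\,\sigma(n,\cdot)) = \sigma(n,\nabla_F v)+v\,\dv_F(\sigma(n,\cdot))$, turns $\int_F\sigma(n,\nabla_F v)\omega_F$ into $-\int_F v\left(\dv_F(\sigma(n,\cdot)) - H\sigma(n,n)\right)\omega_F + \int_{\partial F}v\,\sigma(n,\nu_F)\,\omega_{\partial F}$. Summed over interior faces, the first of these cancels the $\dv_F(\sigma(n,\cdot))-H\sigma(n,n)$ part of the face term on the left, while the codimension-$2$ integrals regroup, via $\partial F=\bigcup_{S\subset F}S$ and the incidence reversal $\mathring\sum_F\sum_{S\subset\partial F}=\mathring\sum_S\sum_{F\supset S}$, into the edge term $\mathring\sum_S\int_S\sum_{F\supset S}\llbracket\sigma(n,\tau)\rrbracket_F v\,\omega_S$. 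The main obstacle, and the step demanding the most care, is precisely this last regrouping: one must track the orientation of the outward conormal $\nu_F$ relative to the vector $\tau$ of Lemma~\ref{lemma:angledefectdot} (geometrically these are anti-parallel), reconcile this with the two-sided jump convention $\llbracket\,\cdot\,\rrbracket_F$ applied to $\sigma(n,\cdot)$, and keep the sign consistent with~(\ref{angledefectdot}) so that the edge contributions reproduce the angle-defect term \emph{exactly}. One must also verify that every boundary-face and boundary-edge term genuinely drops out because $v\in H^1_0(\Omega)$. Getting these orientations and signs to line up is the crux; the analytic content is otherwise just repeated application of the divergence theorem in the bulk and on the faces.
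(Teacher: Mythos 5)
Your proposal is correct and follows essentially the same route as the paper's proof, merely run in the opposite direction: the paper starts from the right-hand side $\sum_T\int_T\langle\mathbb{S}\sigma,\nabla\nabla v\rangle\omega - \sum_F\int_F\mathbb{S}\sigma(n,n)\llbracket\nabla_n v\rrbracket\omega_F$ and integrates by parts twice in the bulk and once on each face via~(\ref{surfacestokes}), using exactly the same ingredients you identify (the splitting $\nabla v=(\nabla_n v)n+\nabla_F v$, the identity $\mathbb{S}\sigma(n,\nabla_F v)=\sigma(n,\nabla_F v)$, the single-valuedness of $\mathbb{S}\sigma(n,n)=-\Tr(\left.\sigma\right|_F)$, the vanishing of boundary contributions from $v\in H^1_0(\Omega)$, and the incidence regrouping $\mathring{\sum}_F\sum_{S\subset\partial F}=\mathring{\sum}_S\sum_{F\supset S}$). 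The orientation bookkeeping for $\nu_F$ versus $\tau$ that you flag as the crux is handled no more explicitly in the paper than in your sketch, so there is no gap relative to the paper's own level of detail.
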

\begin{proof}
We have
\begin{align}
\sum_T &\int_T \langle \mathbb{S}\sigma, \nabla\nabla v \rangle \omega - \sum_F \int_F \mathbb{S}\sigma(n,n) \llbracket \nabla_n v \rrbracket \omega_F \\
&= \sum_T \bigg( \int_T \langle \mathbb{S} \sigma, \nabla \nabla v \rangle \omega -  \int_{\partial T} \mathbb{S}\sigma(n,n) \nabla_n v \, \omega_{\partial T} \bigg) \nonumber \\
&= \sum_T \bigg( \int_{\partial T} \mathbb{S}\sigma(n,\nabla v) \omega_{\partial T} - \int_T (\dv\mathbb{S}\sigma)(\nabla v) \omega -  \int_{\partial T} \mathbb{S}\sigma(n,n) \nabla_n v \, \omega_{\partial T} \bigg) \nonumber \\
&= \sum_T \bigg( \int_{\partial T} \mathbb{S}\sigma(n,\nabla v) \omega_{\partial T} - \int_{\partial T} (\dv\mathbb{S}\sigma)(n) v \omega_{\partial T} + \int_T (\dv\dv\mathbb{S}\sigma)v \omega \nonumber\\&\quad\quad - \int_{\partial T} \mathbb{S}\sigma(n,n) \nabla_n v \, \omega_{\partial T} \bigg). \label{IBP1}
\end{align}
Note that here we are regarding $\nabla v$ as a vector field rather than a one-form.
On each $N$-simplex $T$, we can write $\int_{\partial T} \mathbb{S}\sigma(n,\nabla v) \omega_{\partial T} - \int_{\partial T} \mathbb{S}\sigma(n,n) \nabla_n v \, \omega_{\partial T}$ as a sum of integrals over faces $F \subset \partial T$:
\begin{align*}
\int_{\partial T} \mathbb{S}\sigma(n,\nabla v) \omega_{\partial T} - \int_{\partial T} \mathbb{S}\sigma(n,n) \nabla_n v \, \omega_{\partial T} 
&= \sum_{F \subset \partial T} \int_F \mathbb{S}\sigma(n,\nabla v - n \nabla_n v) \omega_F \\
&= \sum_{F \subset \partial T} \int_F \mathbb{S}\sigma(n,\nabla_F v) \omega_F \\
&= \sum_{F \subset \partial T} \int_F \sigma(n,\nabla_F v) \omega_F.
\end{align*}
In the last line above, we used the fact that $\nabla_F v$ is $g$-orthogonal to $n$, so
\[
\mathbb{S}\sigma(n,\nabla_F v) = \sigma(n,\nabla_F v) - g(n,\nabla_F v) \Tr \sigma = \sigma(n,\nabla_F v).
\]
Each integral over $F$ can be integrated by parts as follows.  We have 
\[
\sigma(n,\nabla_F v) = \dv_F\left( \sigma(n,\cdot)v \right) - \dv_F\left(\sigma(n,\cdot)\right) v,
\]
so the identity~(\ref{surfacestokes}) applied to $\alpha = \sigma(n,\cdot)v$ implies that
\[
\int_F \sigma(n,\nabla_F v) \omega_F = \int_{\partial F} \sigma(n,\nu_F) v \omega_{\partial F} - \int_F \left( \dv_F\left(\sigma(n,\cdot)\right) - H\sigma(n,n)\right) v \omega_F.
\]
Now we insert this result into~(\ref{IBP1}) to get
\begin{align*}
\sum_T &\int_T \langle \mathbb{S}\sigma, \nabla\nabla v \rangle \omega - \sum_F \int_F \mathbb{S}\sigma(n,n) \llbracket \nabla_n v \rrbracket \omega_F \\
&= \sum_T \bigg( \sum_{F \subset \partial T} \int_{\partial F} \sigma(n,\nu_F) v \omega_{\partial F} - \sum_{F \subset \partial T}  \int_F \left( \dv_F\left(\sigma(n,\cdot)\right) - H\sigma(n,n)\right) v \omega_F \\  &\quad\quad - \int_{\partial T} (\dv\mathbb{S}\sigma)(n) v \omega_{\partial T} + \int_T (\dv\dv\mathbb{S}\sigma)v \omega \bigg).
\end{align*}
The first term can be re-expressed as a sum over interior $(N-2)$-simplices $S$ using our notation from Lemma~\ref{lemma:angledefectdot}, and the next two terms can be re-expressed in terms of jumps across interior faces $F$.  (Integrals over $(N-2)$-simplices $S \subset \partial\Omega$ and $(N-1)$-simplices $F \subset \partial\Omega$ vanish because $v=0$ on $\partial\Omega$.)  The result is 
\begin{align*}
\sum_T &\int_T \langle \mathbb{S}\sigma, \nabla\nabla v \rangle \omega - \sum_F \int_F \mathbb{S}\sigma(n,n) \llbracket \nabla_n v \rrbracket \omega_F = \mathring{\sum_S}  \int_S \sum_{F \supset S} \llbracket \sigma(n,\tau) \rrbracket_F v \omega_S \\
& - \mathring{\sum_F} \int_F \left\llbracket \dv_F\left(\sigma(n,\cdot)\right) - H\sigma(n,n) +(\dv\mathbb{S}\sigma)(n)  \right\rrbracket v \omega_F + \sum_T \int_T (\dv\dv\mathbb{S}\sigma)v \omega.
\end{align*}
\end{proof}

\begin{remark}
Many of the above calculations are similar to the ones in~\cite[Proposition 4.2]{berchenko2022finite}, except that here we are in dimension $N$ rather than $2$.
\end{remark}

We can now state the main result of this subsection.

\begin{theorem} \label{thm:distcurvdot}
Let $g(t)$ be a family of Regge metrics with time derivative $\frac{\partial}{\partial t}g =: \sigma$. For every $v \in V$, we have
\begin{equation} \label{distcurvdot}
\frac{d}{dt} \langle (R\omega)_{\rm dist}(g(t)), v \rangle_{V',V} 
= b_h(g; \sigma, v) - a_h(g; \sigma, v),
\end{equation}
where
\begin{align*}
b_h(g;\sigma,v) &= \sum_T \int_T \langle \mathbb{S}\sigma, \nabla\nabla v \rangle \omega - \sum_F \int_{F} \mathbb{S}\sigma(n,n) \llbracket \nabla_n v \rrbracket_F \omega_F, \\
a_h(g;\sigma,v) &= \sum_T \int_T \langle G, \sigma \rangle v \omega_T + \mathring{\sum_F} \int_F \left\langle \llbracket \overline{\sff} \rrbracket_F,  \left.\sigma\right|_F \right\rangle v \omega_F - \mathring{\sum_S} \int_S \left\langle \Theta_S g|_S, \sigma|_S \right\rangle v \omega_S.
\end{align*}
\end{theorem}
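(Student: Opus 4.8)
The plan is to differentiate the three sums in the definition~(\ref{distcurv}) of $(R\omega)_{\rm dist}$ term by term, substitute the evolution formulas already derived in Section~\ref{sec:evolution}, and then reorganize the result into the claimed split $b_h - a_h$. Since $v \in V$ is time-independent and the triangulation $\mathcal{T}$ is fixed, the time derivative commutes with each integral, so it suffices to differentiate the integrands $R_T v\omega_T$, $\llbracket H\rrbracket_F v\omega_F$, and $\Theta_S v\omega_S$ pointwise.

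For the first family of terms, Proposition~\ref{prop:curvdot} gives $\frac{\partial}{\partial t}(R_T\omega_T) = (\dv\dv\mathbb{S}\sigma)\omega_T - \langle G,\sigma\rangle\omega_T$ on each $N$-simplex. For the second, Proposition~\ref{prop:Homegadot}, applied on either side of each interior face and summed into a jump, yields exactly~(\ref{Hdot2terms}). For the third, Lemma~\ref{lemma:angledefectdot} gives the derivative of $\Theta_S\omega_S$ in terms of the jump $\llbracket\sigma(n,\tau)\rrbracket_F$ and the trace term $\Theta_S\Tr(\sigma|_S)$, the latter of which I rewrite as $\langle \Theta_S g|_S,\sigma|_S\rangle$. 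Collecting these three contributions produces precisely the formula~(\ref{distcurvdot0}). At this point the right-hand side is already separated into two natural groups: the first line of~(\ref{distcurvdot0}), containing $\dv\dv\mathbb{S}\sigma$, the face jumps of $(\dv\mathbb{S}\sigma)(n)+\dv_F(\sigma(n,\cdot))-H\sigma(n,n)$, and the edge jumps of $\sigma(n,\tau)$, carries all the derivatives of $\sigma$, while the second line collects the undifferentiated curvature terms.

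The final step is to recognize these two groups as $b_h$ and $-a_h$. The second group is, term for term, the negative of the defining expression for $a_h(g;\sigma,v)$, so it equals $-a_h(g;\sigma,v)$ immediately. For the first group I invoke Lemma~\ref{lemma:IBPdivdivS}, which, through integration by parts on each simplex, the surface-divergence identity~(\ref{surfacestokes}), and the reassembly of boundary contributions into interior-face and interior-edge jumps (using $v=0$ on $\partial\Omega$), shows that this group equals $\sum_T\int_T\langle\mathbb{S}\sigma,\nabla\nabla v\rangle\omega - \sum_F\int_F\mathbb{S}\sigma(n,n)\llbracket\nabla_n v\rrbracket_F\omega_F$, which is exactly $b_h(g;\sigma,v)$. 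Combining the two identifications gives~(\ref{distcurvdot}).

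Since the substantive analysis is already packaged into Propositions~\ref{prop:curvdot} and~\ref{prop:Homegadot} and Lemmas~\ref{lemma:angledefectdot} and~\ref{lemma:IBPdivdivS}, the proof of the theorem is essentially an assembly. The step that requires the most care is the bookkeeping when differentiating the face and edge terms: one must track the orientation of the normals $n^\pm$ on the two sides of each interior face so that the single-sided formula of Proposition~\ref{prop:Homegadot} sums correctly into the jump $\llbracket\cdot\rrbracket_F$, and one must verify that the per-simplex boundary integrals generated by the integration by parts in Lemma~\ref{lemma:IBPdivdivS} recombine into exactly the interior-face and interior-edge jump terms appearing in~(\ref{distcurvdot0}), with the boundary simplices dropping out by virtue of $v\in H^1_0(\Omega)$.
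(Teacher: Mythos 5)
Your proposal is correct and follows essentially the same route as the paper: the paper likewise derives~(\ref{distcurvdot0}) by term-by-term differentiation using Propositions~\ref{prop:curvdot} and~\ref{prop:Homegadot} and Lemma~\ref{lemma:angledefectdot}, and then its entire proof of Theorem~\ref{thm:distcurvdot} is to combine~(\ref{distcurvdot0}) with Lemma~\ref{lemma:IBPdivdivS}, exactly as you do. Your added remarks about normal orientations and the vanishing of boundary contributions via $v \in H^1_0(\Omega)$ match the bookkeeping the paper carries out inside Lemma~\ref{lemma:IBPdivdivS}.
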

\begin{proof}
Combine~(\ref{distcurvdot0}) with Lemma~\ref{lemma:IBPdivdivS}.
\end{proof}

\subsection{Distributional densitized Einstein tensor} \label{sec:einstein}

We now pause to make a few remarks about the bilinear forms $a_h(g;\cdot,\cdot)$ and $b_h(g;\cdot,\cdot)$ appearing in Theorem~\ref{thm:distcurvdot}.  These remarks will play no role in our analysis, but they help to elucidate the content of Theorem~\ref{thm:distcurvdot}.  The reader can safely skip ahead to Section~\ref{sec:convergence} if desired.  

Numerical analysts will likely recognize the bilinear form $b_h(g;\cdot,\cdot)$ appearing in Theorem~\ref{thm:distcurvdot}.  As we mentioned in Section~\ref{sec:intro}, it is (up to the appearance of $\mathbb{S}$) a non-Euclidean, $N$-dimensional generalization of a bilinear form that appears in the Hellan-Herrmann-Johnson finite element method~\cite{babuska1980analysis,arnold1985mixed,brezzi1977mixed,braess2018two,braess2019equilibration,arnold2020hellan,pechstein2017tdnns,chen2018multigrid}.  It can be regarded as the integral of $\dv\dv\mathbb{S} \sigma$ against $v$, where $\dv\dv$ is interpreted in a distributional sense.

The bilinear form $a_h(g;\cdot,\cdot)$ can be understood by comparing Theorem~\ref{thm:distcurvdot} with Proposition~\ref{prop:curvdot}, which, when integrated against a continuous function $v$, states that for a family of smooth Riemannian metrics $g(t)$ with scalar curvature $R$,
\begin{equation} \label{intcurvdot}
\frac{d}{dt} \int_\Omega R v \omega = \int_\Omega (\dv\dv\mathbb{S}\sigma) v \omega - \int_\Omega \langle G, \sigma \rangle v \omega,
\end{equation}
where $\sigma = \frac{\partial}{\partial t}g$ and $G=\Ric - \frac{1}{2}Rg$ is the Einstein tensor associated with $g$.  A comparison of~(\ref{intcurvdot}) with~(\ref{distcurvdot}) suggests that for a Regge metric $g$, the bilinear form $a_h(g;\sigma,v)$ should be regarded as a distributional counterpart of $\int_\Omega \langle G, \sigma \rangle v \omega$.

This motivates the following definition.  Fix a number $s >1$, and let $\Sigma$ denote the space of square-integrable symmetric $(0,2)$-tensor fields $\sigma$ with the following properties: the restriction of $\sigma$ to each $T \in \mathcal{T}^N$ belongs to $H^s(T)$, and the tangential-tangential components of $\sigma$ along any face $F \in \mathring{\mathcal{T}}^{N-1}$ are single-valued.  Note that these conditions imply that the tangential-tangential components of $\sigma$ along any $S \in \mathring{\mathcal{T}}^{N-2}$ are well-defined and single-valued as well.
\begin{definition} \label{def:distein}
Let $g$ be a Regge metric.  The \emph{distributional densitized Einstein tensor} associated with $g$ is the linear functional $(G\omega)_{\rm dist}(g) \in \Sigma'$ defined by
\[
\begin{split}
\langle (G\omega)_{\rm dist}(g), \sigma \rangle_{\Sigma',\Sigma}
&=
\sum_T \int_T \langle G, \sigma \rangle \omega_T + \mathring{\sum_F} \int_F \left\langle \llbracket \overline{\sff} \rrbracket_F,  \left.\sigma\right|_F \right\rangle \omega_F - \mathring{\sum_S} \int_S \left\langle \Theta_S g|_S, \sigma|_S \right\rangle \omega_S, \quad \forall \sigma \in \Sigma.
\end{split}
\]
\end{definition}
\begin{remark}
In dimension $N=2$, we have $(G\omega)_{\rm dist}(g)=0$ for any Regge metric $g$, because $G$ vanishes within each triangle, $\bar{\sff}$ vanishes on each edge, and the restriction of $\sigma$ to each vertex vanishes.
\end{remark}
\begin{remark} \label{remark:israel}
The appearance of the trace-reversed second fundamental form $\overline{\sff}$ in Definition~\ref{def:distein} is quite natural.  The same quantity arises in studies of singular sources in general relativity, with the jump in $\overline{\sff}$ encoding the well-known \emph{Israel junction conditions} across a hypersurface on which stress-energy is concentrated~\cite{israel1966singular}.
\end{remark}
\begin{remark}
If we define a map $(\dv\dv\mathbb{S})_{\rm dist} : \Sigma \to V'$ by
\[
\langle (\dv\dv\mathbb{S})_{\rm dist} \sigma, v \rangle_{V',V} = b_h(g;\sigma,v), \quad \forall v \in V,
\]
then, by construction, we have
\[
\left.\frac{d}{dt}\right|_{t=0} \langle (R\omega)_{\rm dist}(g+t\sigma), v \rangle_{V',V} = \langle (\dv\dv\mathbb{S})_{\rm dist} \sigma, v \rangle_{V',V} - \langle (G\omega)_{\rm dist}(g), v\sigma \rangle_{\Sigma',\Sigma}
\]
for every piecewise smooth $\sigma \in \Sigma$ and every smooth function $v$ with compact support in $\Omega$.  In particular, suppose that $\Omega$ has no boundary (e.g., suppose that $\Omega$ is an $N$-dimensional cube and we identify its opposing faces). Then $b_h(g;\sigma,1)=0$ and
\[
\left.\frac{d}{dt}\right|_{t=0} \langle (R\omega)_{\rm dist}(g+t\sigma), 1 \rangle_{V',V} = - \langle (G\omega)_{\rm dist}(g), \sigma \rangle_{\Sigma',\Sigma}
\]
for every piecewise smooth $\sigma \in \Sigma$.  This implies that a Regge metric $g$ is a stationary point of $\langle (R\omega)_{\rm dist}(g), 1 \rangle_{\Sigma',\Sigma}$ if its distributional densitized Einstein tensor vanishes: $(G\omega)_{\rm dist}(g) = 0$.

The functional $\langle (R\omega)_{\rm dist}(g), 1 \rangle_{\Sigma',\Sigma}$ is a counterpart of the Einstein-Hilbert functional $\int_\Omega R\omega$ from general relativity, whose stationary points are solutions to the (vacuum) Einstein field equations $G=0$.  It reduces to the Regge action from Regge calculus when $g$ is piecewise constant.  That is,
\[
\langle (R\omega)_{\rm dist}(g), 1 \rangle_{\Sigma',\Sigma} = 2 \mathring{\sum_S} \Theta_S V_S, \quad \text{ if $g$ is piecewise constant, }
\]
where $V_S = \int_S \omega_S$ denotes the volume of $S$.  If $g$ varies with $t$ and remains piecewise constant for all $t$, then
\[
\frac{d}{dt} 2 \mathring{\sum_S} \Theta_S V_S = 2 \mathring{\sum_S} \dot{\Theta}_S V_S + 2 \mathring{\sum_S} \Theta_S \dot{V}_S,
\]
and one checks that (on a domain without boundary)
\[
2 \mathring{\sum_S} \dot{\Theta}_S V_S = b_h(g;\sigma,1) = 0
\]
and
\[
2 \mathring{\sum_S} \Theta_S \dot{V}_S = -a_h(g;\sigma,1) = -\langle (G\omega)_{\rm dist}(g), \sigma \rangle_{\Sigma',\Sigma},
\]
where $\sigma = \frac{\partial}{\partial t}g$.  The fact that $\mathring{\sum}_S \dot{\Theta}_S V_S=0$ for any piecewise constant Regge metric $g$ (on a domain without boundary) was proved in Regge's classic paper~\cite{regge1961general} using very different techniques.
\end{remark}
\begin{remark}
If $g$ is a Regge metric and $\sigma=gv$ for some smooth function  $v$ with compact support in $\Omega$, then:
\begin{enumerate}
\item On each $N$-simplex $T$, we have
\[
\langle G, \sigma \rangle = \langle G, g \rangle v =  (\Tr G)v = -\left( \frac{N-2}{2} \right) Rv.
\]
\item On either side of each interior $(N-1)$-simplex $F$, we have:
\begin{align*}
\left\langle \overline{\sff} ,  \left.\sigma\right|_F \right\rangle
&= \left\langle \sff,  \left.g\right|_F \right\rangle v - \left\langle \left. g \right|_F,  \left.g\right|_F \right\rangle H v \\
&= Hv - (N-1)Hv \\
&= -(N-2)Hv.
\end{align*}
\item On each interior $(N-2)$-simplex $S$, we have
\[
\left\langle \Theta_S g|_S, \sigma|_S \right\rangle = \Theta_S v \Tr(\left.g\right|_S) = (N-2) \Theta_S v.
\]
\end{enumerate}
This shows that
\begin{align*}
\langle (G\omega)_{\rm dist}(g), gv \rangle_{\Sigma',\Sigma} 
&= -\left(\frac{N-2}{2}\right) \left( \sum_T \int_T R_T v \omega_T + 2\mathring{\sum_F} \int_F \llbracket H \rrbracket_F v \omega_F + 2\mathring{\sum_S} \int_S \Theta_S v \omega_S \right) \\
&= -\left(\frac{N-2}{2}\right) \langle (R\omega)_{\rm dist}(g), v \rangle_{V',V}
\end{align*}
for every smooth function $v$ with compact support in $\Omega$.  
One can interpret this as saying that the trace of $(G\omega)_{\rm dist}(g)$ is $-\left(\frac{N-2}{2}\right)(R\omega)_{\rm dist}(g)$.
\end{remark}
\begin{remark}
If $g$ is a piecewise constant Regge metric and $\sigma \in \Sigma$ is piecewise constant, then
\[
\langle (G\omega)_{\rm dist}(g), \sigma \rangle_{\Sigma',\Sigma}  = -\mathring{\sum_S} \int_S \Theta_S \Tr(\sigma|_S) \omega_S.
\]
If we linearize around the Euclidean metric $g=\delta$, then we see from~(\ref{angledefectdot}) that
\begin{align*}
\left.\frac{d}{dt}\right|_{t=0} \left\langle (G\omega)_{\rm dist}(\delta+t\rho), \sigma \right\rangle_{\Sigma',\Sigma}
&= -\mathring{\sum_S} \int_S \dot{\Theta}_S \Tr(\sigma|_S) \omega_S \\
&= -\frac{1}{2} \mathring{\sum_S} \int_S \sum_{F \supset S} \llbracket \rho(n,\tau) \rrbracket_F \Tr(\sigma|_S) \omega_S
\end{align*}
for every piecewise constant $\rho,\sigma \in \Sigma$.  (Note that there are no additional terms on the right-hand side because $\Theta_S=0$ at $t=0$.)  Hence, if $\Omega$ has no boundary, then
\begin{align*}
\left.\frac{d^2}{dt^2}\right|_{t=0} \langle (R\omega)_{\rm dist}(\delta + t\sigma), 1 \rangle_{V',V}
&= -\left.\frac{d}{dt}\right|_{t=0} \left\langle (G\omega)_{\rm dist}(\delta+t\sigma), \sigma \right\rangle_{\Sigma',\Sigma}  \\
&= \frac{1}{2} \mathring{\sum_S} \int_S \sum_{F \supset S} \llbracket \sigma(n,\tau) \rrbracket_F \Tr(\sigma|_S) \omega_S
\end{align*}
for every piecewise constant $\sigma \in \Sigma$.  This is equivalent to Christiansen's formula~\cite[Theorem 2 and Equations (25-26)]{christiansen2011linearization} for the second variation of the Regge action around the Euclidean metric in dimension $N=3$.  (There, the Regge action is taken to be $\frac{1}{2} \langle (R\omega)_{\rm dist}(g), 1 \rangle_{V',V}$ rather than $\langle (R\omega)_{\rm dist}(g), 1 \rangle_{V',V}$.)

\end{remark}

\section{Convergence} \label{sec:convergence}

In this section, we prove a convergence result for the distributional densitized scalar curvature in the norm
\begin{equation} \label{2norm}
\|u\|_{H^{-2}(\Omega)} = \sup_{\substack{v \in H^2_0(\Omega), \\ v \neq 0}} \frac{ \langle u, v \rangle_{H^{-2}(\Omega),  H^2_0(\Omega)} }{ \|v\|_{H^2(\Omega)} }.
\end{equation}
Our convergence result will be applicable to a family $\{g_h\}_{h>0}$ of Regge metrics defined on a shape-regular family $\{\mathcal{T}_h\}_{h>0}$ of triangulations of $\Omega$ parametrized by $h = \max_{T \in \mathcal{T}_h^N} h_T$, where $h_T = \operatorname{diam}(T)$.   Shape-regularity means that there exists a constant $C_0$ independent of $h$ such that
\[
\max_{T \in \mathcal{T}_h^N} \frac{h_T}{\rho_T} \le C_0
\]
for all $h>0$, where $\rho_T$ denotes the inradius of $T$.  

\begin{theorem} \label{thm:conv}
Let $\Omega \subset \mathbb{R}^N$ be a polyhedral domain equipped with a smooth Riemannian metric $g$.  Let $\{g_h\}_{h>0}$ be a family of Regge metrics defined on a shape-regular family $\{\mathcal{T}_h\}_{h>0}$ of triangulations of $\Omega$.  Assume that $\lim_{h \to 0} \|g_h-g\|_{L^\infty(\Omega)} = 0$ and $C_1 := \sup_{h>0} \max_{T \in \mathcal{T}_h^N} \|g_h\|_{W^{1,\infty}(T)} < \infty$.  The following statements hold:
\begin{enumerate}[label=(\roman*)]
\item \label{thm:conv:part1} If $N=2$, then there exist positive constants $C$ and $h_0$ such that
\begin{equation} \label{convN2}
\begin{split}
\| (R\omega)_{\rm dist}(g_h) - (R\omega)(g)\|_{H^{-2}(\Omega)} &\le C \left( 1 + \max_T  h_T^{-1} \| g_h - g \|_{L^\infty(T)} + \max_T |g_h-g|_{W^{1,\infty}(T)}  \right) \\
&\quad\times \left( \|g_h-g\|_{L^2(\Omega)}^2 + \sum_T h_T^2 |g_h-g|_{H^1(T)}^2 \right)^{1/2}
\end{split}
\end{equation}
for all $h \le h_0$.  The constants $C$ and $h_0$ depend on $\|g\|_{W^{1,\infty}(\Omega)}$, $\|g^{-1}\|_{L^\infty(\Omega)}$, $C_0$, and $C_1$.
\item \label{thm:conv:part2} If $N \ge 3$, assume additionally that $C_2 := \sup_{h>0} \max_{T \in \mathcal{T}_h^N} |g_h|_{W^{2,\infty}(T)} < \infty$.
Then there exist positive constants $C$ and $h_0$ such that
\begin{equation} \label{convN3}
\begin{split}
\| (R\omega)_{\rm dist}(g_h) - (R\omega)(g)&\|_{H^{-2}(\Omega)} \le C \left( 1 + \max_T h_T^{-2} \|g_h-g\|_{L^\infty(T)} + \max_T h_T^{-1} |g_h-g|_{W^{1,\infty}(T)}  \right) \\
&\quad\times \left( \|g_h-g\|_{L^2(\Omega)}^2 + \sum_T h_T^2 |g_h-g|_{H^1(T)}^2 + \sum_T h_T^4 |g_h-g|_{H^2(T)}^2 \right)^{1/2}
\end{split}
\end{equation}
for all $h \le h_0$.  The constants $C$ and $h_0$ depend on $N$, $\|g\|_{W^{1,\infty}(\Omega)}$, $\|g^{-1}\|_{L^\infty(\Omega)}$, $C_0$, $C_1$, and $C_2$.
\end{enumerate}
\end{theorem}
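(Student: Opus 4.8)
The plan is to start from the integral representation (\ref{integralformula}) of the error, which is obtained by integrating the differential identity of Theorem~\ref{thm:distcurvdot} in $t$ from $0$ to $1$ along the path $\gt(t)=(1-t)g+tg_h$ and using that $g$ is smooth, so that $(R\omega)_{\rm dist}(g)=(R\omega)(g)$. Since $H^2_0(\Omega)\subset V$, the norm (\ref{2norm}) is computed by testing against $v\in H^2_0(\Omega)$, and with $\sigma=g_h-g$ it suffices to bound
\[
\left| \int_0^1 b_h(\gt(t);\sigma,v) - a_h(\gt(t);\sigma,v)\,dt \right| \le \int_0^1 \left( |b_h(\gt(t);\sigma,v)| + |a_h(\gt(t);\sigma,v)| \right) dt
\]
by the right-hand side of (\ref{convN2}) or (\ref{convN3}) times $\|v\|_{H^2(\Omega)}$, with constants uniform in $t\in[0,1]$. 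Before estimating the forms I would record the relevant uniform facts. Because $\|g_h-g\|_{L^\infty}\to 0$ and $g$ is bounded below, for $h$ small $\gt(t)$ is uniformly positive definite with $\|\gt(t)\|_{W^{1,\infty}(T)}$ (and, for $N\ge 3$, $\|\gt(t)\|_{W^{2,\infty}(T)}$) bounded in terms of $\|g\|_{W^{1,\infty}}$, $C_1$ (and $C_2$), so every $\gt$-dependent object --- inverse metric, volume forms $\omega,\omega_F,\omega_S$, unit normals, Christoffel symbols, $G$, $\sff$ --- is uniformly controlled and the $\gt$-weighted $L^2$ norms are uniformly equivalent to Euclidean ones.

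The crucial structural observation is that every jump quantity entering $b_h$ and $a_h$ vanishes when the metric is globally smooth; since $g$ is smooth, each jump is driven by $\sigma$. Concretely, tangential-tangential continuity of $g_h$ together with smoothness of $g$ gives $\llbracket\gt\rrbracket_F=t\llbracket\sigma\rrbracket_F$ and $\llbracket\partial\gt\rrbracket_F=t\llbracket\partial\sigma\rrbracket_F$, whence $\|n^++n^-\|_{L^\infty(F)}\lesssim t\|\sigma\|_{L^\infty}$, $\|\llbracket\overline{\sff}\rrbracket_F\|_{L^\infty(F)}\lesssim t(\|\sigma\|_{L^\infty}+|\sigma|_{W^{1,\infty}})$, and, using (\ref{angledefectdot}) and $\Theta_S(0)=0$, $\|\Theta_S\|_{L^\infty(S)}\lesssim\|\sigma\|_{L^\infty}$. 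I would then bound $b_h$ by splitting it into its volume and face contributions. Writing the covariant Hessian as $\nabla\nabla v=\Hess v-\Gamma(\gt)\cdot\nabla v$, the Euclidean-Hessian part of the volume term gives $\|\sigma\|_{L^2}|v|_{H^2}$ with an $O(1)$ constant, while the Christoffel part produces the factor $(1+\max_T|g_h-g|_{W^{1,\infty}(T)})$ since $\Gamma(\gt)-\Gamma(g)$ is Lipschitz in $(\sigma,\partial\sigma)$. The face term is handled by Cauchy--Schwarz on $F$, a scaled codimension-1 trace inequality for $\sigma$ (yielding the combination $\|\sigma\|_{L^2(T)}^2+h_T^2|\sigma|_{H^1(T)}^2$), and the bound $\|\llbracket\nabla_n v\rrbracket\|_{L^2(F)}=\|dv(n^++n^-)\|_{L^2(F)}\lesssim\|\sigma\|_{L^\infty}\|dv\|_{L^2(F)}$ with a trace inequality for $v$; this generates the factor $\max_T h_T^{-1}\|g_h-g\|_{L^\infty(T)}$. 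At this step I would invoke Proposition~\ref{prop:equiv} to control the resulting mesh-dependent, $\gt$-weighted seminorm of $v$ by $\|v\|_{H^2(\Omega)}$ uniformly in $t$; this near-equivalence of norms is exactly what prevents the loss of a power of $h$.

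For $a_h$, which vanishes identically when $N=2$, I would estimate its three pieces separately in the case $N\ge 3$. The Einstein term $\sum_T\int_T\langle G,\sigma\rangle v\,\omega_T$ is bounded by $\|G(\gt)\|_{L^\infty}\|\sigma\|_{L^2}\|v\|_{L^2}\lesssim\|g_h-g\|_{L^2}\|v\|_{H^2}$ with an $O(1)$ constant, using the uniform $W^{2,\infty}$ bound (hence the hypothesis $C_2<\infty$) and Poincar\'e's inequality. The $\overline{\sff}$-jump face term, estimated via the jump bound above and codimension-1 trace inequalities for $\sigma$ and $v$, contributes the factor $\max_T h_T^{-1}|g_h-g|_{W^{1,\infty}(T)}$. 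The angle-defect term $\mathring{\sum_S}\int_S\Theta_S\,\Tr(\sigma|_S)\,v\,\omega_S$ requires \emph{codimension-2} trace inequalities for both $\sigma$ and $v$; the double trace loses $h^{-2}$, and combined with $\|\Theta_S\|_{L^\infty}\lesssim\|\sigma\|_{L^\infty}$ it yields the factor $\max_T h_T^{-2}\|g_h-g\|_{L^\infty(T)}$, while the sharp bound $h_T^2\|\sigma\|_{L^2(S)}^2\lesssim\|\sigma\|_{L^2(T)}^2+h_T^2|\sigma|_{H^1(T)}^2+h_T^4|\sigma|_{H^2(T)}^2$ is precisely what introduces the extra $\sum_T h_T^4|g_h-g|_{H^2(T)}^2$ summand appearing in (\ref{convN3}). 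Collecting all contributions, checking that they are dominated by the stated prefactors, integrating in $t$ (all constants being $t$-uniform), and taking the supremum over $v$ yields (\ref{convN2}) and (\ref{convN3}).

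I expect the main obstacle to be twofold. First, obtaining the $b_h$ bound without losing a power of $h$ and with the sharp prefactors: this is where the near-equivalence of the $\gt$-weighted and Euclidean mesh-dependent norms (Proposition~\ref{prop:equiv}) is indispensable, and care is needed to track the Christoffel difference so that $|g_h-g|_{W^{1,\infty}}$ rather than $\|g_h\|_{W^{1,\infty}}$ appears. Second, the sharp bookkeeping of the angle-defect term through the codimension-2 trace inequality, since it is this term alone that forces the $\max_T h_T^{-2}\|g_h-g\|_{L^\infty(T)}$ prefactor and thereby accounts for the degradation in high dimensions --- explaining, in particular, why no $H^{-2}$-convergence is predicted when $N\ge 3$ and the interpolant is only piecewise constant.
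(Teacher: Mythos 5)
Your proposal follows essentially the same route as the paper's proof: the integral error representation from Theorem~\ref{thm:distcurvdot} along $\gt(t)=(1-t)g+tg_h$, the estimate of $b_h$ by duality against the mesh-dependent norms combined with the near-equivalence of Proposition~\ref{prop:equiv} (using that $\llbracket dv(n_g)\rrbracket=0$ for $v\in H^2_0(\Omega)$ and smooth $g$), and the term-by-term treatment of $a_h$ with codimension-1 and codimension-2 trace inequalities, the latter producing exactly the $\max_T h_T^{-2}\|g_h-g\|_{L^\infty(T)}$ prefactor and the $\sum_T h_T^4|g_h-g|_{H^2(T)}^2$ summand. The only cosmetic deviation is that you bound $\Theta_S(\gt)$ by integrating~(\ref{angledefectdot}) from $\Theta_S(g)=0$, whereas the paper compares $\cos\theta_{ST}(\gt)$ with $\cos\theta_{ST}(g)$ and invokes uniform dihedral-angle bounds (Lemma~\ref{lemma:anglebound} and its $g$-metric analogue); both yield the same $O(\|g_h-g\|_{L^\infty})$ estimate.
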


The above theorem leads immediately to error estimates of optimal order for piecewise polynomial interpolants of $g$ having degree $r \ge 0$, provided that either $N = 2$ or $r \ge 1$.  To make this statement precise, we introduce a definition.  Recall that the Regge finite element space of degree $r \ge 0$ consists of symmetric $(0,2)$-tensor fields on $\Omega$ that are piecewise polynomial of degree at most $r$ and possess single-valued tangential-tangential components on interior $(N-1)$-simplices. 

\begin{definition} \label{def:optimalorder}
Let $\mathcal{I}_h$ be a map that sends smooth symmetric $(0,2)$-tensor fields on $\Omega$ to the Regge finite element space of degree $r \ge 0$.   We say that $\mathcal{I}_h$ is an \emph{optimal-order interpolation operator} of degree $r$ if there exists a number $m \in \{0,1,\dots,N\}$ and a constant $C_3=C_3(N,r,h_T/\rho_T,t,s)$ such that for every $p \in [1,\infty]$, every $s \in (m/p,r+1]$, every $t \in [0,s]$, and every symmetric $(0,2)$-tensor field $g$ possessing $W^{s,p}(\Omega)$-regularity, $\mathcal{I}_h g$ exists (upon continuously extending $\mathcal{I}_h$) and satisfies
\begin{equation} \label{optimalorder}
|\mathcal{I}_h g - g|_{W^{t,p}(T)} \le C_3 h_T^{s-t} |g|_{W^{s,p}(T)}
\end{equation}
for every $T \in \mathcal{T}_h^N$.  We call the number $m$ the \emph{codimension index} of $\mathcal{I}_h$.  A Regge metric $g_h$ is called an \emph{optimal-order interpolant} of $g$ having degree $r$ and codimension index $m$ if it is the image of a Riemannian metric $g$ under an  optimal-order interpolation operator having degree $r$ and codimension index $m$.  
\end{definition}

An example of an optimal-order interpolation operator is the canonical interpolation operator onto the degree-$r$ Regge finite element space introduced in~\cite[Chapter 2]{li2018regge}.   Its degrees of freedom involve integrals over simplices of codimension at most $N-1$, so its action on a tensor field $g$ is well-defined so long as $g$ admits traces on simplices of codimension at most $N-1$, i.e.~$g$ possesses $W^{s,p}(\Omega)$-regularity with $s>(N-1)/p$.  Correspondingly, its codimension index is $m=N-1$.

\begin{corollary} \label{cor:conv}
Let $\Omega$, $g$, and $\{\mathcal{T}_h\}_{h>0}$ be as in Theorem~\ref{thm:conv}.  Let $\{g_h\}_{h>0}$ be a family of optimal-order interpolants of $g$ having degree $r \ge 0$ and codimension index $m$. If $N \ge 3$, assume that $r \ge 1$. Then there exist positive constants $C$ and $h_0$ such that
\[
\| (R\omega)_{\rm dist}(g_h) - (R\omega)(g)\|_{H^{-2}(\Omega)} \le C \left( \sum_T h_T^{p(r+1)} |g|_{W^{r+1,p}(T)}^p \right)^{1/p}
\]
for all $h \le h_0$ and all $p \in [2,\infty]$ satisfying $p > \frac{m}{r+1}$.  (We interpret the right-hand side as $C\max_T h_T^{r+1} |g|_{W^{r+1,\infty}(T)}$ if $p=\infty$.)  The constants $C$ and $h_0$ depend on the same quantities listed in~\ref{thm:conv:part1} (if $N=2$) and~\ref{thm:conv:part2} (if $N \ge 3$), as well as on $\Omega$, $r$, and (if $N \ge 3$) $|g|_{W^{2,\infty}(\Omega)}$.
\end{corollary}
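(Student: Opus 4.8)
The plan is to obtain the corollary directly from Theorem~\ref{thm:conv} by checking that an optimal-order interpolant meets the theorem's hypotheses and then estimating the right-hand side of \eqref{convN2} (if $N=2$) or \eqref{convN3} (if $N\ge3$) with the interpolation bound \eqref{optimalorder}. Throughout I fix $p\in[2,\infty]$ with $p>m/(r+1)$ and use that $g$ is smooth, so that $|g|_{W^{r+1,\infty}(\Omega)}<\infty$.

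First I would verify the standing hypotheses. Taking $t=0$, $s=r+1$, and integrability index $\infty$ in \eqref{optimalorder} gives $\|g_h-g\|_{L^\infty(T)}\le C_3 h_T^{r+1}|g|_{W^{r+1,\infty}(T)}\to0$, hence $\lim_{h\to0}\|g_h-g\|_{L^\infty(\Omega)}=0$. Choosing $t=1$ and $t=2$ (with $s=r+1$, index $\infty$) bounds $|g_h-g|_{W^{1,\infty}(T)}$ by $C_3 h_T^{r}|g|_{W^{r+1,\infty}(T)}$ and $|g_h-g|_{W^{2,\infty}(T)}$ by $C_3 h_T^{r-1}|g|_{W^{r+1,\infty}(T)}$; together with the triangle inequality against $\|g\|_{W^{1,\infty}(\Omega)}$ and $|g|_{W^{2,\infty}(\Omega)}$ these give $C_1<\infty$ and $C_2<\infty$. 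The estimate for $C_2$ requires $t=2$ in \eqref{optimalorder}, which is admissible only when $s=r+1\ge2$; this is precisely the origin of the assumption $r\ge1$ when $N\ge3$. The same $\infty$-index estimates also control the prefactors: each term $h_T^{-1}\|g_h-g\|_{L^\infty(T)}$, $|g_h-g|_{W^{1,\infty}(T)}$ (for $N=2$) and $h_T^{-2}\|g_h-g\|_{L^\infty(T)}$, $h_T^{-1}|g_h-g|_{W^{1,\infty}(T)}$ (for $N\ge3$) is dominated by $C_3 h_T^{r}|g|_{W^{r+1,\infty}(T)}$ or $C_3 h_T^{r-1}|g|_{W^{r+1,\infty}(T)}$, which stays bounded under the running degree hypothesis. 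Hence the prefactor is at most a constant uniformly in $h$.

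The main step is to bound the $L^2$-type factor by the $\ell^p$-type right-hand side. On each simplex I would first convert $L^2$ to $L^p$ via the elementwise H\"older inequality $\|u\|_{L^2(T)}\le|T|^{1/2-1/p}\|u\|_{L^p(T)}$ (valid for $p\ge2$, with $|T|^{1/2}$ when $p=\infty$), and likewise for the $W^{1,2}(T)$- and $W^{2,2}(T)$-seminorms, and then apply \eqref{optimalorder} with $s=r+1$ and $t\in\{0,1,2\}$; the admissibility condition $s>m/p$ for these $L^p$ estimates is exactly $p>m/(r+1)$. Because the powers $h_T^2$ in $h_T^2|g_h-g|_{H^1(T)}^2$ and $h_T^4$ in $h_T^4|g_h-g|_{H^2(T)}^2$ combine with the respective interpolation powers $h_T^{2r}$ and $h_T^{2(r-1)}$ to match the power $h_T^{2(r+1)}$ coming from $\|g_h-g\|_{L^2(T)}^2$, every summand reduces to the common shape $C|T|^{1-2/p}h_T^{2(r+1)}|g|_{W^{r+1,p}(T)}^2$. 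Applying the discrete H\"older inequality with exponents $p/2$ and $p/(p-2)$ then yields
\[
\sum_T |T|^{1-2/p}\bigl(h_T^{r+1}|g|_{W^{r+1,p}(T)}\bigr)^2 \le \Bigl(\sum_T h_T^{p(r+1)}|g|_{W^{r+1,p}(T)}^p\Bigr)^{2/p}\Bigl(\sum_T |T|\Bigr)^{(p-2)/p}.
\]

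The decisive algebraic point --- and the only place any real care is needed --- is that the H\"older conjugate turns $|T|^{1-2/p}$ into exactly $|T|^{(1-2/p)\cdot p/(p-2)}=|T|$, so the second factor collapses to $|\Omega|^{(p-2)/p}$; taking square roots produces the claimed estimate with a constant proportional to $|\Omega|^{(p-2)/(2p)}\le\max\{1,|\Omega|^{1/2}\}$, which is bounded independently of $p$ and accounts for the dependence of $C$ on $\Omega$. Notably, shape-regularity does not re-enter this final assembly, and the case $p=\infty$ is the same computation with $\max_T$ replacing the $\ell^p$-sum. The main obstacle is therefore purely bookkeeping: ensuring that the various $h_T$-powers align to a single exponent $h_T^{2(r+1)}$ and that the H\"older exponents are chosen so that the weights $|T|^{1-2/p}$ telescope into $|\Omega|$; once these are in place, the corollary follows by combining the prefactor bound with this estimate and Theorem~\ref{thm:conv}.
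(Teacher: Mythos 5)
Your proposal is correct and takes essentially the same route as the paper: the paper likewise deduces the corollary from Theorem~\ref{thm:conv} together with~(\ref{optimalorder}) and the three H\"older-type bounds of the form $\left( \sum_T h_T^{2k} |g_h-g|_{H^k(T)}^2 \right)^{1/2} \le |\Omega|^{1/2-1/p} \left( \sum_T h_T^{kp} |g_h-g|_{W^{k,p}(T)}^p \right)^{1/p}$, which are exactly your elementwise-plus-discrete H\"older computation. Your explicit verification of the standing hypotheses and of the boundedness of the prefactors (including the observation that controlling $C_2$ and the $H^2$ seminorm forces $r\ge 1$ when $N\ge 3$) is left implicit in the paper but is the intended argument.
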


\begin{remark} \label{remark:optimalorder_general}
The corollary above continues to hold if we allow slightly more general interpolants in Definition~\ref{def:optimalorder}.  For example, it holds if~(\ref{optimalorder}) is replaced by 
		\begin{equation} \label{optimalorder_general}
		|\mathcal{I}_h g - g|_{W^{t,p}(T)} \le C_3 h_T^{s-t} \sum_{\substack{T' : T' \cap T \neq \emptyset}} |g|_{W^{s,p}(T')},
		\end{equation}
where the sum is over all 	$T' \in \mathcal{T}_h^N$ that share a subsimplex with $T$.
\end{remark}

In what follows, we reuse the letter $C$ to denote a positive constant that may change at each occurrence and may depend on $N$, $\|g\|_{W^{1,\infty}(\Omega)}$, $\|g^{-1}\|_{L^\infty(\Omega)}$, $C_0$, and $C_1$.  Beginning in Lemma~\ref{lemma:einbound}, we allow $C$ to also depend on $C_2$.

Our strategy for proving Theorem~\ref{thm:conv} will be to consider an evolving metric
\[
\gt(t) = (1-t)g + tg_h
\]
with time derivative
\[
\sigma = \frac{\partial}{\partial t}\gt(t) = g_h-g.
\]
Note that $\gt(t)$, being piecewise smooth and tangential-tangential continuous, is a Regge metric for all $t \in [0,1]$, and it happens to be a (globally) smooth Riemannian metric at $t=0$.
Since $\gt(0)=g$ and $\gt(1)=g_h$, Theorem~\ref{thm:distcurvdot} implies that
\[
\langle (R\omega)_{\rm dist}(g_h) - (R\omega)(g), v \rangle_{V',V} =
\int_0^1 b_h(\gt(t);\sigma,v) - a_h(\gt(t);\sigma,v) \, dt, \quad \forall v \in V.
\]
Thus, we can estimate $(R\omega)_{\rm dist}(g_h) - (R\omega)(g)$ by estimating the bilinear forms $b_h(\gt(t);\cdot,\cdot)$ and $a_h(\gt(t);\cdot,\cdot)$.

To do this, we introduce some notation.  Given any Regge metric $g$, we let $\nabla_g$ and $\nabla$ denote the covariant derivatives with respect to $g$ and $\delta$, respectively. Similarly, we append a subscript $g$ to other operators like $\Tr$, $\mathbb{S}$, and $\dv$ when they are taken with respect to $g$, and we omit the subscript when they are taken with respect to $\delta$. On the boundary of any $N$-simplex $T$, we let $n_g$ and $n$ denote the outward unit normal vectors with respect to $\left.g\right|_T$ and $\delta$, respectively.  These two vectors are related to one another in coordinates via
\begin{equation} \label{normal}
n_g = \frac{1}{\sqrt{n^T g^{-1} n}} g^{-1} n, 
\end{equation} 
where we are thinking of $g$ as a matrix and $n$ and $n_g$ as column vectors.  We write $\langle \cdot, \cdot \rangle_g$ for the $g$-inner product of two tensor fields.  If $D$ is a submanifold of $\Omega$ on which the induced metric $\left.g\right|_D$ is well-defined, and if $\rho$ is a tensor field on $D$, then we denote
\[
\|\rho\|_{L^p(D,g)} = 
\begin{cases}
\left( \int_D |\rho|_g^p \, \omega_D(g) \right)^{1/p}, &\mbox{ if } 1 \le p < \infty, \\
\sup_D |\rho|_g, &\mbox{ if } p=\infty,
\end{cases}
\]
where $\omega_D(g)$ is the induced volume form on $D$ and $|\rho|_g = \langle \rho, \rho \rangle_g^{1/2}$.  We abbreviate $\|\cdot\|_{L^p(D)} = \|\cdot\|_{L^p(D,\delta)}$ and $|\cdot|=|\cdot|_\delta$.

We introduce two metric-dependent, mesh-dependent norms.  For $v \in V$, we set 
\[
\|v\|_{2,h,g}^2 = \sum_T \|\nabla_g \nabla_g v\|_{L^2(T,g)}^2 + \sum_F h_F^{-1} \left\| \llbracket dv(n_g) \rrbracket \right\|_{L^2(F,g)}^2.
\]
If $\sigma$ is a symmetric $(0,2)$-tensor field with the property that $\sigma(n_g,n_g)$ is well-defined and single-valued on every $F \in \mathcal{T}_h^{N-1}$, then we set
\[
\|\sigma\|_{0,h,g}^2 = \sum_T \|\sigma\|_{L^2(T,g)}^2 + \sum_F h_F \| \sigma(n_g,n_g)\|_{L^2(F,g)}^2,
\]
where $h_F$ is the Euclidean diameter of $F$.  
Note that the image under $\mathbb{S}_g$ of any symmetric $(0,2)$-tensor field possessing single-valued tangential-tangential components along faces automatically possesses single-valued normal-normal components along faces, because 
\[
\mathbb{S}_g\sigma(n_g,n_g) = \sigma(n_g,n_g) - g(n_g,n_g)\Tr_g\sigma = -\Tr_g\left(\left.\sigma\right|_F\right).
\]

Now we return to the setting of Theorem~\ref{thm:conv} and the discussion thereafter: $g$ is a smooth Riemannian metric, $g_h$ is a Regge metric, $\gt(t) = (1-t)g+tg_h$, and $\sigma = g_h-g$.  We assume throughout what follows that $\lim_{h \to 0} \|g_h-g\|_{L^\infty(\Omega)} = 0$ and $\sup_{h>0} \max_{T \in \mathcal{T}_h^N} \|g_h\|_{W^{1,\infty}(T)} < \infty$.  These assumptions have some elementary consequences that we record here for reference (see~\cite{gawlik2020high} for a derivation).  For every $h$ sufficiently small, every $t \in [0,1]$, and every vector $w$ with unit Euclidean length, 
\begin{align}
\|\gt\|_{L^\infty(\Omega)} + \|\gt^{-1}\|_{L^\infty(\Omega)} &\le C, \label{gtbound} \\
 \max_T |\gt|_{W^{1,\infty}(T)} &\le C,  \\
C^{-1} \le \inf_{\Omega} (w^T \gt w) \le \sup_{\Omega} (w^T \gt w) &\le C, \label{eigbound}
\end{align}
where we are thinking of $\gt$ as a matrix and $w$ as a column vector in the last line.  Note that the last line implies the existence of positive lower and upper bounds on $w^T \gt^{-1} w$ as well:
\begin{equation} \label{eiginvbound}
C^{-1} \le \inf_{\Omega} (w^T \gt^{-1} w) \le \sup_{\Omega} (w^T \gt^{-1} w) \le C.
\end{equation}
In addition, the inequalities $\|\gt\|_{L^\infty(\Omega)} \le C$ and $\|\gt^{-1}\|_{L^\infty(\Omega)} \le C$ imply that
\begin{equation} \label{Lpequiv1}
C^{-1} \|\rho\|_{L^p(D,\gt(t_2))} \le \|\rho\|_{L^p(D,\gt(t_1))} \le C \|\rho\|_{L^p(D,\gt(t_2))}
\end{equation}
and
\begin{equation} \label{Lpequiv2}
C^{-1} \|\rho\|_{L^p(D)} \le \|\rho\|_{L^p(D,\gt(t_1))} \le C \|\rho\|_{L^p(D)}
\end{equation}
for every $t_1,t_2 \in [0,1]$, every admissible submanifold $D$, every $p \in [1,\infty]$, every tensor field $\rho$ having finite $L^p(D)$-norm, and every $h$ sufficiently small.  We select $h_0>0$ so that~(\ref{gtbound}-\ref{Lpequiv2}) hold for all $h \le h_0$, and we tacitly use these inequalities throughout our analysis.

We will show the following near-equivalence of the norms $\|\cdot\|_{2,h,\gt}$ and $\|\cdot\|_{2,h,g}$.

\begin{proposition} \label{prop:equiv}
For every $v \in V$, every $h \le h_0$, and every $t \in [0,1]$,
\[\begin{split}
\|v\|_{2,h,\gt}^2 \le C \bigg[ \|v\|_{2,h,g}^2 + \left( \max_T  h_T^{-2} \| g_h - g \|_{L^\infty(T)}^2 + \max_T |g_h-g|_{W^{1,\infty}(T)}^2 \right) \\ \times \sum_T \left( \|dv\|_{L^2(T)}^2 + h_T^2 |dv|_{H^1(T)}^2 \right) \bigg] .
\end{split}\]
\end{proposition}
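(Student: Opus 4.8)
The plan is to pass every metric-weighted $L^2$-norm to its Euclidean counterpart and then compare the two norms term by term, exploiting that the \emph{only} differences between $\|\cdot\|_{2,h,\gt}$ and $\|\cdot\|_{2,h,g}$ enter through (i) the covariant Hessian (via the Christoffel symbols) and (ii) the unit normal. First I would invoke the equivalences~(\ref{Lpequiv1})--(\ref{Lpequiv2}) with $p=2$ to replace each $L^2(T,\gt)$-, $L^2(F,\gt)$-, $L^2(T,g)$-, and $L^2(F,g)$-norm by its Euclidean version at the cost of a factor $C$; this is legitimate because~(\ref{eigbound})--(\ref{eiginvbound}) bound the eigenvalues of $\gt$ and $g$ uniformly in $t\in[0,1]$ and $h\le h_0$. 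It then suffices to bound
\[
\sum_T\|\nabla_{\gt}\nabla_{\gt}v\|_{L^2(T)}^2 + \sum_F h_F^{-1}\left\|\llbracket dv(n_{\gt})\rrbracket\right\|_{L^2(F)}^2
\]
by $C\|v\|_{2,h,g}^2$ plus the asserted error. For each term I would apply the triangle inequality to separate a pure $g$-contribution (which, upon reinstating the $g$-weights via~(\ref{Lpequiv2}), reproduces $\|v\|_{2,h,g}^2$) from a remainder that measures the effect of replacing $g$ by $\gt$.

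For the volume term I would use $(\nabla_g\nabla_g v)_{ij}=\partial_i\partial_j v-\Gamma^k_{ij}(g)\,\partial_k v$, so that $\nabla_{\gt}\nabla_{\gt}v-\nabla_g\nabla_g v=-(\Gamma(\gt)-\Gamma(g))\,dv$. Writing $\Gamma^k_{ij}(g)=\tfrac12 g^{kl}(\partial_i g_{jl}+\partial_j g_{il}-\partial_l g_{ij})$ and using $\gt-g=t\sigma$ together with $\gt^{-1}-g^{-1}=-t\,\gt^{-1}\sigma g^{-1}$, I would split the difference into one piece proportional to $(\gt^{-1}-g^{-1})\,\partial\gt$ and one proportional to $g^{-1}\,\partial\sigma$. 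With the uniform bounds on $\gt^{-1}$, $g^{-1}$, $\partial g$ and with $\|\sigma\|_{L^\infty}\to0$, this gives the pointwise estimate $\|\Gamma(\gt)-\Gamma(g)\|_{L^\infty(T)}\le C(\|\sigma\|_{L^\infty(T)}+|\sigma|_{W^{1,\infty}(T)})$, hence a volume remainder bounded by $C\sum_T(\|\sigma\|_{L^\infty(T)}^2+|\sigma|_{W^{1,\infty}(T)}^2)\|dv\|_{L^2(T)}^2$. The key point is to keep the two contributions apart: the $|\sigma|_{W^{1,\infty}}$-piece matches the target directly, while $\|\sigma\|_{L^\infty(T)}^2=h_T^2\,(h_T^{-2}\|\sigma\|_{L^\infty(T)}^2)\le h_0^2\max_{T'}h_{T'}^{-2}\|\sigma\|_{L^\infty(T')}^2$ is absorbed into the (weaker) $h_T^{-2}\|\sigma\|_{L^\infty}^2$ weight.

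The facet term is the most delicate step, and it is where the characteristic $h_T^{-2}$ scaling is produced. The two normals are related through~(\ref{normal}); since $A\mapsto A n/\sqrt{n^{T}An}$ is Lipschitz on the set of matrices with eigenvalues bounded as in~(\ref{eiginvbound}) and $|\gt^{-1}-g^{-1}|\le C|\sigma|$, I obtain $|n_{\gt}^\pm-n_g^\pm|\le C|\sigma|$ pointwise on each side of $F$. Because $v$ is single-valued across $F$, the jump difference splits as $\llbracket dv(n_{\gt})\rrbracket-\llbracket dv(n_g)\rrbracket=dv^+(n_{\gt}^+-n_g^+)+dv^-(n_{\gt}^--n_g^-)$, so that $|\llbracket dv(n_{\gt})\rrbracket-\llbracket dv(n_g)\rrbracket|\le C\sum_{\pm}|\sigma|\,|dv^\pm|$ on $F$. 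Applying the scaled discrete trace inequality $\|dv^\pm\|_{L^2(F)}^2\le C(h_{T^\pm}^{-1}\|dv\|_{L^2(T^\pm)}^2+h_{T^\pm}|dv|_{H^1(T^\pm)}^2)$ and using $h_F\simeq h_{T^\pm}$ from shape-regularity, the prefactor $h_F^{-1}$ combines with the trace weight $h_{T^\pm}^{-1}$ to give exactly $h_{T^\pm}^{-2}$; summing over faces (each $T$ meeting at most $N+1$ of them) yields a facet remainder bounded by $C\sum_T\|\sigma\|_{L^\infty(T)}^2(h_T^{-2}\|dv\|_{L^2(T)}^2+|dv|_{H^1(T)}^2)=C\sum_T h_T^{-2}\|\sigma\|_{L^\infty(T)}^2(\|dv\|_{L^2(T)}^2+h_T^2|dv|_{H^1(T)}^2)$.

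Finally I would bound $h_T^{-2}\|\sigma\|_{L^\infty(T)}^2\le\max_{T'}h_{T'}^{-2}\|\sigma\|_{L^\infty(T')}^2$ (and likewise for the $W^{1,\infty}$ seminorm), factor the maximum out of the sum, collect the volume and facet remainders with the $\|v\|_{2,h,g}^2$-part, and recognize $\sum_T(\|dv\|_{L^2(T)}^2+h_T^2|dv|_{H^1(T)}^2)$, which is precisely the claimed bound with $\sigma=g_h-g$. The only genuine obstacle is the bookkeeping of $h$-powers in this last assembly: one must verify that the mesh weight $h_F^{-1}$ together with the trace inequality reproduces the \emph{asymmetric} weights $h_T^{-2}\|\sigma\|_{L^\infty}^2$ and $|\sigma|_{W^{1,\infty}}^2$, rather than collapsing them into the coarser $\|\sigma\|_{W^{1,\infty}}^2$, which would lose the sharpness the estimate is designed to capture.
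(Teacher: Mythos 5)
Your proposal is correct and follows essentially the same route as the paper: a triangle-inequality split of both the covariant Hessian term (via the Christoffel-symbol difference, bounded by $C\|g_h-g\|_{W^{1,\infty}(T)}$) and the facet term (via the Lipschitz dependence of $n_{\gt}$ on $\gt^{-1}$, which is the paper's Lemma~\ref{lemma:ndiff}), followed by the trace inequality~(\ref{traceineq}) and shape-regularity to convert $h_F^{-1}$ into the $h_T^{-2}$ weight. The only cosmetic difference is that you pass to Euclidean norms globally at the outset and track the $\|\sigma\|_{L^\infty}$ versus $|\sigma|_{W^{1,\infty}}$ split more explicitly, whereas the paper uses the norm equivalences~(\ref{Lpequiv1})--(\ref{Lpequiv2}) tacitly along the way.
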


The proof of Proposition~\ref{prop:equiv} relies on the following lemma.
\begin{lemma} \label{lemma:ndiff}
Let $\g1$ and $\g2$ be two symmetric positive definite matrices, and let $n$ be a unit vector.  Let 
\[
n_{\g{i}} = \frac{1}{\sqrt{n^T \g{i}^{-1} n}} \g{i}^{-1} n, \quad i=1,2.
\]
Then there exists a constant $c$ depending on $|\g1|, |\g2|, |\g1^{-1}|, |\g2^{-1}|$ such that
\[
|n_{\g1} - n_{\g2}| \le c|\g1-\g2|.
\]
\end{lemma}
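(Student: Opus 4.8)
The plan is to regard $n_{g_i}$ as the normalization of the vector $v_i := \g{i}^{-1} n$ and to prove a Lipschitz bound by treating the numerator and the normalizing denominator separately. First I would observe that $n^T \g{i}^{-1} n = n^T v_i$, so that
\[
n_{\g{i}} = \frac{v_i}{\sqrt{n^T v_i}}, \qquad v_i = \g{i}^{-1} n.
\]
The key a priori facts, which follow from positive-definiteness and $|n|=1$, are the upper bound $|v_i| \le |\g{i}^{-1}|$ and the lower bound $n^T v_i = n^T \g{i}^{-1} n \ge |\g{i}|^{-1}$ (the smallest eigenvalue of $\g{i}^{-1}$ being $|\g{i}|^{-1}$). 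The latter keeps the denominators bounded away from zero, and it is precisely what forces the constant $c$ to depend on $|\g{i}|$ and $|\g{i}^{-1}|$.

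Second, I would bound the numerator difference using the resolvent identity $\g1^{-1} - \g2^{-1} = \g1^{-1}(\g2 - \g1)\g2^{-1}$, which gives
\[
|v_1 - v_2| = |(\g1^{-1} - \g2^{-1}) n| \le |\g1^{-1}| \, |\g2^{-1}| \, |\g1 - \g2|.
\]
Third, I would split the desired difference as
\[
n_{\g1} - n_{\g2} = \frac{v_1 - v_2}{\sqrt{n^T v_1}} + v_2 \left( \frac{1}{\sqrt{n^T v_1}} - \frac{1}{\sqrt{n^T v_2}} \right),
\]
and estimate the two pieces. The first is at most $|\g1|^{1/2}\,|v_1 - v_2|$ by the denominator lower bound. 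For the second I would use the elementary scalar identity $\frac{1}{\sqrt a} - \frac{1}{\sqrt b} = \frac{b - a}{\sqrt{ab}\,(\sqrt a + \sqrt b)}$ with $a = n^T v_1$ and $b = n^T v_2$, together with $|b - a| = |n^T(v_1 - v_2)| \le |v_1 - v_2|$ and $|v_2| \le |\g2^{-1}|$; since $a, b \ge |\g{i}|^{-1}$, the quantity $\sqrt{ab}\,(\sqrt a + \sqrt b)$ is bounded below, so this piece too is controlled by a constant times $|v_1 - v_2|$. Combining with the second step yields $|n_{\g1} - n_{\g2}| \le c\,|\g1 - \g2|$ with $c$ depending only on $|\g1|, |\g2|, |\g1^{-1}|, |\g2^{-1}|$.

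The only subtle point, and what I regard as the crux, is the uniform lower bound $n^T v_i \ge |\g{i}|^{-1}$ on the normalizing factors; everything else is a routine combination of the resolvent identity and elementary scalar estimates. An alternative would be to bound the derivative of the normalization map $v \mapsto v/\sqrt{n^T v}$ along the segment joining $v_1$ and $v_2$, using that the affine function $v \mapsto n^T v$ stays above $\min(n^T v_1, n^T v_2)$ on that segment; but the direct algebraic splitting above avoids any calculus and makes the dependence of $c$ fully explicit.
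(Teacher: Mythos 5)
Your proof is correct and takes essentially the same route as the paper's: both split $n_{\g1}-n_{\g2}$ into a numerator-difference piece and a reciprocal-square-root-difference piece, control the latter with the identity $\frac{1}{\sqrt a}-\frac{1}{\sqrt b}=\frac{b-a}{\sqrt{ab}(\sqrt a+\sqrt b)}$ (the paper uses the equivalent form~(\ref{sqrtdiff})), and reduce everything to $|\g1^{-1}-\g2^{-1}|\le C|\g1-\g2|$ via the resolvent identity. Your explicit lower bound $n^T\g{i}^{-1}n\ge|\g{i}|^{-1}$ is exactly the point the paper leaves implicit in ``the bound follows easily.''
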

\begin{proof}
Using the identity
\begin{equation} \label{sqrtdiff}
\frac{1}{\sqrt{n^T \g1^{-1} n}} - \frac{1}{\sqrt{n^T \g2^{-1} n}}= \frac{ n^T (\g2^{-1}-\g1^{-1}) n }{ n^T \g1^{-1} n \sqrt{n^T \g2^{-1} n} + n^T \g2^{-1} n \sqrt{n^T \g1^{-1} n} },
\end{equation}
we can write
\[
n_{\g1} - n_{\g2} = \frac{ n^T (\g2^{-1}-\g1^{-1}) n }{ n^T \g1^{-1} n \sqrt{n^T \g2^{-1} n} + n^T \g2^{-1} n \sqrt{n^T \g1^{-1} n} } \g1^{-1} n + \frac{1}{\sqrt{n^T \g2^{-1} n}} (\g1^{-1} - \g2^{-1}) n.
\]
Since $\g1^{-1}-\g2^{-1} = \g1^{-1} (\g2 - \g1) \g2^{-1}$, the bound follows easily.
\end{proof}

Notice that in view of~(\ref{normal}), Lemma~\ref{lemma:ndiff} implies that
\begin{equation} \label{ndiffLinf}
\|n_{\gt}-n_g\|_{L^\infty(F)} \le  C\| \gt - g \|_{L^\infty(F)}
\end{equation}
on either side of any face $F$.  

Now we are ready to begin proving Proposition~\ref{prop:equiv}.  Consider the term $\sum_F h_F^{-1} \left\| \llbracket dv(n_{\gt}) \rrbracket \right\|_{L^2(F,\gt)}^2$ that appears in the definition of $\|v\|_{2,h,\gt}^2$.  Notice that
\begin{align*}
dv(n_{\gt}) 
&= dv(n_g) + dv(n_{\gt}-n_g),
\end{align*}
and we can use the bound~(\ref{ndiffLinf}) to estimate
\begin{align*}
\|dv(n_{\gt}-n_g)\|_{L^2(F,\gt)}
&\le C \|dv(n_{\gt}-n_g)\|_{L^2(F)} \\
&\le C \|dv\|_{L^2(F)} \|n_{\gt}-n_g\|_{L^\infty(F)} \\
&\le C \|dv\|_{L^2(F)} \| \gt - g \|_{L^\infty(F)} \\
&\le C \|dv\|_{L^2(F)} \| g_h - g \|_{L^\infty(F)}
\end{align*}
on either side of $F$.  Using the trace inequality 
\begin{equation} \label{traceineq}
\|dv\|_{L^2(F)}^2 \le C\left( h_T^{-1} \|dv\|_{L^2(T)}^2 + h_T |dv|_{H^1(T)}^2 \right), \quad F \subset T \in \mathcal{T}_h^N,
\end{equation}
it follows that
\begin{align*}
\sum_F &h_F^{-1} \| \llbracket dv(n_{\gt}) \rrbracket \|_{L^2(F,\gt)}^2 \\
&\le C \left( \sum_F h_F^{-1} \|\llbracket dv(n_g) \rrbracket \|_{L^2(F,g)}^2 + \sum_T h_T^{-1} \left( h_T^{-1} \|dv\|_{L^2(T)}^2 + h_T |dv|_{H^1(T)}^2 \right) \| g_h - g \|_{L^\infty(T)}^2  \right) \\
&= C \left( \sum_F h_F^{-1} \|\llbracket dv(n_g) \rrbracket \|_{L^2(F,g)}^2 + \sum_T \left( h_T^{-2} \| g_h - g \|_{L^\infty(T)}^2 \|dv\|_{L^2(T)}^2 + \| g_h - g \|_{L^\infty(T)}^2 |dv|_{H^1(T)}^2 \right)  \right),
\end{align*}
where we have used~(\ref{Lpequiv1}),~(\ref{traceineq}), and the bound $h_T \le C h_F$, which follows from the shape-regularity of $\mathcal{T}_h$.

Next, consider the term $\sum_T \| \nabla_{\gt} \nabla_{\gt} v \|_{L^2(T,\gt)}^2$ that appears in the definition of $\|v\|_{2,h,\gt}^2$.  Notice that
\[
\left( \nabla_{\gt} \nabla_{\gt} v \right)_{ij} = \left( \nabla_g \nabla_g v \right)_{ij} + ( \Gamma_{ij}^k - \widetilde{\Gamma}_{ij}^k ) \frac{\partial v}{\partial x^k},
\]
where $\Gamma_{ij}^k$ and $\widetilde{\Gamma}_{ij}^k$ are the Christoffel symbols of the second kind associated with $g$ and $\gt$, respectively.  We have
\[
\|\Gamma_{ij}^k - \widetilde{\Gamma}_{ij}^k\|_{L^\infty(T)} \le C\|\gt-g\|_{W^{1,\infty}(T)} \le C\|g_h-g\|_{W^{1,\infty}(T)},
\]
so
\begin{align*}
\| \nabla_{\gt} \nabla_{\gt} v \|_{L^2(T,\gt)}
&\le C \| \nabla_{\gt} \nabla_{\gt} v \|_{L^2(T)} \\
&\le C \left( \| \nabla_g \nabla_g v \|_{L^2(T)} + \|g_h-g\|_{W^{1,\infty}(T)} \|dv\|_{L^2(T)} \right) \\
&\le C \left( \| \nabla_g \nabla_g v \|_{L^2(T,g)} + \|g_h-g\|_{W^{1,\infty}(T)} \|dv\|_{L^2(T)} \right).
\end{align*}
It follows that
\[\begin{split}
\|v\|_{2,h,\gt}^2 \le C \bigg[ \|v\|_{2,h,g}^2 + \left( \max_T h_T^{-2} \| g_h - g \|_{L^\infty(T)}^2 + \max_T |g_h-g|_{W^{1,\infty}(T)}^2 \right) \\ \times \sum_T \left( \|dv\|_{L^2(T)}^2 + h_T^2 |dv|_{H^1(T)}^2 \right) \bigg].
\end{split}\]
This completes the proof of Proposition~\ref{prop:equiv}.

Our next step will be to estimate the bilinear form $b_h(\gt;\cdot,\cdot)$.
\begin{proposition} \label{prop:bhbound}
For every $h \le h_0$, every $t \in [0,1]$, and every $v \in H^2_0(\Omega)$, we have (with $\sigma=g_h-g$)
\[
\begin{split}
|b_h(\gt;\sigma,v)| &\le C \left( \|g_h-g\|_{L^2(\Omega)}^2 + \sum_T h_T^2 |g_h-g|_{H^1(T)}^2 \right)^{1/2} \\
&\quad\times \left( 1 + \max_T h_T^{-1} \| g_h - g \|_{L^\infty(T)} + \max_T |g_h-g|_{W^{1,\infty}(T)}  \right) \|v\|_{H^2(\Omega)}.
\end{split}
\]
\end{proposition}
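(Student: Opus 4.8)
The plan is to bound $b_h(\gt;\sigma,v)$ by the product of the two mesh-dependent norms $\|\mathbb{S}_{\gt}\sigma\|_{0,h,\gt}$ and $\|v\|_{2,h,\gt}$, and then to estimate each factor separately in terms of Euclidean quantities. First I would apply the Cauchy--Schwarz inequality directly to the two terms defining $b_h(\gt;\sigma,v)$. On each $T$ the volume term is controlled by $\|\mathbb{S}_{\gt}\sigma\|_{L^2(T,\gt)}\|\nabla_{\gt}\nabla_{\gt}v\|_{L^2(T,\gt)}$, and on each $F$, after inserting a factor $h_F^{1/2}h_F^{-1/2}$, the face term is controlled by $(h_F^{1/2}\|\mathbb{S}_{\gt}\sigma(n_{\gt},n_{\gt})\|_{L^2(F,\gt)})(h_F^{-1/2}\|\llbracket dv(n_{\gt})\rrbracket\|_{L^2(F,\gt)})$. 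Summing and applying Cauchy--Schwarz over the index sets $\{T\}$ and $\{F\}$ yields
\[
|b_h(\gt;\sigma,v)| \le C\,\|\mathbb{S}_{\gt}\sigma\|_{0,h,\gt}\,\|v\|_{2,h,\gt}.
\]
Here $\mathbb{S}_{\gt}\sigma$ has single-valued normal--normal components along faces, since $\mathbb{S}_{\gt}\sigma(n_{\gt},n_{\gt}) = -\Tr_{\gt}(\sigma|_F)$ and $\sigma=g_h-g$ is tangential--tangential continuous, so $\|\mathbb{S}_{\gt}\sigma\|_{0,h,\gt}$ is well-defined.

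Next I would estimate $\|\mathbb{S}_{\gt}\sigma\|_{0,h,\gt}$. Because $\gt$ and $\gt^{-1}$ are uniformly bounded by~(\ref{gtbound}), the algebraic map $\sigma \mapsto \mathbb{S}_{\gt}\sigma$ is bounded pointwise, so by~(\ref{Lpequiv2}) the volume contribution satisfies $\sum_T\|\mathbb{S}_{\gt}\sigma\|_{L^2(T,\gt)}^2 \le C\|g_h-g\|_{L^2(\Omega)}^2$. For the face contribution I would use the pointwise bound $|\mathbb{S}_{\gt}\sigma(n_{\gt},n_{\gt})| = |\Tr_{\gt}(\sigma|_F)| \le C|\sigma|$ together with the analogue of the trace inequality~(\ref{traceineq}) for $\sigma$ and the shape-regularity estimates $h_F \le Ch_T \le C'h_F$, giving
\[
\sum_F h_F\,\|\mathbb{S}_{\gt}\sigma(n_{\gt},n_{\gt})\|_{L^2(F,\gt)}^2 \le C\sum_T\big(\|g_h-g\|_{L^2(T)}^2 + h_T^2|g_h-g|_{H^1(T)}^2\big).
\]
Combining the two contributions shows that $\|\mathbb{S}_{\gt}\sigma\|_{0,h,\gt}^2 \le C\big(\|g_h-g\|_{L^2(\Omega)}^2 + \sum_T h_T^2|g_h-g|_{H^1(T)}^2\big)$, which is the square of the first factor in the claimed bound.

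Finally I would estimate $\|v\|_{2,h,\gt}$ using Proposition~\ref{prop:equiv}. The crucial observation is that for $v\in H^2_0(\Omega)$ the gradient $dv$ has single-valued traces across interior faces and vanishing trace on $\partial\Omega$, so every jump $\llbracket dv(n_g)\rrbracket$ vanishes and $\|v\|_{2,h,g}^2 = \sum_T\|\nabla_g\nabla_g v\|_{L^2(T,g)}^2$. Writing the covariant Hessian in coordinates and using that the Christoffel symbols of $g$ are bounded in $L^\infty$ (since $g\in W^{1,\infty}$), together with~(\ref{Lpequiv2}), gives $\|v\|_{2,h,g}^2 \le C\|v\|_{H^2(\Omega)}^2$; the remaining sum $\sum_T(\|dv\|_{L^2(T)}^2 + h_T^2|dv|_{H^1(T)}^2)$ appearing in Proposition~\ref{prop:equiv} is likewise bounded by $C\|v\|_{H^2(\Omega)}^2$ since $h_T \le h_0$. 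Combining these with Proposition~\ref{prop:equiv} and taking square roots yields
\[
\|v\|_{2,h,\gt} \le C\big(1 + \max_T h_T^{-1}\|g_h-g\|_{L^\infty(T)} + \max_T|g_h-g|_{W^{1,\infty}(T)}\big)\|v\|_{H^2(\Omega)},
\]
which is the second factor. Multiplying the two estimates gives the stated bound. The main obstacle is the treatment of $\|v\|_{2,h,\gt}$: the covariant Hessian with respect to $\gt$ is only piecewise defined and carries the geometry of the non-smooth metric $\gt$, so it cannot be compared directly to a smooth-metric quantity. Proposition~\ref{prop:equiv} is precisely what transfers the estimate to the smooth metric $g$ at the cost of the correction terms involving $\|g_h-g\|_{L^\infty}$ and $|g_h-g|_{W^{1,\infty}}$, and the subsequent use of $v\in H^2_0(\Omega)$ to annihilate the jump terms is what makes the resulting bound clean.
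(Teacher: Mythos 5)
Your proposal is correct and follows essentially the same route as the paper's proof: the Cauchy--Schwarz bound $|b_h(\gt;\sigma,v)| \le \|\mathbb{S}_{\gt}\sigma\|_{0,h,\gt}\|v\|_{2,h,\gt}$, the trace inequality to control the face contribution of $\|\mathbb{S}_{\gt}\sigma\|_{0,h,\gt}$, and Proposition~\ref{prop:equiv} combined with the vanishing of $\llbracket dv(n_g)\rrbracket$ for $v \in H^2_0(\Omega)$ to control $\|v\|_{2,h,\gt}$. The only cosmetic difference is that you invoke pointwise boundedness of $\sigma \mapsto \mathbb{S}_{\gt}\sigma$ where the paper computes $\langle\mathbb{S}_{\gt}\sigma,\mathbb{S}_{\gt}\sigma\rangle_{\gt} = \langle\sigma,\sigma\rangle_{\gt} + (N-2)\langle\gt,\sigma\rangle_{\gt}^2$ explicitly; both yield the same $L^2$ bound.
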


\begin{proof}
In view of the definitions of $\|\cdot\|_{0,h,\gt}$ and $\|\cdot\|_{2,h,\gt}$, we have
\begin{equation} \label{bhbounded}
|b_h(\gt;\sigma,v)| \le \|\mathbb{S}_{\gt} \sigma\|_{0,h,\gt} \|v\|_{2,h,\gt}.
\end{equation}
Recalling that
\[
\|\mathbb{S}_{\gt} \sigma\|_{0,h,\gt}^2 = \sum_T \|\mathbb{S}_{\gt}\sigma\|_{L^2(T,\gt)}^2 + \sum_F h_F \|\mathbb{S}_{\gt}\sigma(n_{\gt},n_{\gt})\|_{L^2(F,\gt)}^2,
\]
we compute
\begin{align*}
\langle \mathbb{S}_{\gt}\sigma, \mathbb{S}_{\gt}\sigma \rangle_{\gt}
&= \Big\langle \sigma - \gt \langle \gt, \sigma \rangle_{\gt}, \, \sigma - \gt \langle \gt,\sigma \rangle_{\gt} \Big\rangle_{\gt} \\
&= \langle \sigma, \sigma \rangle_{\gt} - 2 \langle \gt, \sigma \rangle_{\gt}^2 + \langle \gt, \gt \rangle_{\gt} \langle \gt, \sigma \rangle_{\gt}^2 \\
&= \langle \sigma, \sigma \rangle_{\gt} + (N-2) \langle \gt, \sigma \rangle_{\gt}^2,
\end{align*}
which leads to the bound
\[
\|\mathbb{S}_{\gt}\sigma\|_{L^2(T,\gt)}
\le C\|\sigma\|_{L^2(T,\gt)} \le C \|\sigma\|_{L^2(T)}.
\]
Also, by the trace inequality,
\begin{align*}
\|\mathbb{S}_{\gt}\sigma(n_{\gt},n_{\gt})\|_{L^2(\partial T,\gt)}^2
&\le C \|\mathbb{S}_{\gt}\sigma\|_{L^2(\partial T,\gt)}^2 \\
&\le C \|\sigma\|_{L^2(\partial T)}^2 \\
&\le C \left( h_T^{-1}\|\sigma\|_{L^2(T)}^2 + h_T |\sigma|_{H^1(T)}^2 \right).
\end{align*}
(Here we are measuring the $L^2(\partial T,\gt)$-norm of the full tensor $\mathbb{S}_{\gt}\sigma$ rather than its restriction to the tangent bundle of $\partial T$.)
Thus,
\begin{align}
\|\mathbb{S}_{\gt} \sigma\|_{0,h,\gt}^2 
&\le C \left( \|\sigma\|_{L^2(\Omega)}^2 + \sum_T h_T^2 |\sigma|_{H^1(T)}^2 \right) \nonumber \\
&= C \left( \|g_h-g\|_{L^2(\Omega)}^2 + \sum_T h_T^2 |g_h-g|_{H^1(T)}^2 \right). \label{normSgsigma}
\end{align}
Consider now the term $\|v\|_{2,h,\gt}$ in~(\ref{bhbounded}).  Proposition~\ref{prop:equiv} implies that
\[
\|v\|_{2,h,\gt} \le C\left( \|v\|_{2,h,g} + \left( \max_T h_T^{-1} \| g_h - g \|_{L^\infty(T)} + \max_T |g_h-g|_{W^{1,\infty}(T)} \right) \|v\|_{H^2(\Omega)} \right)
\]
since $v \in H^2_0(\Omega)$.  Furthermore, since $g$ is smooth and $v \in H^2_0(\Omega)$, we have $\llbracket dv(n_g) \rrbracket=0$ on every interior face $F$ and $\llbracket dv(n_g) \rrbracket=dv(n_g)=0$ on every face $F \subset \partial\Omega$.  Thus, $ \|v\|_{2,h,g}^2 = \sum_T \|\nabla_g \nabla_g v \|_{L^2(T,g)}^2 = \|\nabla_g \nabla_g v \|_{L^2(\Omega,g)}^2$.  Since 
\[
(\nabla_g \nabla_g v)_{ij} = (\nabla \nabla v)_{ij} - \Gamma^k_{ij} \frac{\partial v}{\partial x^k},
\]
we see that
\[
\|v\|_{2,h,g} = \|\nabla_g \nabla_g v\|_{L^2(\Omega)} \le C( |v|_{H^2(\Omega)} + |v|_{H^1(\Omega)} ) \le C \|v\|_{H^2(\Omega)}.
\]
Thus,
\begin{equation} \label{normv}
\|v\|_{2,h,\gt} \le C\left( 1 + \max_T h_T^{-1} \| g_h - g \|_{L^\infty(T)} + \max_T |g_h-g|_{W^{1,\infty}(T)} \right) \|v\|_{H^2(\Omega)}.
\end{equation}
Combining~(\ref{bhbounded}),~(\ref{normSgsigma}), and~(\ref{normv}) completes the proof.
\end{proof}

At this point, we have finished proving part~\ref{thm:conv:part1} of Theorem~\ref{thm:conv}.  Indeed, in dimension $N=2$, $a_h$ vanishes, so we can write
\begin{align*}
\left| \langle (R\omega)_{\rm dist}(g_h) - (R\omega)(g), v \rangle_{V',V} \right|
&\le \int_0^1 |b_h(\gt(t);\sigma,v)| \, dt
\end{align*}
and apply Proposition~\ref{prop:bhbound} to deduce~(\ref{convN2}). 

To prove part~\ref{thm:conv:part2} of Theorem~\ref{thm:conv}, we suppose that $N \ge 3$ and that $\sup_{h>0} \max_{T \in \mathcal{T}_h^N} |g_h|_{W^{2,\infty}(T)} < \infty$, and we proceed as follows.  Recall that
\begin{equation} \label{ahgtilde}
a_h(\gt;\sigma,v) = \sum_T \int_T \langle G(\gt), \sigma \rangle_{\gt} v \omega_T(\gt) + \mathring{\sum_F} \int_F \left\langle \llbracket \overline{\sff}(\gt) \rrbracket_F,  \left.\sigma\right|_F \right\rangle_{\gt} v \omega_F(\gt) - \mathring{\sum_S} \int_S \langle \Theta_S(\gt) \gt|_S, \sigma|_S \rangle_{\gt} v \omega_S(\gt),
\end{equation}
where have made all dependencies on the metric explicit in the notation.  We will bound each of the three terms above, beginning with the first.  Throughout what follows, we continue to denote $\sigma=g_h-g$, and we let $v$ be an arbitrary member of $V$.

\begin{lemma} \label{lemma:einbound}
We have 
\[
\left| \sum_T \int_T \langle G(\gt), \sigma \rangle_{\gt} \, v \omega_T(\gt) \right| \le C \|g_h-g\|_{L^2(\Omega)} \|v\|_{L^2(\Omega)}.
\]
\end{lemma}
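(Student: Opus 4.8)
The plan is to reduce the claim to a uniform pointwise bound on $G(\gt)$, after which the estimate follows from the Cauchy--Schwarz inequality together with the norm equivalences~(\ref{Lpequiv1})--(\ref{Lpequiv2}). The crucial observation is that, on each $N$-simplex $T$, the Einstein tensor $G(\gt) = \Ric(\gt) - \frac{1}{2}R(\gt)\gt$ is a fixed algebraic expression built from the components of $\gt$, its inverse $\gt^{-1}$, and the first and second partial derivatives of $\gt$. Since $\gt = (1-t)g + tg_h$ with $g$ smooth and $g_h$ controlled in $W^{2,\infty}(T)$ by $C_2$ (and in $W^{1,\infty}(T)$ by $C_1$), the second derivatives $\partial^2\gt = (1-t)\partial^2 g + t\,\partial^2 g_h$ are bounded in $L^\infty(T)$ uniformly in $h$ and $t$; together with the bound $\|\gt^{-1}\|_{L^\infty(\Omega)} \le C$ from~(\ref{gtbound}), this yields a constant $C$, independent of $h$ and $t \in [0,1]$, such that $|G(\gt)|_{\gt} \le C$ pointwise on every $T$.

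Granting this bound, I would estimate each summand by the Cauchy--Schwarz inequality for the $\gt$-inner product,
\[
\left| \langle G(\gt), \sigma \rangle_{\gt} \right| \le |G(\gt)|_{\gt}\, |\sigma|_{\gt} \le C\,|\sigma|_{\gt},
\]
and then integrate against $v$ and apply Cauchy--Schwarz on $T$ in the $\gt$-weighted $L^2$ inner product:
\[
\left| \int_T \langle G(\gt), \sigma \rangle_{\gt}\, v\, \omega_T(\gt) \right| \le C \|\sigma\|_{L^2(T,\gt)} \|v\|_{L^2(T,\gt)}.
\]
By~(\ref{Lpequiv2}) the $\gt$-weighted norms are equivalent to their Euclidean counterparts up to a constant, so the right-hand side is bounded by $C\|\sigma\|_{L^2(T)}\|v\|_{L^2(T)}$. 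Summing over $T$ and applying the discrete Cauchy--Schwarz inequality gives
\[
\sum_T \|\sigma\|_{L^2(T)}\|v\|_{L^2(T)} \le \|\sigma\|_{L^2(\Omega)}\|v\|_{L^2(\Omega)} = \|g_h-g\|_{L^2(\Omega)}\|v\|_{L^2(\Omega)},
\]
which is the desired inequality.

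The only step requiring genuine care is the uniform pointwise bound $|G(\gt)|_{\gt} \le C$, and this is precisely where the additional hypothesis $C_2 = \sup_{h} \max_T |g_h|_{W^{2,\infty}(T)} < \infty$ enters: without control of the second derivatives of $g_h$, the curvature of $\gt$ could blow up on fine meshes. I do not expect any deeper obstacle — the remaining manipulations are routine Cauchy--Schwarz estimates and invocations of the already-established norm equivalences — but I would take a moment to confirm that the contractions defining $\Ric(\gt)$ and $R(\gt)$ introduce only bounded factors of $\gt^{-1}$, so that it is genuinely the $\gt$-norm $|G(\gt)|_{\gt}$ (and not merely the Euclidean size of the components) that is controlled, matching the inner product $\langle\cdot,\cdot\rangle_{\gt}$ in the statement.
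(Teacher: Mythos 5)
Your proposal is correct and follows essentially the same route as the paper: a uniform $L^\infty$ bound on $G(\gt)$ obtained from the $W^{2,\infty}$ control of $g_h$ (this is exactly where the hypothesis on $C_2$ enters in the paper as well), followed by Cauchy--Schwarz on each element, the equivalence of $\gt$-weighted and Euclidean $L^2$-norms, and a discrete Cauchy--Schwarz over the elements. Your closing remark about controlling $|G(\gt)|_{\gt}$ rather than just the Euclidean components is handled in the paper by the same norm equivalence~(\ref{Lpequiv2}) you invoke, so there is no discrepancy.
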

\begin{proof}
Since we are now assuming that $\sup_{h>0} \max_{T \in \mathcal{T}_h^N} \|g_h\|_{W^{2,\infty}(T)} < \infty$, the Einstein tensor associated with $\gt$ satisfies
\[
\|G(\gt)\|_{L^\infty(T)} \le C
\]
for every $h \le h_0$, every $t \in [0,1]$, and every $T \in \mathcal{T}_h^N$.  It follows that
\begin{align*}
\left| \int_T \langle G(\gt), \sigma \rangle_{\gt} \, v \omega_T(\gt) \right|
&\le \|G(\gt)\|_{L^\infty(T,\gt)} \|\sigma\|_{L^2(T,\gt)} \|v\|_{L^2(T,\gt)} \\
&\le C \|G(\gt)\|_{L^\infty(T)} \|\sigma\|_{L^2(T)} \|v\|_{L^2(T)} \\
&\le C \|\sigma\|_{L^2(T)} \|v\|_{L^2(T)} \\
&= C \|g_h-g\|_{L^2(T)} \|v\|_{L^2(T)}.
\end{align*}
Summing over all $T \in \mathcal{T}_h^N$ completes the proof.
\end{proof}

\begin{lemma} \label{lemma:sffbound}
We have
\begin{align*}
&\left| \mathring{\sum_F} \int_F \left\langle \llbracket \overline{\sff}(\gt) \rrbracket_F,  \left.\sigma\right|_F \right\rangle_{\gt} v\omega_F(\gt) \right|
\le C\max_T \left( h_T^{-1} \|g_h-g\|_{W^{1,\infty}(T)}\right) \\&\quad\times \left( \sum_T \|g_h-g\|_{L^2(T)}^2 + h_T^2 |g_h-g|_{H^1(T)}^2 \right)^{1/2} \left( \sum_T \|v\|_{L^2(T)}^2 + h_T^2 |v|_{H^1(T)}^2 \right)^{1/2}.
\end{align*}
\end{lemma}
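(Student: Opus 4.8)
The plan is to reduce the face integral to a product of an $L^\infty$ bound on the jump $\llbracket\overline{\sff}(\gt)\rrbracket_F$ and $L^2$ traces of $\sigma$ and $v$, and then to assemble the pieces using trace inequalities and a discrete Cauchy--Schwarz over faces.

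The heart of the argument is the first step: bounding $\|\llbracket\overline{\sff}(\gt)\rrbracket_F\|_{L^\infty(F)}$. The key observation is that $\overline{\sff}$, restricted to the tangent bundle of $F$, is determined pointwise on $F$ by the metric and its first derivatives. Indeed, writing $\sff(X,Y) = -g(n_g, \nabla_X Y)$ for coordinate vector fields $X,Y$ tangent to $F$, one sees that $\sff$ is a smooth function of $g$, $g^{-1}$, and $\partial g$ (through the Christoffel symbols and through the formula~(\ref{normal}) for $n_g$), and the same is true of $H=\Tr\sff$ and hence of $\overline{\sff}$. Since the smooth metric $g$ produces a single-valued second fundamental form, we have $\llbracket\overline{\sff}(g)\rrbracket_F=0$, so I may rewrite $\llbracket\overline{\sff}(\gt)\rrbracket_F = \llbracket\overline{\sff}(\gt)-\overline{\sff}(g)\rrbracket_F$. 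On each of the two $N$-simplices $T^\pm$ sharing $F$, the a priori bounds~(\ref{gtbound})--(\ref{eiginvbound}) confine $(\gt,\partial\gt)$ to a compact set on which the map $(g,\partial g)\mapsto\overline{\sff}$ is Lipschitz; therefore $\|\overline{\sff}(\gt)-\overline{\sff}(g)\|_{L^\infty(F)}\le C\|\gt-g\|_{W^{1,\infty}(T^\pm)}\le C\|g_h-g\|_{W^{1,\infty}(T^\pm)}$ on each side, and adding the two contributions yields $\|\llbracket\overline{\sff}(\gt)\rrbracket_F\|_{L^\infty(F)}\le C\|g_h-g\|_{W^{1,\infty}(T^+\cup T^-)}$.

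Next I would apply Cauchy--Schwarz pointwise in the $\gt$-inner product, followed by the norm equivalences~(\ref{Lpequiv2}), to obtain on each interior face $F$ the bound
\[
\left|\int_F\langle\llbracket\overline{\sff}(\gt)\rrbracket_F,\left.\sigma\right|_F\rangle_{\gt}v\,\omega_F(\gt)\right| \le C\,\|\llbracket\overline{\sff}(\gt)\rrbracket_F\|_{L^\infty(F)}\,\|\sigma\|_{L^2(F)}\,\|v\|_{L^2(F)}.
\]
Applying a trace inequality of the form~(\ref{traceineq}) componentwise to $\sigma$ and to $v$, from one adjacent simplex $T_F$, gives $\|\sigma\|_{L^2(F)}\|v\|_{L^2(F)}\le C h_{T_F}^{-1}(\|\sigma\|_{L^2(T_F)}^2+h_{T_F}^2|\sigma|_{H^1(T_F)}^2)^{1/2}(\|v\|_{L^2(T_F)}^2+h_{T_F}^2|v|_{H^1(T_F)}^2)^{1/2}$. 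Combining this with the jump bound and using shape-regularity (which makes $h_{T^+}$, $h_{T^-}$, and $h_F$ mutually comparable) to absorb $h_{T_F}^{-1}\|g_h-g\|_{W^{1,\infty}(T^\pm)}$ into $\max_T(h_T^{-1}\|g_h-g\|_{W^{1,\infty}(T)})$ produces a per-face estimate of exactly the desired shape.

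Finally I would sum over interior faces. Pulling out the maximum, a discrete Cauchy--Schwarz over $F$ separates the $\sigma$- and $v$-factors, and since each $N$-simplex serves as the chosen neighbor $T_F$ for at most $N+1$ faces, the resulting sums over faces are controlled by $(N+1)$ times the corresponding sums over simplices, which yields the stated inequality. I expect the main obstacle to be the first step, namely establishing the \emph{uniform} Lipschitz dependence of the trace-reversed second fundamental form on $(g,\partial g)$ and, in particular, verifying that the Lipschitz constant may be taken independent of $h$ and $t$ by invoking the uniform positive-definiteness and $W^{1,\infty}$ bounds on $\gt$. The remaining steps are routine finite-element manipulations.
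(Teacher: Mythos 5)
Your proposal is correct and follows essentially the same route as the paper: bound $\|\llbracket\overline{\sff}(\gt)\rrbracket_F\|_{L^\infty(F)}$ by $C\sum_{\pm}\|g_h-g\|_{W^{1,\infty}(T^\pm)}$ using that the smooth metric $g$ contributes no jump, then apply Cauchy--Schwarz, the trace inequality~(\ref{traceineq}), shape-regularity, and a discrete Cauchy--Schwarz over faces. The only (cosmetic) difference is that you obtain the jump bound from uniform Lipschitz dependence of $\overline{\sff}$ on $(g,\partial g)$ over the compact set fixed by~(\ref{gtbound})--(\ref{eiginvbound}), whereas the paper computes $\sff_{ij}(\gt)=-\widetilde{\Gamma}^N_{ij}/\sqrt{n^T\gt^{-1}n}$ in adapted coordinates and applies the product rule for jumps; both yield the same estimate.
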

\begin{proof}
Consider an interior $(N-1)$-simplex $F$.  By applying a Euclidean rotation and translation to the coordinates, we may assume without loss of generality that $F$ lies in the plane $x^N=0$.  In these coordinates, the second fundamental form associated with $\gt$ is given by
\begin{align*}
\sff_{ij}(\gt)
&= -\gt(n_{\gt}, \nabla_{\gt,e_i} e_j) \\
&= -\gt(n_{\gt}, \widetilde{\Gamma}^k_{ij} e_k ) \\
&= -n_{\gt}^\ell \gt_{\ell k} \widetilde{\Gamma}^k_{ij}, \quad i,j=1,2,\dots,N-1,
\end{align*}
where $e_1,e_2,\dots,e_N$ are the Euclidean coordinate basis vectors.  Since $n_{\gt} = \gt^{-1} n / \sqrt{n^T \gt^{-1} n}$ and $n$ points in the $x^N$ direction, we get
\[
\sff_{ij}(\gt) = -\frac{1}{\sqrt{n^T \gt^{-1} n}} \widetilde{\Gamma}^N_{ij}.
\]
The jump in this quantity across $F$ can be computed using the identity $\llbracket ab \rrbracket = \llbracket a \rrbracket \{b\} + \{a\} \llbracket b \rrbracket$, where $\{\cdot\}$ denotes the average across $F$, giving
\[
-\llbracket \sff_{ij}(\gt) \rrbracket = \left\llbracket \frac{1}{\sqrt{n^T \gt^{-1} n}} \right\rrbracket \left\{ \widetilde{\Gamma}^N_{ij} \right\} + \left\{ \frac{1}{\sqrt{n^T \gt^{-1} n}} \right\} \left\llbracket \widetilde{\Gamma}^N_{ij} \right\rrbracket.
\]
In view of~(\ref{sqrtdiff}), we have
\begin{align*}
\left\| \left\llbracket \frac{1}{\sqrt{n^T \gt^{-1} n}} \right\rrbracket \right\|_{L^\infty(F)} 
&\le C \left\| \llbracket \gt \rrbracket \right\|_{L^\infty(F)} \\
&\le C \left\| \llbracket g_h-g \rrbracket \right\|_{L^\infty(F)} \\
&\le C \left( \|g_h-g\|_{L^\infty(T_1)} + \|g_h-g\|_{L^\infty(T_2)} \right),
\end{align*}
where $T_1$ and $T_2$ are the two $N$-simplices that share the face $F$.  Here, we used the fact that $\gt = g + t(g_h-g)$ and $g$ is smooth.  Similarly, we have
\begin{align}
\left\| \left\llbracket \widetilde{\Gamma}^N_{ij} \right\rrbracket \right\|_{L^\infty(F)} 
&\le C \| \llbracket \gt \rrbracket \|_{W^{1,\infty}(F)} \nonumber \\
&\le C \| \llbracket g_h-g \rrbracket \|_{W^{1,\infty}(F)} \nonumber \\
&\le C \left( \|g_h-g\|_{W^{1,\infty}(T_1)} + \|g_h-g\|_{W^{1,\infty}(T_2)} \right). \label{jumpGamma}
\end{align}
Thus,
\[
\| \llbracket \sff(\gt) \rrbracket \|_{L^\infty(F)} \le  C \left( \|g_h-g\|_{W^{1,\infty}(T_1)} + \|g_h-g\|_{W^{1,\infty}(T_2)} \right).
\]
From this it follows easily that the same bound holds, possibly with a larger constant $C$, for the trace-reversed tensor $\overline{\sff}(\gt)=\sff(\gt)-H(\gt)\gt$:
\[
\| \llbracket \overline{\sff}(\gt) \rrbracket \|_{L^\infty(F)} \le  C \left( \|g_h-g\|_{W^{1,\infty}(T_1)} + \|g_h-g\|_{W^{1,\infty}(T_2)} \right).
\]
It follows that
\begin{align*}
&\left| \int_F \left\langle \llbracket \overline{\sff}(\gt) \rrbracket_F,  \left.\sigma\right|_F \right\rangle_{\gt} v\omega_F(\gt) \right| \\
&\le \|\llbracket \overline{\sff}(\gt) \rrbracket \|_{L^\infty(F,\gt)} \|\sigma|_F \|_{L^2(F,\gt)} \|v\|_{L^2(F,\gt)} \\
&\le C \|\llbracket \overline{\sff}(\gt) \rrbracket \|_{L^\infty(F)} \|\sigma|_F \|_{L^2(F)} \|v\|_{L^2(F)} \\
&\le  C \left( \sum_{i=1}^2 \|g_h-g\|_{W^{1,\infty}(T_i)} \right)  \left( h_{T_1}^{-1} \|\sigma\|_{L^2(T_1)}^2 + h_{T_1} |\sigma|_{H^1(T_1)}^2 \right)^{1/2} \left( h_{T_1}^{-1} \|v\|_{L^2(T_1)}^2 + h_{T_1} |v|_{H^1(T_1)}^2 \right)^{1/2}.
\end{align*}
By the shape-regularity of $\mathcal{T}_h$, we have $C^{-1} \le h_{T_1}/h_{T_2} \le C$ for some constant $C$ independent of $h$ and $F$, so
\begin{align*}
&\left| \mathring{\sum_F} \int_F \left\langle \llbracket \overline{\sff}(\gt) \rrbracket_F,  \left.\sigma\right|_F \right\rangle_{\gt} v\omega_F(\gt) \right| 
\le C\max_T \left( h_T^{-1} \|g_h-g\|_{W^{1,\infty}(T)}\right) \\&\quad\times \left( \sum_T \|g_h-g\|_{L^2(T)}^2 + h_T^2 |g_h-g|_{H^1(T)}^2 \right)^{1/2} \left( \sum_T \|v\|_{L^2(T)}^2 + h_T^2 |v|_{H^1(T)}^2 \right)^{1/2}.
\end{align*}

\end{proof}

\begin{remark} \label{remark:sffbound}
If $g_h$ is piecewise constant, then in~(\ref{jumpGamma}) we have the sharper bound
\[
\| \llbracket g_h-g \rrbracket \|_{W^{1,\infty}(F)} = \| \llbracket g_h-g \rrbracket \|_{L^\infty(F)} \le C \left( \|g_h-g\|_{L^\infty(T_1)} + \|g_h-g\|_{L^\infty(T_2)} \right)
\]
because $\frac{\partial g_h}{\partial x^i}=0$ and $\frac{\partial g}{\partial x^i}$ is continuous for each $i$.  This implies that for piecewise constant $g_h$, we can replace $\|g_h-g\|_{W^{1,\infty}(T)}$ by $\|g_h-g\|_{L^\infty(T)}$ in Lemma~\ref{lemma:sffbound}, yielding
\begin{align*}
&\left| \mathring{\sum_F} \int_F \left\langle \llbracket \overline{\sff}(\gt) \rrbracket_F,  \left.\sigma\right|_F \right\rangle_{\gt} v\omega_F(\gt) \right| \le C\max_T \left( h_T^{-1} \|g_h-g\|_{L^\infty(T)}\right) \\&\quad\times \left( \sum_T \|g_h-g\|_{L^2(T)}^2 + h_T^2 |g_h-g|_{H^1(T)}^2 \right)^{1/2} \left( \sum_T \|v\|_{L^2(T)}^2 + h_T^2 |v|_{H^1(T)}^2 \right)^{1/2}.
\end{align*}
\end{remark}

Now we turn our attention toward the third integral in~(\ref{ahgtilde}). In preparation for this, we will first use the shape-regularity assumption to show that the dihedral angles of every $N$-simplex in $\mathcal{T}_h$ (measured in the Euclidean metric) are uniformly bounded above and below.
\begin{lemma} \label{lemma:anglebound}
There exist constants $\theta_{\rm min}, \theta_{\rm max} \in (0,\pi)$ such that for every $h>0$ and every $T \in \mathcal{T}_h^N$, the dihedral angles in $T$ (measured in the Euclidean metric) all lie between $\theta_{\rm min}$ and $\theta_{\rm max}$.  
\end{lemma}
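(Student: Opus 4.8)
The plan is to exploit the fact that Euclidean dihedral angles are invariant under similarity transformations and then reduce the statement to a compactness argument on a normalized space of simplices. First I would observe that translating, rotating, and uniformly scaling an $N$-simplex leaves all of its dihedral angles unchanged. Consequently, for each $T \in \mathcal{T}_h^N$ I may apply a similarity carrying $T$ to a simplex $\widehat{T}$ of Euclidean diameter exactly $1$, with one chosen vertex fixed at the origin; this leaves the dihedral angles of $T$ and $\widehat{T}$ equal. Under such a rescaling the inradius scales by $1/h_T$, so the shape-regularity bound $h_T/\rho_T \le C_0$ becomes $\rho_{\widehat{T}} \ge 1/C_0$.

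Next I would set up the relevant parameter space. Encoding a normalized simplex by its ordered tuple of vertices $(v_0,\dots,v_N) \in (\mathbb{R}^N)^{N+1}$ with $v_0 = 0$, consider
\[
K = \bigl\{ (v_0,\dots,v_N) : v_0 = 0,\ \operatorname{diam} \le 1,\ \rho \ge 1/C_0 \bigr\},
\]
where $\operatorname{diam}$ and $\rho$ denote the diameter and inradius of the associated simplex. Since every $\widehat{T}$ lands in $K$, it suffices to bound the dihedral angles uniformly over $K$. The set $K$ is bounded, because all its vertices lie within Euclidean distance $1$ of the origin, and it is closed, because the diameter and the inradius are continuous functions of the vertex tuple (the inradius extending continuously by the value $0$ on affinely degenerate configurations), so that $\operatorname{diam} \le 1$ and $\rho \ge 1/C_0$ cut out a closed set. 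Hence $K$ is compact, and every member of $K$ is a \emph{non-degenerate} simplex, since $\rho \ge 1/C_0 > 0$.

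Finally I would invoke continuity and compactness. An $N$-simplex has only finitely many dihedral angles, one for each of its $\binom{N+1}{2}$ faces of dimension $N-2$, and each such angle is a continuous function of the vertex tuple on the set of non-degenerate configurations: it equals $\arccos(-n_p \cdot n_q)$, where $n_p, n_q$ are the unit outward normals of the two facets meeting at that face, and these normals depend continuously on the vertices as long as the facets are non-degenerate $(N-1)$-simplices. On the compact set $K$ each of these finitely many angle functions is therefore continuous and takes values in the open interval $(0,\pi)$, so its image is a compact subset of $(0,\pi)$ and is bounded away from both endpoints. Taking $\theta_{\min}$ and $\theta_{\max}$ to be, respectively, the minimum and maximum of these finitely many extrema yields constants in $(0,\pi)$ that bound every dihedral angle of every $\widehat{T}$, and hence of every $T \in \mathcal{T}_h^N$, uniformly in $h$.

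I expect the only genuine subtlety to be the assertion that a dihedral angle cannot approach $0$ or $\pi$ while the inradius stays bounded below: this is precisely what forces $\theta_{\min} > 0$ and $\theta_{\max} < \pi$, and it is captured cleanly by the compactness of $K$ together with the non-degeneracy of every configuration in $K$. The single geometric input is that degeneration of a simplex (a dihedral angle tending to $0$ or $\pi$) forces its inradius to vanish, which is excluded here; everything else is the routine verification that diameter, inradius, and dihedral angle depend continuously on the vertices.
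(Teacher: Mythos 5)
Your argument is correct, but it proceeds by a genuinely different route from the paper. The paper generalizes the three-dimensional argument of Gong et al.\ and produces an \emph{explicit quantitative} bound: by projecting a vertex $x_1$ orthogonally onto the hyperplane of the opposite facet and onto the affine hull of the $(N-2)$-face $S$, and comparing volumes of $T$ with a sub-simplex built over the inscribed sphere, it shows $\sin\theta_{ST}\ge 2\rho_T/h_T$, so that shape-regularity immediately yields $\theta_{\min}=\arcsin(2/C_0)$ and $\theta_{\max}=\pi-\arcsin(2/C_0)$. Your proof instead normalizes by a similarity and runs a soft compactness argument on the space of diameter-$\le 1$ simplices with inradius $\ge 1/C_0$; it establishes existence of the bounds but produces no explicit constants. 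The one point you should state a little more carefully is the closedness of $K$: what is actually needed is that the superlevel set $\{\rho\ge 1/C_0\}$ is closed, i.e.\ upper semicontinuity of the inradius at limit configurations (including degenerate ones). This does hold --- if $B(c_n,1/C_0)\subset T_n$ and the vertex tuples converge, then the convex hulls converge in Hausdorff distance and a limit ball of the same radius sits inside the limit hull --- but the phrase ``the inradius extends continuously by the value $0$ on degenerate configurations'' elides this step, and it is exactly the step that encodes the single geometric input you correctly identify (degeneration forces the inradius to vanish). With that point spelled out, your proof is complete; the trade-off relative to the paper is non-constructiveness versus brevity and routine verifications.
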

\begin{proof}
This fact is proved in dimension $N=3$ in~\cite[Lemma 3.6]{gong2022note}.  We generalize their proof to dimension $N \ge 3$ as follows.  Given $N+1$ points $x_1,x_2,\dots,x_{N+1}$ in general position in $\mathbb{R}^N$, let $T=[x_1,x_2,\dots,x_{N+1}]$ denote the $N$-simplex with vertices $x_1,x_2,\dots,x_{N+1}$.  Consider two faces $F_1=[x_1,x_3,x_4,\dots,x_{N+1}]$ and $F_2=[x_2,x_3,x_4,\dots,x_{N+1}]$ that intersect along the $(N-2)$-dimensional subsimplex $S=[x_3,x_4,\dots,x_{N+1}]$.  Throughout what follows, we work in the Euclidean metric.  Let $A$ be the orthogonal projection of $x_1$ onto the $(N-1)$-dimensional hyperplane containing $F_2$, and let $B$ be the orthogonal projection of $x_1$ onto the $(N-2)$-dimensional hyperplane containing $S$.  Observe that both $[x_1,A]$ and $[x_1,B]$ are orthogonal to $S$, since $S \subset F_2$.  Thus, the triangle $[x_1,A,B]$ is orthogonal to $S$.  This triangle is a right triangle with hypotenuse $[x_1,B]$, so the dihedral angle $\theta_{ST}$ along $S$ satisfies
\[
\sin\theta_{ST} = \frac{|[x_1,A]|}{|[x_1,B]|},
\]
where $|\cdot|$ denotes the Euclidean volume (i.e.~length in this case).  Obviously, $|[x_1,B]|$ is bounded above by $h_T$, the diameter of $T$.  In addition, $|[x_1,A]|$ is bounded from below by 2 times $\rho_T$, the inradius of $T$.  To see why, we generalize the argument in~\cite[Proposition 2.3]{gong2022note}, bearing in mind that our definition of $\rho_T$ differs from theirs by a factor of $2$.  Consider the inscribed $(N-1)$-sphere in $T$, whose center $C$ lies at a distance $\rho_T$ from $F_2$.  Let $D$ be the point where this inscribed sphere touches $F_2$, and let $E$ be the point diametrically opposite to $D$ on this sphere.  The line segment $[D,E]$ is orthogonal to $F_2$, so the volume of the $N$-simplex $T'=[E,x_2,x_3,x_4,\dots,x_{N+1}]$ satisfies
\[
|T'| = \frac{1}{N} |[D,E]| |F_2| = \frac{2\rho_T}{N} |F_2|.
\]
Since $T' \subset T$, we have 
\[
|T'| \le |T| = \frac{1}{N} |[x_1,A]| |F_2|,
\]
so
\[
2\rho_T \le |[x_1,A]|.
\]
Thus,
\[
\sin\theta_{ST} \ge \frac{2\rho_T}{h_T}.
\]
The result follows from this bound and the shape-regularity of $\mathcal{T}_h$.
\end{proof}

Next we show that Lemma~\ref{lemma:anglebound} remains valid when one measures angles with $g$ rather than the Euclidean metric $\delta$.

\begin{lemma}
Upon reducing the value of $h_0$ if necessary, there exist constants $\theta_{{\rm min},g}, \theta_{{\rm max},g} \in (0,\pi)$ such that for every $h \le h_0$, every $T \in \mathcal{T}_h^N$, every $(N-2)$-simplex $S \subset \partial T$, and every point $p \in S$, the dihedral angle in $T$ at $p$ (measured by $g$) lies between $\theta_{{\rm min},g}$ and $\theta_{{\rm max},g}$.  
\end{lemma}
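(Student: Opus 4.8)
The plan is to reduce the computation of the $g$-dihedral angle to a comparison between the \emph{fixed} Euclidean face normals, measured in the inner product induced by $g^{-1}$, and then to combine the Euclidean angle bounds of Lemma~\ref{lemma:anglebound} with the eigenvalue bounds on the metric. Fix $T$, a codimension-$2$ simplex $S \subset \partial T$, a point $p \in S$, and let $F^+, F^-$ be the two faces of $T$ containing $S$ with \emph{Euclidean} outward unit normals $n^+, n^-$, which depend only on the flat geometry of $T$. First I would use~(\ref{normal}) at $p$ to write the $g$-unit normals as $n_g^\pm = g^{-1} n^\pm / \sqrt{(n^\pm)^T g^{-1} n^\pm}$ and substitute into $\cos\theta_{ST} = -\left.g\right|_T(n_g^+, n_g^-)$, obtaining
\[
\cos\theta_{ST}(g) = -\frac{(n^+)^T g^{-1} n^-}{\sqrt{(n^+)^T g^{-1} n^+}\,\sqrt{(n^-)^T g^{-1} n^-}} = -\cos\phi,
\]
where $\phi \in (0,\pi)$ is the angle between $n^+$ and $n^-$ in the $g^{-1}$-inner product at $p$. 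Since $n^+, n^-$ are linearly independent and $g^{-1}$ is positive definite, strict Cauchy--Schwarz gives $\phi \in (0,\pi)$; hence $\theta_{ST}(g) = \pi - \phi$, and it suffices to bound $\phi$ uniformly away from $0$ and $\pi$.

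Next I would transfer the Euclidean information. Because $\cos\theta_{ST}(\delta) = -(n^+)^T n^-$, the Euclidean angle $\psi$ between $n^+$ and $n^-$ equals $\pi - \theta_{ST}(\delta)$; by Lemma~\ref{lemma:anglebound}, $\theta_{ST}(\delta) \in [\theta_{\min},\theta_{\max}] \subset (0,\pi)$, so $\sin\psi = \sin\theta_{ST}(\delta) \ge \min(\sin\theta_{\min},\sin\theta_{\max}) =: s_0 > 0$, uniformly in $h$, $T$, $S$, $p$. The key quantitative step is the elementary estimate that for any symmetric positive definite $M$ with eigenvalues in $[\lambda_{\min},\lambda_{\max}]$ and any Euclidean unit vectors $u,v$ at Euclidean angle $\psi$, the $M$-angle $\phi$ between them satisfies $\sin\phi \ge (\lambda_{\min}/\lambda_{\max})\sin\psi$. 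I would prove this by writing $\sin^2\phi$ as the ratio of the $M$-Gram determinant of $(u,v)$ to $\langle u,u\rangle_M\langle v,v\rangle_M$, bounding the denominator above by $\lambda_{\max}^2$ and the numerator below through $\mathrm{Gram}_M(u,v) = \langle u,u\rangle_M \min_t \langle v-tu,v-tu\rangle_M \ge \lambda_{\min}^2 \sin^2\psi$, where the last bound uses that the Euclidean distance from the unit vector $v$ to the line $\mathbb{R}u$ is exactly $\sin\psi$. Applying this with $M = g^{-1}$ at $p$, whose eigenvalues are controlled by bounds of the type~(\ref{eigbound})--(\ref{eiginvbound}), yields $\sin\theta_{ST}(g) = \sin\phi \ge c\,s_0 > 0$ with $c$ depending only on the eigenvalue bounds of $g$. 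Taking $\theta_{\min,g} = \arcsin(c\,s_0)$ and $\theta_{\max,g} = \pi - \arcsin(c\,s_0)$ then gives the claim.

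Finally, I would observe that every constant is uniform in $h$, $T$, $S$, and $p$: the Euclidean floor $s_0$ comes from Lemma~\ref{lemma:anglebound} (valid for all $h$), and the factor $c$ depends only on the uniform eigenvalue control of the metric. The reduction of $h_0$ enters solely through securing these eigenvalue bounds; in particular, the same argument applies verbatim with $g$ replaced by $\gt(t)$ for any $t \in [0,1]$ once~(\ref{eigbound}) and~(\ref{eiginvbound}) hold, which is the form needed downstream to keep $\theta_{ST}(\gt(t)) \in (0,\pi)$ along the path and thus to justify Lemma~\ref{lemma:angledot}. I expect the only genuine obstacle to be this quantitative Gram-determinant estimate: the content is that the angle stays \emph{strictly and uniformly} inside $(0,\pi)$, not merely in $(0,\pi)$ pointwise, so a purely qualitative argument is insufficient; a compactness argument over the compact families of normalized shape-regular simplices and of admissible metric values would be an acceptable but less explicit alternative.
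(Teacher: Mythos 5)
Your argument is correct, but it proves the lemma by a genuinely different route than the paper. The paper argues by contradiction: if no uniform lower bound $\theta_{{\rm min},g}>0$ existed, one could extract a sequence of simplices, faces, and points whose $g$-dihedral angles tend to $0$, and then use compactness of the Grassmannian (together with the uniform nondegeneracy of $g$ on the compact set $\overline{\Omega}$) to conclude that the corresponding Euclidean angles also tend to $0$, contradicting Lemma~\ref{lemma:anglebound}; the upper bound is handled symmetrically. Your proof instead is direct and quantitative: the identity $\cos\theta_{ST}(g) = -\langle n^+,n^-\rangle_{g^{-1}}/(\|n^+\|_{g^{-1}}\|n^-\|_{g^{-1}})$ follows correctly from~(\ref{normal}), the Gram-determinant formula $\sin^2\phi = \mathrm{Gram}_M(u,v)/(\langle u,u\rangle_M\langle v,v\rangle_M)$ with $\mathrm{Gram}_M(u,v)=\langle u,u\rangle_M\min_t\langle v-tu,v-tu\rangle_M$ is standard and your two-sided eigenvalue bounds on numerator and denominator are valid, and the resulting estimate $\sin\theta_{ST}(g)\ge(\lambda_{\min}/\lambda_{\max})\sin\theta_{ST}(\delta)$ combined with Lemma~\ref{lemma:anglebound} gives explicit admissible constants $\theta_{{\rm min},g}=\arcsin(c\,s_0)$ and $\theta_{{\rm max},g}=\pi-\arcsin(c\,s_0)$. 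What your approach buys is an explicit constant depending only on the condition number of $g$ (and of $\gt(t)$, via~(\ref{eigbound})--(\ref{eiginvbound})) and the Euclidean angle floor, plus the observation that the same bound holds uniformly along the whole path $\gt(t)$; what the paper's compactness argument buys is brevity at the cost of any explicit rate. You correctly note at the end that the compactness alternative is available --- that is in fact exactly the paper's proof. One cosmetic remark: for the lemma as stated (which concerns only the smooth metric $g$), no reduction of $h_0$ is actually needed in your argument, since the eigenvalue bounds for $g$ itself follow from smoothness and compactness of $\overline{\Omega}$ independently of $h$; the reduction of $h_0$ only matters if one wants the same conclusion for $\gt(t)$, as you point out.
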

\begin{proof}
If there were no such lower bound $\theta_{{\rm min},g}>0$, then there would exist a sequence of $N$-simplices $T_1 \in T_{h_1}$, $T_2 \in \mathcal{T}_{h_2}$, $\dots$ with faces $F_1^{(1)}, F_1^{(2)} \subset T_1$, $F_2^{(1)}, F_2^{(2)} \subset T_2$, $\dots$ and points $p_1 \in F_1^{(1)} \cap F_1^{(2)} ,p_2 \in F_2^{(1)} \cap F_2^{(2)},\dots$ such that
\[
\angle_{\left.g\right|_{T_i}(p_i)}(F_i^{(1)},F_i^{(2)}) \to 0
\]
as $i \to \infty$, where $\angle_g(X,Y)$ denotes the angle between $X$ and $Y$ as measured by $g$.  Using the compactness of the Grassmannian, this implies that, after extracting a subsequence which we do not relabel,
\[
\angle_{\delta}(F_i^{(1)},F_i^{(2)}) \to 0,
\]
where $\angle_\delta(X,Y)$ denotes the angle between $X$ and $Y$ as measured by the Euclidean metric $\delta$. This contradicts the assumed positive lower bound on the Euclidean dihedral angles.  The existence of an upper bound $\theta_{{\rm max},g}<\pi$ is proved similarly.
\end{proof}

Now we are ready to estimate the third integral in~(\ref{ahgtilde}).

\begin{lemma} \label{lemma:angledefectbound}
We have
\begin{align*}
&\left| \mathring{\sum_S} \int_S \left\langle \Theta_S(\gt) \left.\gt\right|_S, \left.\sigma\right|_S \right\rangle_{\gt} v \omega_S(\gt) \right| \\
&\le C \left( \max_T h_T^{-2} \|g_h-g\|_{L^\infty(T)} \right) \left( \sum_T \|g_h-g\|_{L^2(T)}^2 + h_T^2 |g_h-g|_{H^1(T)}^2 + h_T^4 |g_h-g|_{H^2(T)}^2 \right)^{1/2} \\
&\quad\times \left( \sum_T \|v\|_{L^2(T)}^2 + h_T^2 |v|_{H^1(T)}^2 + h_T^4 |v|_{H^2(T)}^2 \right)^{1/2}.
\end{align*}
\end{lemma}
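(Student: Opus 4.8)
The plan is to estimate the integral over each $(N-2)$-simplex $S$ separately and then assemble the pieces with Cauchy--Schwarz. The decisive geometric input is that the angle defect of the \emph{smooth} metric $g$ vanishes identically: at each point $p\in S$ the smooth metric supplies a single value $g(p)$ common to every $T\supset S$, and a linear change of coordinates turning the constant metric $g(p)$ into $\delta$ carries the flat simplices around $S$ to flat simplices with the same incidence structure, so the $g(p)$-dihedral angles sum to $2\pi$. Hence $\Theta_S(g)\equiv 0$, and consequently $\Theta_S(\gt)=\Theta_S(\gt)-\Theta_S(g)$.

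First I would bound $\|\Theta_S(\gt)\|_{L^\infty(S)}$. Integrating~(\ref{angledefectdot}) along the path $g+s(\gt-g)$, $s\in[0,1]$, whose velocity is the fixed tensor $t\sigma$ with $\sigma=g_h-g$, and using $\Theta_S(g)=0$, gives
\[
\Theta_S(\gt)=\frac{t}{2}\int_0^1\sum_{F\supset S}\llbracket\sigma(n,\tau)\rrbracket_F\,ds,
\]
the normals and tangents being taken with respect to the current metric. The two angle-bound lemmas established just above guarantee that, after possibly shrinking $h_0$, the dihedral angles remain in a compact subinterval of $(0,\pi)$ along the entire path, so~(\ref{angledefectdot}) is valid. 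Because the two outward normals to a flat face are antipodal in every metric, each term equals an ordinary jump of the normal--tangential component of $\sigma$ and is bounded pointwise by $C\max_{T\supset S}\|g_h-g\|_{L^\infty(T)}$; since there are boundedly many faces $F\supset S$, this yields $\|\Theta_S(\gt)\|_{L^\infty(S)}\le C\max_{T\supset S}\|g_h-g\|_{L^\infty(T)}$.

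Next I would supply a codimension-$2$ trace inequality. Applying the trace inequality~(\ref{traceineq}) twice---once from $T$ to a face $F\supset S$ and once from $F$ to $S$---and using $h_F\sim h_T$ from shape-regularity gives, for any $w$ with $w|_T\in H^2(T)$,
\[
\|w\|_{L^2(S)}^2\le C\,h_T^{-2}\left(\|w\|_{L^2(T)}^2+h_T^2|w|_{H^1(T)}^2+h_T^4|w|_{H^2(T)}^2\right).
\]
Applying this to the (single-valued) tangential trace of $\sigma$ and to $v$, and writing $\langle\Theta_S\,\gt|_S,\sigma|_S\rangle_{\gt}=\Theta_S\,\Tr_{\gt}(\sigma|_S)$ together with $|\Tr_{\gt}(\sigma|_S)|\le C|\sigma|_S|$, produces on each $S$ a bound of the form $C\,\|\Theta_S\|_{L^\infty(S)}\,h_T^{-2}\,A_\sigma(S)^{1/2}A_v(S)^{1/2}$, where $A_w(S)=\sum_{T\supset S}(\|w\|_{L^2(T)}^2+h_T^2|w|_{H^1(T)}^2+h_T^4|w|_{H^2(T)}^2)$. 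Inserting the $L^\infty$ bound on $\Theta_S(\gt)$, applying Cauchy--Schwarz over $S$, and using that each $T$ contains boundedly many $(N-2)$-faces and that the diameters of simplices sharing a given $S$ are comparable (shape-regularity) collapses the local patches $A_\sigma(S),A_v(S)$ into global sums over $T$ and factors out the prefactor $\max_T h_T^{-2}\|g_h-g\|_{L^\infty(T)}$, giving exactly the claimed estimate.

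I expect the main obstacle to be the clean $L^\infty$ bound on the angle defect---specifically, the recognition that $\Theta_S(g)\equiv 0$ for the smooth metric, so that only the jump of the normal--tangential part of $\sigma$ survives. A naive term-by-term Lipschitz estimate of the individual dihedral angles $\theta_{ST}(\gt)-\theta_{ST}(g)$ would only yield an $O(1)$ bound on $\Theta_S$; it is the exact vanishing of $\Theta_S(g)$ (equivalently, the telescoping jump structure in~(\ref{angledefectdot})) that upgrades this to $O(\|g_h-g\|_{L^\infty})$ and thereby produces the $h_T^{-2}$ prefactor rather than a useless $O(1)$ one. The codimension-$2$ trace inequality with its precise powers of $h_T$ is the remaining technical point, but it follows routinely from two applications of~(\ref{traceineq}).
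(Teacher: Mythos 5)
Your proposal is correct and follows essentially the same strategy as the paper: an $L^\infty$ bound $\|\Theta_S(\gt)\|_{L^\infty(S)} \le C\max_{T\supset S}\|g_h-g\|_{L^\infty(T)}$ that hinges on $\Theta_S(g)=0$ for the smooth metric, a codimension-2 trace inequality obtained by applying~(\ref{traceineq}) twice, and a Cauchy--Schwarz assembly over the interior $(N-2)$-simplices using shape-regularity and bounded overlap. The one variation is how the angle-defect bound is derived: you integrate the evolution formula~(\ref{angledefectdot}) along the segment from $g$ to $\gt$ and bound each jump term pointwise by $C\|\sigma\|_{L^\infty}$, whereas the paper estimates $|\cos\theta_{ST}(\gt)-\cos\theta_{ST}(g)|\le C|g_h-g|$ directly via Lemma~\ref{lemma:ndiff} and then converts to angle differences using the uniform bounds $\theta_{ST}(g)\in[\theta_{{\rm min},g},\theta_{{\rm max},g}]$ together with $\sum_{T\supset S}\theta_{ST}(g)=2\pi$ --- so the ``term-by-term Lipschitz estimate'' you dismiss as naive is in fact exactly what the paper does (it is not merely $O(1)$, since combined with the vanishing of $\Theta_S(g)$ it yields the same $O(\|g_h-g\|_{L^\infty})$ bound), and both routes require the dihedral angles to remain in a compact subinterval of $(0,\pi)$ for $h\le h_0$.
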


\begin{proof}
Fix an interior $(N-2)$-simplex $S$ and an $N$-simplex $T$ containing $S$.  At any point $p$ along $S$, we have
\begin{align*}
\cos\theta_{ST}(g)-\cos\theta_{ST}(\gt)
&= \gt(n_{\gt}^{(1)}, n_{\gt}^{(2)}) - g(n_g^{(1)}, n_g^{(2)}) \\
&= \gt( n_{\gt}^{(1)} - n_g^{(1)}, n_{\gt}^{(2)} - n_g^{(2)} ) + \gt( n_{\gt}^{(1)} - n_g^{(1)}, n_g^{(2)} ) + \gt( n_g^{(1)}, n_{\gt}^{(2)} - n_g^{(2)} ) \\
&\quad + \gt(n_g^{(1)},n_g^{(2)}) - g(n_g^{(1)},n_g^{(2)}),
\end{align*}
where $n_g^{(1)}$ and $n_g^{(2)}$ are suitably oriented unit normal vectors (with respect to $\left.g\right|_T$) to the two faces of $T$ containing $S$, and similarly for  $n_{\gt}^{(1)}$ and $n_{\gt}^{(2)}$.  Using Lemma~\ref{lemma:ndiff}, we see that at the point $p$,
\[
|\cos\theta_{ST}(\gt)-\cos\theta_{ST}(g)| \le C|\gt-g| \le C|g_h-g|
\]
for all $h$ sufficiently small.  Since there are constants $\theta_{{\rm min},g},\theta_{{\rm max},g} \in (0,\pi)$ such that $\theta_{{\rm min},g} \le \theta_{ST}(g) \le \theta_{{\rm max},g}$, we get
\[
|\theta_{ST}(\gt)-\theta_{ST}(g)| \le C|g_h-g| \le C\|g_h-g\|_{L^\infty(T)}.
\]
Summing over $T \supset S$ and noting that $\sum_{T \supset S} \theta_{ST}(g) = 2\pi$, we get
\begin{equation} \label{ThetaSbound}
|\Theta_S(\gt)| = |\Theta_S(\gt)-\Theta_S(g)| \le \sum_{T \supset S} |\theta_{ST}(\gt)-\theta_{ST}(g)| \le C \sum_{T \supset S} \|g_h-g\|_{L^\infty(T)}.
\end{equation}
Now we are almost ready to estimate the integral $\int_S \left\langle \Theta_S(\gt) \left.\gt\right|_S, \left.\sigma\right|_S \right\rangle_{\gt} v \omega_S(\gt)$.  We first note that
\[
\|v\|_{L^2(S)}^2 \le C\left( h_{T}^{-2} \|v\|_{L^2(T)}^2 + |v|_{H^1(T)}^2 + h_{T}^2 |v|_{H^2(T)}^2 \right),
\]
which can be proved using a codimension-2 trace inequality and a scaling argument, or by applying the codimension-1 trace inequality~(\ref{traceineq}) twice (to $v$ rather than $dv$).  If $T_1,T_2,\dots,T_m$ are the $N$-simplices that share the $(N-2)$-simplex $S$, then we have
\begin{align*}
&\left| \int_S \left\langle \Theta_S(\gt) \left.\gt\right|_S, \left.\sigma\right|_S \right\rangle_{\gt} v \omega_S(\gt) \right| \\
&\le C \|\Theta_S(\gt)\|_{L^\infty(S,\gt)} \|\sigma|_S\|_{L^2(S,\gt)} \|v\|_{L^2(S,\gt)} \\
&\le C \|\Theta_S(\gt)\|_{L^\infty(S)} \|\sigma|_S\|_{L^2(S)} \|v\|_{L^2(S)} \\
&\le C \left( \sum_{i=1}^m \|g_h-g\|_{L^\infty(T_i)} \right) \left( h_{T_1}^{-2} \|\sigma\|_{L^2(T_1)}^2 + |\sigma|_{H^1(T_1)}^2 + h_{T_1}^2 |\sigma|_{H^2(T_1)}^2 \right)^{1/2} \\
&\quad\times \left( h_{T_1}^{-2} \|v\|_{L^2(T_1)}^2 + |v|_{H^1(T_1)}^2 + h_{T_1}^2 |v|_{H^2(T_1)}^2 \right)^{1/2}.
\end{align*}
The proof is completed by summing over all interior $(N-2)$-simplices $S$ and substituting $\sigma=g_h-g$.
\end{proof}

Collecting our results, we can state a bound on the bilinear form $a_h(\gt;\cdot,\cdot)$.
\begin{proposition} \label{prop:ahbound}
For every $h \le h_0$, every $t \in [0,1]$, and every $v \in V$, we have (with $\sigma = g_h-g$),
\begin{align*}
|a_h(\gt; \sigma, v)| 
&\le C \left( 1 + \max_T h_T^{-2} \|g_h-g\|_{L^\infty(T)} + \max_T h_T^{-1} |g_h-g|_{W^{1,\infty}(T)} \right) \\
&\quad\times \left( \sum_T \|g_h-g\|_{L^2(T)}^2 + h_T^2 |g_h-g|_{H^1(T)}^2 + h_T^4 |g_h-g|_{H^2(T)}^2 \right)^{1/2} \\
&\quad\times\left( \sum_T \|v\|_{L^2(T)}^2 + h_T^2 |v|_{H^1(T)}^2 + h_T^4 |v|_{H^2(T)}^2 \right)^{1/2}.
\end{align*}
\end{proposition}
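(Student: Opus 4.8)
The plan is to bound each of the three terms in the definition~(\ref{ahgtilde}) of $a_h(\gt;\sigma,v)$ separately and then combine the estimates by the triangle inequality. The three lemmas just established do precisely this: Lemma~\ref{lemma:einbound} controls the Einstein-tensor term over $N$-simplices, Lemma~\ref{lemma:sffbound} controls the jump-in-$\overline{\sff}$ term over interior faces, and Lemma~\ref{lemma:angledefectbound} controls the angle-defect term over interior $(N-2)$-simplices. It therefore remains only to verify that each of these three bounds is dominated by the single product claimed in Proposition~\ref{prop:ahbound}, namely the product of the prefactor $1 + \max_T h_T^{-2}\|g_h-g\|_{L^\infty(T)} + \max_T h_T^{-1}|g_h-g|_{W^{1,\infty}(T)}$ with the $\sigma$-factor and the $v$-factor, each of which carries $L^2$, $H^1$, and $H^2$ contributions weighted by powers $h_T^0$, $h_T^2$, and $h_T^4$.

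The matching is routine, since adjoining the nonnegative $H^2$-terms only enlarges the $\sigma$- and $v$-factors. First, the Einstein term from Lemma~\ref{lemma:einbound} is bounded by $C\|g_h-g\|_{L^2(\Omega)}\|v\|_{L^2(\Omega)}$, which is at most the constant $1$ in the prefactor times the $L^2$-pieces of the $\sigma$- and $v$-factors. Next, the angle-defect term from Lemma~\ref{lemma:angledefectbound} already has exactly the claimed form, with prefactor $\max_T h_T^{-2}\|g_h-g\|_{L^\infty(T)}$. The only point requiring care is the second-fundamental-form term from Lemma~\ref{lemma:sffbound}, whose prefactor is $\max_T h_T^{-1}\|g_h-g\|_{W^{1,\infty}(T)}$. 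Writing $\|g_h-g\|_{W^{1,\infty}(T)} \le C\big(\|g_h-g\|_{L^\infty(T)}+|g_h-g|_{W^{1,\infty}(T)}\big)$ and using $h_T \le 1$ (which holds after shrinking $h_0$), I would bound $h_T^{-1}\|g_h-g\|_{L^\infty(T)} \le h_T^{-2}\|g_h-g\|_{L^\infty(T)}$, so that this prefactor is controlled by the sum of the two $\max_T$-terms already present in the prefactor of Proposition~\ref{prop:ahbound}.

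Because all the heavy lifting has been carried out in the three lemmas, there is no genuine analytic obstacle here; the work is purely organizational. The one step deserving explicit mention is the passage from $h_T^{-1}$ to $h_T^{-2}$ on the $L^\infty$-part of the second-fundamental-form prefactor, which is the very reason the prefactor of the proposition features $h_T^{-2}\|g_h-g\|_{L^\infty(T)}$ rather than $h_T^{-1}\|g_h-g\|_{L^\infty(T)}$; this is legitimate precisely because we have reserved the freedom to reduce $h_0$ and may assume $h_T \le 1$. Summing the three estimates and absorbing all constants into $C$ then yields the bound claimed in Proposition~\ref{prop:ahbound}.
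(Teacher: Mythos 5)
Your proposal is correct and follows exactly the paper's route: the paper's proof of Proposition~\ref{prop:ahbound} is simply to combine Lemmas~\ref{lemma:einbound}, \ref{lemma:sffbound}, and~\ref{lemma:angledefectbound}, which is what you do. Your extra remark on splitting $\|g_h-g\|_{W^{1,\infty}(T)}$ into its $L^\infty$ and $W^{1,\infty}$-seminorm parts and absorbing $h_T^{-1}\|g_h-g\|_{L^\infty(T)}$ into $h_T^{-2}\|g_h-g\|_{L^\infty(T)}$ (since $h_T$ is bounded) correctly fills in the only bookkeeping detail the paper leaves implicit.
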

\begin{proof}
Combine Lemmas~\ref{lemma:einbound},~\ref{lemma:sffbound}, and~\ref{lemma:angledefectbound}.
\end{proof}
Upon combining Proposition~\ref{prop:bhbound} with Proposition~\ref{prop:ahbound}, we see that
\[
\begin{split}
\| (R\omega)_{\rm dist}(g_h) - (R\omega)(g)\|_{H^{-2}(\Omega)} &\le C \left( 1 + \max_T h_T^{-2} \|g_h-g\|_{L^\infty(T)} + \max_T h_T^{-1} |g_h-g|_{W^{1,\infty}(T)}  \right) \\
&\quad\times \left( \|g_h-g\|_{L^2(\Omega)}^2 + \sum_T h_T^2 |g_h-g|_{H^1(T)}^2 + \sum_T h_T^4 |g_h-g|_{H^2(T)}^2 \right)^{1/2}.
\end{split}
\]
This completes the proof of Theorem~\ref{thm:conv}.  Corollary~\ref{cor:conv} then follows from~(\ref{optimalorder}) and the bounds 
\begin{align*}
\|g_h-g\|_{L^2(\Omega)} &\le |\Omega|^{1/2-1/p} \|g_h-g\|_{L^p(\Omega)}, \\
\left( \sum_T h_T^2 |g_h-g|_{H^1(T)}^2 \right)^{1/2} &\le |\Omega|^{1/2-1/p} \left( \sum_T h_T^p |g_h-g|_{W^{1,p}(T)}^p \right)^{1/p}, \\
\left( \sum_T h_T^4 |g_h-g|_{H^2(T)}^2 \right)^{1/2} &\le |\Omega|^{1/2-1/p} \left( \sum_T h_T^{2p} |g_h-g|_{W^{2,p}(T)}^p \right)^{1/p},
\end{align*}
which hold for all $p \in [2,\infty]$ (with the obvious modifications for $p=\infty$).

\begin{remark} \label{remark:suboptimal}
Notice that the analysis above yields
\begin{align}
|b_h(\gt;\sigma,v)| &= O(h^{r+1}), && \text{(by Proposition~\ref{prop:bhbound})}, \\
\left| \sum_T \int_T \langle G(\gt), \sigma \rangle_{\gt} \, v \omega_T(\gt) \right| &= O(h^{r+1}), && \text{(by Lemma~\ref{lemma:einbound})}, \label{einO} \\
\left| \mathring{\sum_F} \int_F \left\langle \llbracket \overline{\sff}(\gt) \rrbracket_F,  \left.\sigma\right|_F \right\rangle_{\gt} v\omega_F(\gt) \right| &= \begin{cases} 
O(h), &\mbox{ if }  r = 0,\\
O(h^{2r}), &\mbox{ if } r \ge 1,
\end{cases}
&&\begin{aligned}
&\text{(by Remark~\ref{remark:sffbound})}, \\
&\text{(by Lemma~\ref{lemma:sffbound})},
\end{aligned} \\
\left| \mathring{\sum_S} \int_S \left\langle \Theta_S(\gt) \left.\gt\right|_S, \left.\sigma\right|_S \right\rangle_{\gt} v \omega_S(\gt) \right| &= O(h^{2r}), && \text{(by Lemma~\ref{lemma:angledefectbound})} \label{angledefectO}
\end{align}
for any optimal-order interpolant $g_h$ of $g$ having degree $r \ge 0$.  Bearing in mind that~(\ref{einO}-\ref{angledefectO}) vanish when $N=2$, we see that the above estimates lead to an optimal error estimate $\| (R\omega)_{\rm dist}(g_h) - (R\omega)(g)\|_{H^{-2}(\Omega)} = O(h^{r+1})$ in all cases except when $N \ge 3$ and $r=0$, where we obtain $\| (R\omega)_{\rm dist}(g_h) - (R\omega)(g)\|_{H^{-2}(\Omega)} = O(1)$ because of~(\ref{angledefectO}).  Numerical experiments suggest that these analytical results are sharp for a general optimal-order interpolant, whereas for the canonical interpolant the estimate~(\ref{angledefectO}) improves to $O(h^{2(r+1)})$, yielding $\| (R\omega)_{\rm dist}(g_h) - (R\omega)(g)\|_{H^{-2}(\Omega)} = O(h)$ when $r=0$; cf. Figure~\ref{fig:conv_plot_crit_term}.
\end{remark}

\section{Numerical examples} \label{sec:numerical}
In this section we present numerical experiments in dimension $N=2,3$ to illustrate the predicted convergence rates. The examples were performed in the open source finite element library NGSolve\footnote{www.ngsolve.org} \cite{schoeberl1997netgen,schoeberl2014ngsolve}, where the Regge finite elements are available for arbitrary polynomial order. We construct an optimal-order interpolant $g_h$ of a given metric tensor $g$ as follows. On each element $T$, the local $L^2$ best-approximation $\bar{g}_h|_T$ of $g|_T$ is computed. Then the tangential-tangential degrees of freedom shared by two or more neighboring elements are averaged to obtain a globally tangential-tangential continuous interpolant $g_h$.  We verify in Appendix~\ref{appendix:optimalorder} that this interpolant is an optimal-order interpolant in the sense of Remark~\ref{remark:optimalorder_general} on shape-regular, quasi-uniform triangulations.

To compute the $H^{-2}(\Omega)$-norm of the error $f:= (R\omega)_{\rm dist}(g_h)-(R\omega)(g)$ we make use of the fact that $\|f\|_{H^{-2}(\Omega)}$ is equivalent to $\|u\|_{H^2(\Omega)}$, where $u\in H^2_0(\Omega)$ solves the biharmonic equation $\Delta^2 u = f$. This equation will be solved numerically using the (Euclidean) Hellan--Herrmann--Johnson method. To prevent the discretization error from spoiling the real error, we use for $u_h$ two polynomial orders more than for $g_h$.

We consider in dimension $N=2$ the numerical example proposed in \cite{gawlik2020high}, where on the square $\Omega=(-1,1)^2$ the smooth Riemannian metric tensor
\begin{align*}
g(x,y):= \begin{pmatrix}
1+(\frac{\partial f}{\partial x})^2 & \frac{\partial f}{\partial x}\frac{\partial f}{\partial y} \\ \frac{\partial f}{\partial x}\frac{\partial f}{\partial y} & 1+ (\frac{\partial f}{\partial y})^2\end{pmatrix}
\end{align*}
with $f(x,y):= \frac{1}{2}(x^2+y^2)-\frac{1}{12}(x^4+y^4)$ is defined. This metric corresponds to the surface induced by the embedding $\big(x,y\big)\mapsto \big(x,y,f(x,y)\big)$, and its exact scalar curvature is given by
\begin{align*}
R(g)(x,y) = \frac{162(1-x^2)(1-y^2)}{(9+x^2(x^2-3)^2+y^2(y^2-3)^2)^2}.
\end{align*}

For a three-dimensional example we consider the cube $\Omega=(-1,1)^3$ and the Riemannian metric tensor induced by the embedding $\big(x,y,z\big)\mapsto \big(x,y,z,f(x,y,z)\big)$, where $f(x,y,z):=\frac{1}{2}(x^2+y^2+z^2)-\frac{1}{12}(x^4+y^4+z^4)$. The scalar curvature is 
\[
R(g)(x,y,z) =\frac{18\left( (1-x^2)(1-y^2)(9+q(z)) + (1-y^2)(1-z^2)(9+q(x)) + (1-z^2)(1-x^2)(9+q(y)) \right) }{(9+q(x)+q(y)+q(z))^2},
\]
where $q(x) = x^2 (x^2-3)^2$.

We start with a structured mesh consisting of $2\cdot 2^{2k}$ triangles  and $6\cdot 2^{3k}$ tetrahedra, respectively, in two and three dimensions with $\tilde{h}=\max_T h_T=\sqrt{N}\,2^{1-k}$ (and minimal edge length $2^{1-k}$) for $k=0,1,\dots$. To avoid possible superconvergence due to mesh symmetries, we perturb each component of the inner mesh vertices by a random number drawn from a uniform distribution in the range $[-\tilde{h}\,2^{-(2N+1)/2},\tilde{h}\,2^{-(2N+1)/2}]$. As depicted in Figure~\ref{fig:conv_plot} (left) and listed in Table~\ref{tab:error_N2}, linear convergence is observed when $N=2$ and $g_h$ has polynomial degree $r=0$. This is consistent with Theorem~\ref{thm:conv}\ref{thm:conv:part1}. For $r=1$ and $r=2$, higher convergence rates are obtained as expected.

In the three-dimensional case, the same convergence rates as for $N=2$ are obtained, cf. Figure~\ref{fig:conv_plot} (right) and Table~\ref{tab:error_N3}. This indicates that Theorem~\ref{thm:conv}\ref{thm:conv:part2} is sharp for $r\geq 1$. For $r=0$ we observe numerically linear convergence, which is better than predicted by Theorem~\ref{thm:conv}\ref{thm:conv:part2}. However, further investigation suggests that the observed linear convergence for $r=0$ is pre-asymptotic. Indeed, to test if \eqref{angledefectO} is sharp, we compute the $H^{-2}(\Omega)$-norm of the linear functional
\begin{align}
v \mapsto \int_0^1 \mathring{\sum_S} \int_S \left\langle \Theta_S(\gt(t)) \left.\gt(t)\right|_S, \left.\sigma\right|_S \right\rangle_{\gt(t)} v \omega_S(\gt(t))\,dt, \label{critical_term}
\end{align}
where we approximate the parameter integral by a Gauss quadrature of order seven. As depicted in Figure~\ref{fig:conv_plot_crit_term}, the norm of this functional for the optimal-order interpolant $g_h$ with $r=0$ stagnates at about $4\cdot 10^{-4}$, which is below the overall error of $4.296\cdot 10^{-3}$ for the finest grid; cf. Table~\ref{tab:error_N3}. Therefore, the lack of convergence predicted by Theorem~\ref{thm:conv}\ref{thm:conv:part2} is not yet visible in Figure~\ref{fig:conv_plot}. For $r=1,2$ the proven rate of $O(h^{2r})$ for \eqref{critical_term} (see \eqref{angledefectO}) is clearly obtained. Interestingly, using the canonical interpolant appears to increase the convergence rate of \eqref{critical_term} to $O(h^{2(r+1)})$ (i.e. an increase of two orders), as observed in Figure~\ref{fig:conv_plot_crit_term}.  Thus, it appears that the canonical interpolant achieves convergence in the lowest-order case.  We intend to study this superconvergence phenomenon exhibited by the canonical interpolant in future work.

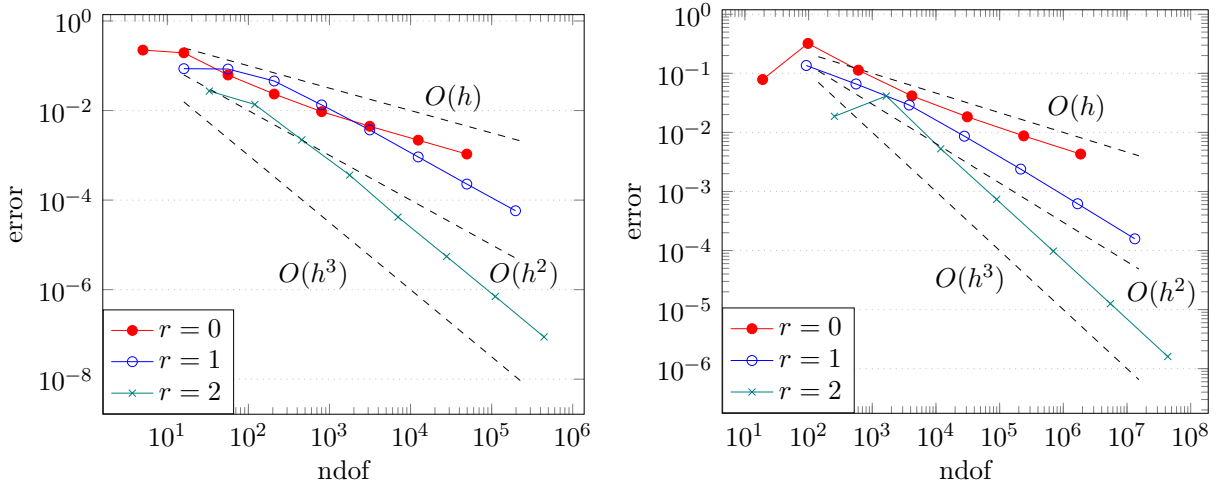
\begin{figure}
	\centering
	\resizebox{0.49\textwidth}{!}{
		\begin{tikzpicture}
		\begin{loglogaxis}[
		legend style={at={(0,0)}, anchor=south west},
		xlabel={ndof},
		ylabel={error},
		ymajorgrids=true,
		grid style=dotted,
		]
		\addlegendentry{$r=0$}
		\addplot[color=red, mark=*] coordinates {
			( 5 ,  0.223745609409278 )
			( 16 ,  0.194462134504585 )
			( 56 ,  0.0621977870431615 )
			( 208 ,  0.0233638487874428 )
			( 800 ,  0.0094339284346834 )
			( 3136 ,  0.0044568730457745 )
			( 12416 ,  0.0021810609037358 )
			( 49408 ,  0.0010666783169232 )
		};
		\addlegendentry{$r=1$}
		\addplot[color=blue, mark=o] coordinates {
			( 16 ,  0.0861267946606119 )
			( 56 ,  0.084479618514948 )
			( 208 ,  0.0456470361522919 )
			( 800 ,  0.0133457830026765 )
			( 3136 ,  0.0036888789020029 )
			( 12416 ,  0.0009204557403272 )
			( 49408 ,  0.0002279974531332 )
			( 197120 ,  5.777307258843193e-05 )
		};
		
		\addlegendentry{$r=2$}
		\addplot[color=teal, mark=x] coordinates {
			( 33 ,  0.0271990483397802 )
			( 120 ,  0.0136406434789509 )
			( 456 ,  0.0022130473620008 )
			( 1776 ,  0.000361512807993 )
			( 7008 ,  4.188545674354635e-05 )
			( 27840 ,  5.503884594546939e-06 )
			( 110976 ,  7.027906861115703e-07 )
			( 443136 ,  8.784421232101793e-08 )
			
		};
	
		\addplot[color=black, mark=none, style=dashed] coordinates {
			( 16, {16^(-1/2)} )
			( 44, {44^(-1/2)} )
			( 220, {220^(-1/2)} )
			( 896, {896^(-1/2)} )
			( 3724, {3724^(-1/2)} )
			( 14624, {14624^(-1/2)} )
			( 57292, {57292^(-1/2)} )
			( 228584, {228584^(-1/2)} )
		};

		\addplot[color=black, mark=none, style=dashed] coordinates {
			( 16, {16^(-2/2)} )
			( 44, {44^(-2/2)} )
			( 220, {220^(-2/2)} )
			( 896, {896^(-2/2)} )
			( 3724, {3724^(-2/2)} )
			( 14624, {14624^(-2/2)} )
			( 57292, {57292^(-2/2)} )
			( 228584, {228584^(-2/2)} )
		};
		\addplot[color=black, mark=none, style=dashed] coordinates {
			( 16, {16^(-3/2)} )
			( 44, {44^(-3/2)} )
			( 220, {220^(-3/2)} )
			( 896, {896^(-3/2)} )
			( 3724, {3724^(-3/2)} )
			( 14624, {14624^(-3/2)} )
			( 57292, {57292^(-3/2)} )
			( 228584, {228584^(-3/2)} )
		};
		
		\end{loglogaxis}
		
		\node (A) at (5, 4.5) [] {$O(h)$};
		\node (B) at (6, 2.) [] {$O(h^2)$};
		\node (C) at (3, 2) [] {$O(h^3)$};
		\end{tikzpicture}}
	\resizebox{0.49\textwidth}{!}{
	\begin{tikzpicture}
	\begin{loglogaxis}[
	legend style={at={(0,0)}, anchor=south west},
	xlabel={ndof},
	ylabel={error},
	ymajorgrids=true,
	grid style=dotted,
	]
	\addlegendentry{$r=0$}
	\addplot[color=red, mark=*] coordinates {
		( 19 ,  0.078689467681022 )
		( 98 ,  0.3215206552300362 )
		( 604 ,  0.1131756821590536 )
		( 4184 ,  0.0415165494783523 )
		( 31024 ,  0.018378173088411 )
		( 238688 ,  0.0087333143327151 )
		( 1872064 ,  0.0042962474180309 )
	};
	\addlegendentry{$r=1$}
	\addplot[color=blue, mark=o] coordinates {
		( 92 ,  0.1358695203463132 )
		( 556 ,  0.0661338606992382 )
		( 3800 ,  0.0291152964208913 )
		( 27952 ,  0.0086328797972605 )
		( 214112 ,  0.0023913193646172 )
		( 1675456 ,  0.0006194122507064 )
		( 13255040 ,  0.000157885729318 )
	};
	
	\addlegendentry{$r=2$}
	\addplot[color=teal, mark=x] coordinates {
		( 255 ,  0.018711639691359 )
		( 1662 ,  0.0413278709438671 )
		( 11892 ,  0.0052855822502922 )
		( 89736 ,  0.0007341956477066 )
		( 696720 ,  9.752865630809082e-05 )
		( 5489952 ,  1.2610044493205028e-05 )
		( 43586112 ,  1.6039007874768194e-06 )
	};
	
	\addplot[color=black, mark=none, style=dashed] coordinates {
		( 142, {142^(-1/3)} )
		( 354, {354^(-1/3)} )
		( 4114, {4114^(-1/3)} )
		( 35698, {35698^(-1/3)} )
		( 245011, {245011^(-1/3)} )
		( 1912096, {1912096^(-1/3)} )
		( 15366973, {15366973^(-1/3)} )
	};
	
	\addplot[color=black, mark=none, style=dashed] coordinates {
		( 142, {3*142^(-2/3)} )
		( 354, {3*354^(-2/3)} )
		( 4114, {3*4114^(-2/3)} )
		( 35698, {3*35698^(-2/3)} )
		( 245011, {3*245011^(-2/3)} )
		( 1912096, {3*1912096^(-2/3)} )
		( 15366973, {3*15366973^(-2/3)} )
	};
	\addplot[color=black, mark=none, style=dashed] coordinates {
		( 142, {10*142^(-3/3)} )
		( 354, {10*354^(-3/3)} )
		( 4114, {10*4114^(-3/3)} )
		( 35698, {10*35698^(-3/3)} )
		( 245011, {10*245011^(-3/3)} )
		( 1912096, {10*1912096^(-3/3)} )
		( 15366973, {10*15366973^(-3/3)} )
	};
	
	\end{loglogaxis}
	
	\node (A) at (5, 4.3) [] {$O(h)$};
	\node (B) at (6.2, 1.7) [] {$O(h^2)$};
	\node (C) at (3.5, 1.9) [] {$O(h^3)$};
	\end{tikzpicture}}
	
	\caption{Convergence of the distributional scalar curvature in the $H^{-2}(\Omega)$-norm for $N=2$ (left) and $N=3$ (right) with respect to the number of degrees of freedom (ndof) of $g_h$ for $r=0,1,2$.}
	\label{fig:conv_plot}
\end{figure}

\begin{figure}
	\centering
	\resizebox{0.49\textwidth}{!}{
		\begin{tikzpicture}
			\begin{loglogaxis}[
				legend style={at={(0,0)}, anchor=south west},
				xlabel={ndof},
				ylabel={$H^{-2}(\Omega)$-norm of \eqref{critical_term}},
				ymajorgrids=true,
				grid style=dotted,
				]
				\addlegendentry{$r=0$}
				\addplot[color=red, mark=*] coordinates {
					(  19 ,  0.0333352303853497  )
					(  98 ,  0.0151071647590974  )
					(  604 ,  0.0016606255389163  )
					(  4184 ,  0.0004320600792596  )
					(  31024 ,  0.0002777136715641  )
					(  238688 ,  0.0003064128874271  )
					(  1872064 ,  0.0003318183264289  )
				};
				\addlegendentry{$r=1$}
				\addplot[color=blue, mark=o] coordinates {
					(  92 ,  0.0253751525541026  )
					(  556 ,  0.000919730328497  )
					(  3800 ,  0.0005292047655846  )
					(  27952 ,  0.0001530060324196  )
					(  214112 ,  4.081533978686004e-05  )
					(  1675456 ,  1.048173633509593e-05  )
					(  13255040 ,  2.635676015094756e-06  )
				};
				
				\addlegendentry{$r=2$}
				\addplot[color=teal, mark=x] coordinates {
					(  255 ,  0.0007779423425818  )
					(  1662 ,  0.0008620305429432  )
					(  11892 ,  6.760475031527034e-06  )
					(  89736 ,  2.2108635825020797e-07  )
					(  696720 ,  1.8613374747783105e-08  )
					(  5489952 ,  1.2087696116457698e-09  )

				};
			
				\addlegendentry{$r=0$ c.i.}
				\addplot[color=orange, mark=square, style=dashed, mark options=solid] coordinates {
					(  19 ,  0.0002362591917287  )
					(  98 ,  0.0098556091258469  )
					(  604 ,  0.0021242374115492  )
					(  4184 ,  0.0005069524972355  )
					(  31024 ,  0.000131172995997  )
					(  238688 ,  3.2987363048662945e-05  )
					(  1872064 ,  8.247188370376352e-06  )
				};
			
				\addlegendentry{$r=1$ c.i.}
				\addplot[color=cyan, mark=square*, style=dashed, mark options=solid] coordinates {
					(  92 ,  0.000358229935323  )
					(  556 ,  0.0002899642860931  )
					(  3800 ,  9.023880253405389e-05  )
					(  27952 ,  6.891132744464625e-06  )
					(  214112 ,  4.68078580597443e-07  )
					(  1675456 ,  3.0358400843173734e-08  )
					(  13255040 ,  1.925729420114136e-09  )
				};
				
				\addlegendentry{$r=2$ c.i.}
				\addplot[color=green, mark=diamond*, style=dashed, mark options=solid] coordinates {
					(  1662 ,  9.838993561202204e-05  )
					(  11892 ,  8.086946075405945e-07  )
					(  89736 ,  1.3912756614505817e-08  )
					(  696720 ,  2.4394797955317406e-10  )
					(  5489952 ,  4.019135797521764e-12  )
				};

				\addplot[color=black, mark=none, style=dashed] coordinates {
					( 142, {142^(-2/3)} )
					( 354, {354^(-2/3)} )
					( 4114, {4114^(-2/3)} )
					( 35698, {35698^(-2/3)} )
					( 245011, {245011^(-2/3)} )
					( 1912096, {1912096^(-2/3)} )
					( 15366973, {15366973^(-2/3)} )
				};
				\addplot[color=black, mark=none, style=dashed] coordinates {
					( 142, {20*142^(-4/3)} )
					( 354, {20*354^(-4/3)} )
					( 4114, {20*4114^(-4/3)} )
					( 35698, {20*35698^(-4/3)} )
					( 245011, {20*245011^(-4/3)} )
					( 1912096, {20*1912096^(-4/3)} )
					( 15366973, {20*15366973^(-4/3)} )
				};
			
				\addplot[color=black, mark=none, style=dashed] coordinates {
					( 142, {20*142^(-6/3)} )
					( 354, {20*354^(-6/3)} )
					( 4114, {20*4114^(-6/3)} )
					( 35698, {20*35698^(-6/3)} )
					( 245011, {20*245011^(-6/3)} )
					( 1912096, {20*1912096^(-6/3)} )
				};
				
			\end{loglogaxis}
			
			\node (A) at (4, 4.7) [] {$O(h^2)$};
			\node (B) at (6.2, 2.6) [] {$O(h^4)$};
			\node (C) at (3.4, 1.8) [] {$O(h^6)$};
	\end{tikzpicture}}
	
	\caption{Convergence of \eqref{critical_term} in the $H^{-2}(\Omega)$-norm with respect to number of degrees of freedom (ndof) for an optimal-order interpolant and the canonical interpolant (c.i.) for $r=0,1,2$ in dimension $N=3$.}
	\label{fig:conv_plot_crit_term}
\end{figure}
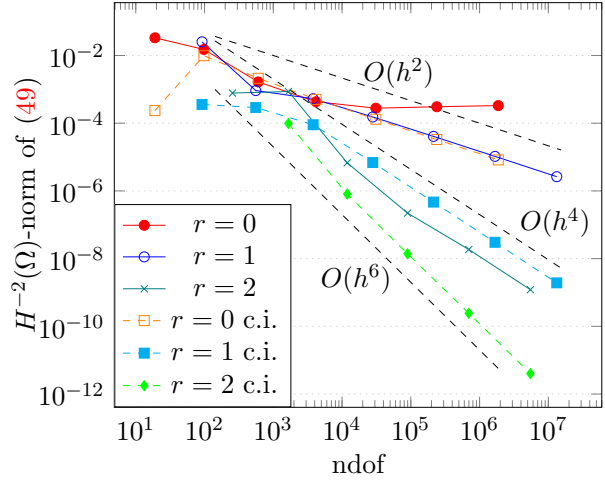

\begin{table}
	\centering
	\begin{tabular}{cccc}
		& $r=0$ & $r=1$ & $r=2$\\
		\hline
		$h$ & \begin{tabular}{@{}ll@{}}
			Error &\hspace{0.2in} Order \\
		\end{tabular} & \begin{tabular}{@{}ll@{}}
			Error &\hspace{0.2in} Order \\
		\end{tabular} & \begin{tabular}{@{}ll@{}}
			Error &\hspace{0.2in} Order \\
		\end{tabular} \\
		\hline
		\begin{tabular}{@{}l@{}}
			$2.828\cdot 10^{-0}$\\
			$1.534\cdot 10^{-0}$\\
			$8.584\cdot 10^{-1}$\\
			$4.609\cdot 10^{-1}$\\
			$2.417\cdot 10^{-1}$\\
			$1.251\cdot 10^{-1}$\\
			$6.260\cdot 10^{-2}$\\
			$3.198\cdot 10^{-2}$
		\end{tabular} & \begin{tabular}{@{}ll@{}}
			$2.237\cdot 10^{-1}$ &  \\
			$1.945\cdot 10^{-1}$ & 0.23\\
			$6.220\cdot 10^{-2}$ & 1.96\\
			$2.336\cdot 10^{-2}$ & 1.57\\
			$9.434\cdot 10^{-3}$ & 1.41\\
			$4.457\cdot 10^{-3}$ & 1.14\\
			$2.181\cdot 10^{-3}$ & 1.03\\
			$1.067\cdot 10^{-3}$ & 1.06
		\end{tabular} &\begin{tabular}{@{}ll@{}}
			$8.613\cdot 10^{-2}$ &  \\
			$8.448\cdot 10^{-2}$ & 0.03\\
			$4.565\cdot 10^{-2}$ & 1.06\\
			$1.335\cdot 10^{-2}$ & 1.98\\
			$3.689\cdot 10^{-3}$ & 1.99\\
			$9.205\cdot 10^{-4}$ & 2.11\\
			$2.280\cdot 10^{-4}$ & 2.02\\
			$5.777\cdot 10^{-5}$ & 2.04
		\end{tabular}&\begin{tabular}{@{}ll@{}}
			$2.720\cdot 10^{-2}$ &  \\
			$1.364\cdot 10^{-2}$ & 1.13\\
			$2.213\cdot 10^{-3}$ & 3.13\\
			$3.615\cdot 10^{-4}$ & 2.91\\
			$4.189\cdot 10^{-5}$ & 3.34\\
			$5.504\cdot 10^{-6}$ & 3.08\\
			$7.028\cdot 10^{-7}$ & 2.97\\
			$8.784\cdot 10^{-8}$ & 3.1
		\end{tabular}
	\end{tabular}
	\caption{Same as Figure~\ref{fig:conv_plot} (left), but in tabular form.}
	\label{tab:error_N2}
\end{table}

\begin{table}
	\centering
	\begin{tabular}{cccc}
		& $r=0$ & $r=1$ & $r=2$\\
		\hline
		$h$ & \begin{tabular}{@{}ll@{}}
			Error &\hspace{0.2in} Order \\
		\end{tabular} & \begin{tabular}{@{}ll@{}}
			Error &\hspace{0.2in} Order \\
		\end{tabular} & \begin{tabular}{@{}ll@{}}
			Error &\hspace{0.2in} Order \\
		\end{tabular} \\
		\hline
		\begin{tabular}{@{}l@{}}
			$3.464\cdot 10^{-0}$\\
			$1.850\cdot 10^{-0}$\\
			$9.709\cdot 10^{-1}$\\
			$4.999\cdot 10^{-1}$\\
			$2.753\cdot 10^{-1}$\\
			$1.358\cdot 10^{-1}$\\
			$6.878\cdot 10^{-2}$
		\end{tabular} &
		\begin{tabular}{@{}ll@{}}
			$7.869\cdot 10^{-2}$ &  \\
			$3.215\cdot 10^{-1}$ & -2.24\\
			$1.132\cdot 10^{-1}$ & 1.62\\
			$4.152\cdot 10^{-2}$ & 1.51\\
			$1.838\cdot 10^{-2}$ & 1.37\\
			$8.733\cdot 10^{-3}$ & 1.05\\
			$4.296\cdot 10^{-3}$ & 1.04
		\end{tabular}
		&\begin{tabular}{@{}ll@{}}
			$1.359\cdot 10^{-1}$ &  \\
			$6.613\cdot 10^{-2}$ & 1.15\\
			$2.912\cdot 10^{-2}$ & 1.27\\
			$8.633\cdot 10^{-3}$ & 1.83\\
			$2.391\cdot 10^{-3}$ & 2.15\\
			$6.194\cdot 10^{-4}$ & 1.91\\
			$1.579\cdot 10^{-4}$ & 2.01
		\end{tabular}
		&\begin{tabular}{@{}ll@{}}
			$1.871\cdot 10^{-2}$ &  \\
			$4.133\cdot 10^{-2}$ & -1.26\\
			$5.286\cdot 10^{-3}$ & 3.19\\
			$7.342\cdot 10^{-4}$ & 2.97\\
			$9.753\cdot 10^{-5}$ & 3.38\\
			$1.261\cdot 10^{-5}$ & 2.89\\
			$1.604\cdot 10^{-6}$ & 3.03
		\end{tabular}
	\end{tabular}
	\caption{Same as Figure~\ref{fig:conv_plot} (right), but in tabular form.}
	\label{tab:error_N3}
\end{table}

\section*{Acknowledgments}
We thank Yasha Berchenko-Kogan for many helpful discussions, especially about the mean curvature term in Definition~\ref{def:distcurv}.  We also thank Snorre Christiansen for pointing out the link with the Israel formalism mentioned in Remark~\ref{remark:israel}.   EG was supported by NSF grant DMS-2012427. MN acknowledges support by the Austrian Science Fund (FWF) project F\,65.

\appendix

\section{Optimal-order interpolation via averaging} \label{appendix:optimalorder}

Below we verify that the interpolant described in Section~\ref{sec:numerical} is an optimal-order interpolant in the sense of Remark~\ref{remark:optimalorder_general}, assuming that $\{\mathcal{T}_h\}_{h>0}$ is shape-regular and quasi-uniform.  Recall that quasi-uniformity means that $\max_{T\in \mathcal{T}_h^N} h/h_T$ is bounded above by a constant independent of $h$.  In what follows, the letter $C$ may depend on this constant as well as on the parameters $N$, $h_T/\rho_T$, $r$, $s$, and $t$ appearing below.

Let $\ell^{(1)},\ell^{(2)},\dots,\ell^{(M)}$ denote the canonical degrees of freedom for the Regge finite element space of degree $r \ge 0$ on $\mathcal{T}_h$~\cite[Equation (2.4b)]{li2018regge}.  Each linear functional $\ell^{(i)}$ is associated with a simplex $D \in \mathcal{T}_h^k$ of dimension $k \ge 1$ in the following sense: $\ell^{(i)}$ sends a symmetric $(0,2)$-tensor field $g$ to the integral of $\left.g\right|_D$ against a (symmetric tensor-valued) polynomial of degree $\le r-k+1$ over $D$.

We enumerate these degrees of freedom with a local numbering system as follows.  On a given $N$-simplex $T \in \mathcal{T}_h^N$,  the degrees of freedom associated with subsimplices of $T$ are denoted $\ell_1^T,\ell_2^T,\dots,\ell_{M_T}^T$.  If $T,T' \in \mathcal{T}_h^N$ are two $N$-simplices with nonempty intersection, then it may happen that $\ell_i^T$ and $\ell_j^{T'}$ coincide for some and $i$ and $j$.  We let $\mathcal{S}(i,T)$ denote the set of all pairs $(j,T')$ for which $\ell_i^T$ and $\ell_j^{T'}$ coincide.

With the above local numbering system, let $\psi_1^T, \psi_2^T, \dots, \psi_{M_T}^T$ denote the basis for the degree-$r$ Regge finite element space that is dual to the above degrees of freedom.  That is,
\[
\ell_i^T(\psi_j^{T'}) = 
\begin{cases} 
	1, &\mbox { if } (j,T') \in \mathcal{S}(i,T), \\
	0, &\mbox{ otherwise. }
\end{cases}
\]
Let us assume that the degrees of freedom and basis functions above are first defined on a reference simplex and then transported to $T$ via an affine transformation.  A scaling argument shows that~\cite[Lemma 2.11]{li2018regge}
\begin{equation} \label{scaling1}
\|\psi_i^T\|_{L^p(T)} \le C h_T^{N/p-2}
\end{equation}
and
\begin{equation} \label{scaling2}
|\ell_i^T(g)| \le C h_T^{-N/p+2} \|g\|_{L^p(T)}
\end{equation}
for all $g$ in the domain of $\ell_i^T$.  Note that the $-2$ and the $+2$ appearing in the exponents above arise because of the way that pullbacks of $(0,2)$-tensor fields behave under affine transformations; see~\cite[Lemma 2.11]{li2018regge}.

Let $g$ be a symmetric $(0,2)$-tensor field possessing $W^{s,p}(\Omega)$-regularity for every $p \in [1,\infty]$ and every $s > (N-1)/p$.  
The canonical interpolation operator $\mathcal{J}_h$ onto the Regge finite element space is defined elementwise by
\[
\left. \mathcal{J}_h g \right|_T = \mathcal{J}_h^T\left( \left.g\right|_T \right)  = \sum_{i=1}^{M_T} \ell_i^T(g) \psi_i^T.
\]

Let $\bar{g}_h$ denote the elementwise $L^2$-projection of $g$ onto the space of discontinuous piecewise polynomial symmetric $(0,2)$-tensor fields of degree at most $r$.  Since $\mathcal{J}_h$ is a projector, we have
\[
\left.\bar{g}_h\right|_T = \mathcal{J}_h^T \left(\left.\bar{g}_h\right|_T \right) = \sum_{i=1}^{M_T} \ell_i^T(\bar{g}_h) \psi_i^T.
\]
The interpolant discussed in Section~\ref{sec:numerical} is defined by
\[
\left.g_h\right|_T = \sum_{i=1}^{M_T} \left( \frac{1}{|\mathcal{S}(i,T)|} \sum_{(j,T') \in \mathcal{S}(i,T)} \ell_j^{T'}(\bar{g}_h) \right) \psi_i^T,
\]
where $|\mathcal{S}(i,T)|$ denotes the cardinality of $\mathcal{S}(i,T)$.

To analyze the error $g_h-g$, let $p \in [1,\infty]$, $s \in ((N-1)/p,r+1]$, and $t \in [0,s]$. We have
\begin{align*}
|g_h-g|_{W^{t,p}(T)} 
&\le |g_h-\mathcal{J}_hg|_{W^{t,p}(T)} + |\mathcal{J}_hg-g|_{W^{t,p}(T)}.
\end{align*}
The second term satisfies~\cite[Theorem 2.5]{li2018regge}
\begin{equation} \label{errorterm1}
 |\mathcal{J}_hg-g|_{W^{t,p}(T)} \le C h_T^{s-t} |g|_{W^{s,p}(T)}.
\end{equation}
To bound the first term, we use the fact that
\[
\ell_i^T(g) = \frac{1}{|\mathcal{S}(i,T)|} \sum_{(j,T') \in \mathcal{S}(i,T)} \ell_j^{T'}(g)
\]
to write
\begin{align*}
\left. ( g_h - \mathcal{J}_hg )\right|_T 
&= \sum_{i=1}^{M_T} \frac{1}{|\mathcal{S}(i,T)|} \sum_{(j,T') \in \mathcal{S}(i,T)} \ell_j^{T'}(\bar{g}_h-g) \psi_i^T.
\end{align*}
Using an inverse estimate,~(\ref{scaling1}),~(\ref{scaling2}), and a standard error estimate~\cite[Proposition 1.135]{ern2004theory} for the elementwise $L^2$-projector, we obtain
\begin{align}
|g_h - \mathcal{J}_h g|_{W^{t,p}(T)} 
&\le C h_T^{-t} \|g_h - \mathcal{J}_h g\|_{L^p(T)} \nonumber \\
&\le C h_T^{-t} \sum_{T' : T' \cap T \neq \emptyset} h_{T'}^{-N/p+2} \|\bar{g}_h-g\|_{L^p(T')} h_T^{N/p-2} \nonumber \\
&\le C h_T^{-t}  \sum_{T' : T' \cap T \neq \emptyset} \|\bar{g}_h-g\|_{L^p(T')} \nonumber \\
&\le C h_T^{-t} \sum_{T' : T' \cap T \neq \emptyset} h_{T'}^s |g|_{W^{s,p}(T')} \nonumber \\
&\le C h_T^{s-t} \sum_{T' : T' \cap T \neq \emptyset} |g|_{W^{s,p}(T')}. \label{errorterm2}
\end{align}
Here, we have repeatedly used the fact that the ratio $h_T/h_{T'}$ is bounded uniformly above and below by positive constants.  Combining~(\ref{errorterm1}) and~(\ref{errorterm2}) shows that the error $g_h-g$ satisfies~(\ref{optimalorder_general}).

\printbibliography

\end{document}